\documentclass[11pt,a4paper]{article}
\usepackage[T1]{fontenc}
\usepackage[utf8]{inputenc}
\usepackage{lmodern,microtype}
\usepackage[a4paper,margin=27mm]{geometry}
\usepackage{amsmath,amssymb,amsthm,mathtools,bm}
\usepackage{xcolor}
\usepackage{graphicx,booktabs,array,enumitem}\usepackage[section]{placeins}\usepackage{xurl}
\usepackage{tikz}
\usetikzlibrary{arrows.meta,positioning,decorations.pathreplacing,calc}
\usepackage{authblk}
\usepackage[numbers,sort&compress]{natbib}
\usepackage[hidelinks,bookmarksnumbered]{hyperref}
\usepackage{doi}
\usepackage{fancyhdr}
\allowdisplaybreaks[2]
\numberwithin{equation}{section}
\newtheorem{maintheorem}{Theorem}
\newtheorem{theorem}{Theorem}[section]
\newtheorem{lemma}[theorem]{Lemma}
\newtheorem{proposition}[theorem]{Proposition}
\newtheorem{corollary}[theorem]{Corollary}

\theoremstyle{definition}
\newtheorem{definition}[theorem]{Definition}
\newtheorem{hypothesis}[theorem]{Hypothesis}

\theoremstyle{remark}\newtheorem{remark}[theorem]{Remark}
\newcommand{\R}{\mathbb R}\newcommand{\Z}{\mathbb Z}\newcommand{\Sone}{\mathbb S^1}
\newcommand{\eps}{\varepsilon}\newcommand{\dd}{\mathrm d}\newcommand{\id}{\operatorname{Id}}
\newcommand{\wind}{\operatorname{wind}}\newcommand{\diag}{\operatorname{diag}}
\newcommand{\norm}[1]{\left\lVert#1\right\rVert}\newcommand{\abs}[1]{\left\lvert#1\right\rvert}
\newcommand{\Lip}{\operatorname{Lip}}

\title{Winding number forces universal synchronisation}

\author[1]{Jorge Buescu}
\author[2]{Henrique M. Oliveira\thanks{Corresponding author: \texttt{henrique.m.oliveira@tecnico.ulisboa.pt}.}}
\affil[1]{\small Departamento de Matem\'atica, Faculdade de Ci\^encias, and CEMS.UL--Center for Mathematical Studies, Universidade de Lisboa, Campo Grande, 1749-006 Lisbon, Portugal\\\texttt{jsbuescu@ciencias.ulisboa.pt}; ORCID: 0000-0001-5444-5089.}
\affil[2]{\small Departamento de Matem\'atica and Centro de An\'alise Matem\'atica, Geometria e Sistemas Din\^amicos, Instituto Superior T\'ecnico, Universidade de Lisboa, Av. Rovisco Pais 1, 1049-001 Lisbon, Portugal\\ORCID: 0000-0002-3346-4915.}
\date{}
\hypersetup{pdftitle={Winding number forces universal synchronisation},pdfauthor={Jorge Buescu and Henrique M. Oliveira}}

\begin{document}
\maketitle
\begin{abstract}
We prove that two completely different oscillators synchronise through arbitrarily weak reciprocal impulses. Any two planar oscillators with hyperbolic attracting limit cycles and equal natural periods lock stably one-to-one under translation impulses with generic directions and a suitable range of positive strength ratios, whatever their vector fields, limit cycles and transverse contraction rates. The mechanism is topological: the winding of the asymptotic phase gradient forces phase advances and delays, and the relative winding number of an effective impulse field gives lower bounds on the numbers of attracting and unstable locked states. Equal periods are not essential: locking persists under nonzero mismatches sufficiently small relative to the coupling strength, with persistence and loss governed by a shifted phase balance.

The analysis covers the full state-dependent impulsive dynamics, including transverse stability. Nearly simultaneous emissions may make the return discontinuous; under an explicit regularity condition the global return has a normally attracting Lipschitz invariant circle, with topologically Morse--Smale dynamics for nondegenerate phase balances away from simultaneous emission.
\end{abstract}

\noindent\textbf{Keywords.} Impulsive differential equations; reciprocal coupling; phase locking; asymptotic phase; Poincar\'e return; winding number; Morse--Smale dynamics; structural stability.\\
\textbf{2020 Mathematics Subject Classification.} 34A37, 34C15, 34D06, 37C27, 37E10.

\section{Introduction}\label{sec:intro}

We consider two planar autonomous systems
\[
\dot x_1=F_1(x_1),\qquad
\dot x_2=F_2(x_2),\qquad
x_1,x_2\in\R^2,
\]
each with a hyperbolic attracting limit cycle, and ask when weak impulsive coupling locks them into a common periodic motion. Nothing relates $F_1$ to $F_2$: the cycles may differ in shape, in speed and in rate of transverse attraction, and $\norm{F_1-F_2}$ is not assumed small.

The coupling is reciprocal and triggered by the state. Each oscillator has a transverse section which it crosses once per revolution; when oscillator~1 crosses its section, the state of oscillator~2 jumps by an amount of order $\eps$, and conversely (Figure~\ref{fig:geometry}). Between impulses each oscillator follows its own equation. We do not postulate a phase model: locking is derived from the return map of the full impulsive system on a three-dimensional section. For any two such oscillators with equal natural periods, Theorem~\ref{thm:opencouplings} guarantees stable locking for generic directions of arbitrarily weak reciprocal translation impulses, when the impulse received by oscillator~1 is sufficiently weak relative to that received by oscillator~2.

\begin{figure}[htbp]
	\centering
	\includegraphics[width=.75\linewidth]{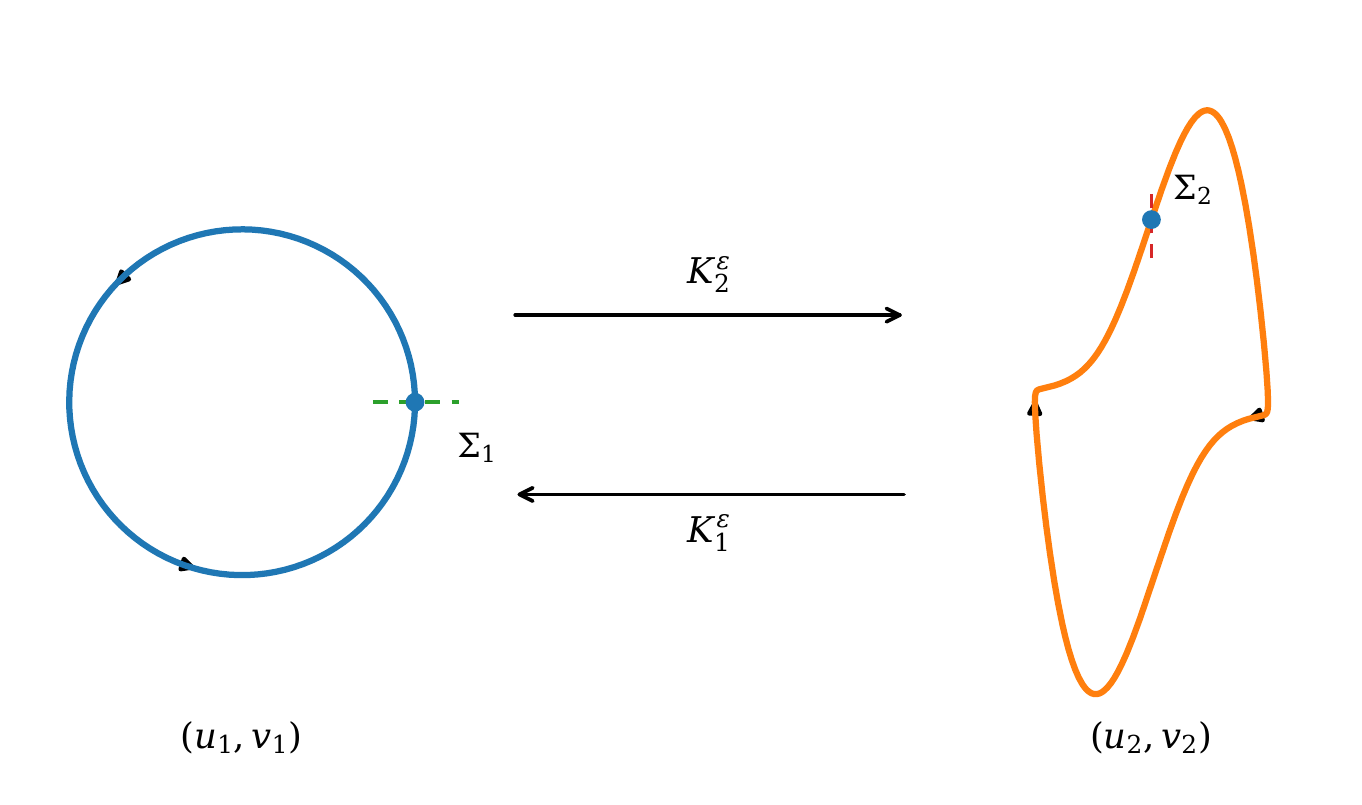}
	\caption{Reciprocal impulsive coupling of two attracting limit cycles $\Gamma_1$ and $\Gamma_2$: when oscillator~$i$ crosses its emission section $\Sigma_i$, the other oscillator $j$ is perturbed by $K_j^\eps$. Left: radial normal form, with $K_1^\eps(u_1,v_1)=(u_1-\beta\eps,v_1)$. Right: Van der Pol oscillator, with $K_2^\eps(u_2,v_2)=(u_2,v_2-\eps)$.}
	\label{fig:geometry}
\end{figure}

Most of the paper assumes equal natural periods, $T_1=T_2=T$. The relative asymptotic phase is then constant between impulses, and one pair of impulses displaces it by $\eps H$ to first order, where $H$ is the difference of the two phase responses. The zeros of $H$ are the candidates for locking. Where the order of the two emissions is fixed, the return map is smooth and the analysis is local. Near simultaneous emission the order can change and an impulse can carry the receiver across its own section, so one must first decide which crossings count as emissions; when the paired-event transition is regular, the global return map is Lipschitz. The global theorem assumes $H(0)\neq0$, which keeps every locked state away from simultaneous emission.

Equality of the periods is a codimension-one condition, and structural stability at fixed $\eps$ does not say how much mismatch is tolerated as $\eps\to0$. We therefore let the periods differ through a second parameter $\delta$, distinct from $\eps$, and then relate the two on the scale $|\delta|=O(\eps)$. The free drift $\Delta$ of the relative phase over one period adds to the impulsive displacement, so that for $\Delta=O(\eps)$ locking is governed by $\Delta+\eps H$.

\subsection{Relation to previous work}

Phase locking by weak interaction goes back to Adler's phase-balance equation \cite{Adler1946}; see \cite{Pikovsky2001,Strogatz2003} for broad accounts. The impulsive point of view comes from the study of clocks. Vassalo-Pereira analysed two interacting Andronov clocks \cite{AndronovVittKhaikin1966} \cite{VassaloPereira1982}, Nunes and Vassalo-Pereira a general interaction law \cite{NunesVassaloPereira1985}, and Abraham, later with Garfinkel, proposed a geometric theory of phase regulation by pulsatile forces for arbitrary oscillators \cite{Abraham1991,AbrahamGarfinkel2003}. We retain the idea common to these works, that short weak impacts act on the phase geometry of a stable oscillator. Their locking conclusions do not carry over: the zero-phase locking found in the early clock model neither matches Huygens' anti-phase observation nor holds in general for the exact impulsive dynamics; see \cite{OliveiraMelo2015}. Mechanical models of Huygens' clocks with specific couplings, and their in-phase and anti-phase regimes, are analysed in \cite{BennettSchatzRockwoodWiesenfeld2002,JovanovicKoshkin2012,KapitaniakEtAl2012}; impact models of clocks, which motivate the state jumps considered here, in \cite{OliveiraMelo2015,OliveiraPerestrelo2025}.

For smoothly coupled oscillators, Chicone \cite{Chicone1994} obtained periodic solutions near an invariant manifold of periodic orbits by Lyapunov--Schmidt reduction and Melnikov-type integrals, and Margheri and Martins \cite{MargheriMartins2010} studied generalised synchronisation of linearly coupled, not necessarily identical, time-periodic systems by invariant-manifold methods. Here the coupling is impulsive, the event times depend on the state, and the two vector fields are unrelated.

For pulse coupling, Mirollo and Strogatz proved almost-everywhere synchronisation of identical integrate-and-fire oscillators \cite{MirolloStrogatz1990}, and Kuramoto gave a phase description of pulse-coupled oscillators and excitable units \cite{Kuramoto1991Pulse}. Weak pulse coupling, phase-resetting methods and comparisons with diffusive coupling are developed in \cite{Izhikevich1999,GoelErmentrout2002,CanavierAchuthan2010,MauroySacreSepulchre2012}. Maps built from phase-resetting curves predict one-to-one locking in heterogeneous pairs, with higher-order corrections \cite{TalathiHwangMiliotisCarneyDitto2009} and with reciprocal coupling and conduction delays \cite{WoodmanCanavier2011}; see \cite{SrinivasCanavier2024} for later developments. Outside the weak-coupling regime, kicked oscillators can behave far less simply \cite{LinYoung2010}.

The phase of a point near an attracting cycle is its asymptotic phase, and the level sets of the phase are the isochrons \cite{Guckenheimer1975}; see \cite{Nakao2016} for phase reduction. Osinga and collaborators have studied their global geometry and computation: continuation of global isochrons \cite{OsingaMoehlis2010}, isochrons of the FitzHugh--Nagumo system \cite{LangfieldKrauskopfOsinga2014}, phase-resetting curves \cite{LangfieldKrauskopfOsinga2020}, and phase-resetting surfaces, phaseless points and critical level sets \cite{LeeBroderickKrauskopfOsinga2025,LeeBroderickKrauskopfOsinga2025Phaseless,LangfieldKrauskopfLeeOsinga2025}. That work describes how a perturbation changes the phase of one oscillator. We use the same geometry to decide when repeated weak impulses lock two distinct oscillators.

An impulse moves the receiver off its cycle, and the next impulse may arrive before the displacement has decayed. Phase and transverse displacement must therefore both be tracked, as in phase--amplitude response functions \cite{CastejonGuillamonHuguet2013}. For impulses at prescribed periodic times, P\'erez-Cervera, M-Seara and Huguet describe the phase dynamics on an invariant curve of the stroboscopic map \cite{PerezCerveraMSearaHuguet2019}, which persists by normal hyperbolicity \cite{HirschPughShub1977}. When impulses are triggered by section crossings the event times depend on the state, and this dependence enters the derivative of the return map \cite{ShirasakaKurebayashiNakao2017}.

We work directly with the Poincar\'e return of the coupled system. The two transverse fixed-point equations are solved in terms of the phase difference, which leaves the scalar equation $H=0$; stability is read from the full three-dimensional return, so that perturbations off the cycles are included. The same impulse can advance the phase at some points of a cycle and delay it at others \cite{Oprisan2013}, and we do not assume that $H$ has exactly two zeros. Instead we compare the rotation of an effective impulse field along the cycle with that of the tangent field: a nonzero relative winding number forces zeros of $H$, and simple zeros alternate between attracting and repelling.

\subsection{Organisation of the paper}

Section~\ref{sec:setting} presents the setting and the main results.
Section~\ref{sec:phasegeometry} introduces the phase geometry and
the Poincar\'e return, and Section~\ref{sec:returnproof} contains
the proofs of the local results.
Section~\ref{sec:global} develops the global analysis, including the master--slave case, while
Section~\ref{sec:nearbyperiods} addresses period mismatch and
structural stability.
Section~\ref{sec:examples} discusses the exceptional case $H(0)=0$,
presents three analytic examples and contains the numerical study,
and Section~\ref{sec:discussion} discusses the results.

\section{Setting and main results}\label{sec:setting}

\subsection{Oscillators, phases and impulses}
For $i=1,2$ let $F_i:D_i\to\R^2$ be a $C^2$ vector field on an open set $D_i\subset\R^2$, with local flow $\phi_t^i$. Between impulses the two oscillators evolve independently,
\begin{equation}\label{eq:fields}
	\dot x_1=F_1(x_1),\qquad \dot x_2=F_2(x_2).
\end{equation}

\begin{hypothesis}[Hyperbolic attracting cycles]\label{hyp:cycles}
	For $i=1,2$, $F_i$ has a periodic orbit $\Gamma_i$ of minimal period $T_i$ whose Floquet multipliers are $1$, along the orbit, and $\rho_i\in(0,1)$, transverse to it \cite{HirschSmaleDevaney2013}.
\end{hypothesis}

\begin{hypothesis}[Equal natural periods]\label{hyp:equalperiods}
	$T_1=T_2=:T$, and $\omega:=2\pi/T$.
\end{hypothesis}

Hypothesis~\ref{hyp:equalperiods} is in force except where a period mismatch is introduced, in Sections~\ref{sec:unequal} and~\ref{sec:nearbyperiods} and in the numerical example of Section~\ref{sec:numerics}; there it holds at $\delta=0$. No closeness of $F_1$ and $F_2$ is assumed. Fix $p_i\in\Gamma_i$, identify $\Sone$ with $\R/(2\pi\Z)$ and parametrise the cycle by phase,
\begin{equation}\label{eq:gammashift}
	\gamma_i(\theta):=\phi_{\theta/\omega}^i(p_i),
	\qquad
	\phi_t^i\bigl(\gamma_i(\theta)\bigr)=\gamma_i(\theta+\omega t).
\end{equation}

\begin{definition}[Asymptotic phase and isochron]\label{def:phase}
	A point $x$ of the basin $\mathcal A(\Gamma_i)=\{x\in D_i:\operatorname{dist}(\phi_t^i(x),\Gamma_i)\to0\text{ as }t\to+\infty\}$ has \emph{asymptotic phase} $\Theta_i(x)=\theta\in\Sone$ if $\|\phi_t^i(x)-\gamma_i(\theta+\omega t)\|\to0$ as $t\to+\infty$. The \emph{isochron} of phase $\theta$ is $\mathcal I_i(\theta):=\{x:\Theta_i(x)=\theta\}$.
\end{definition}

The phase $\Theta_i$ is $C^2$ on an open annular neighbourhood $U_i$ of $\Gamma_i$ whose closure is compact and contained in $\mathcal A(\Gamma_i)$, and it increases at the constant rate $\omega$ along trajectories (Theorem~\ref{thm:phaseclassical} and~\eqref{eq:phaseidentity}). Its gradient on the cycle, $Z_i(\theta):=\nabla\Theta_i(\gamma_i(\theta))$, is $C^1$ and normal to the isochron $\mathcal I_i(\theta)$, not in general to $\Gamma_i$.

\begin{definition}[Emission section]\label{def:emissionsection}
	Oscillator $i$ emits when it crosses an \emph{emission section} $\Sigma_i\subset U_i$: a connected $C^2$ embedded curve segment, transverse to $F_i$, with $p_i=\gamma_i(0)$ in its interior and $\Sigma_i\cap\Gamma_i=\{p_i\}$. Only crossings with a fixed orientation are counted.
\end{definition}

Fix open neighbourhoods $V_i$ of $\Gamma_i$ with $\overline{\Sigma_i}\subset V_i$ and $\overline{V_i}\subset U_i$ compact. The impulse, or \emph{kick}, received by oscillator $i$ is a map $K_i^\eps:V_i\to U_i$, $|\eps|<\eps_0$, such that $(x,\eps)\mapsto K_i^\eps(x)$ is $C^2$, $K_i^0=\id$ and each $K_i^\eps$ is a diffeomorphism onto its image; its first-order field is $B_i(x):=\partial_\eps K_i^\eps(x)|_{\eps=0}$. The subscript names the receiver: when $x_1^-\in\Sigma_1$, $x_2^+=K_2^\eps(x_2^-)$, and when $x_2^-\in\Sigma_2$, $x_1^+=K_1^\eps(x_1^-)$, where $x_i^\mp$ are the states immediately before and after the impulse. The emitter is unchanged at that instant, and between emissions~\eqref{eq:fields} applies. If both oscillators are on their sections at the same time, both impulses act on the pre-impulse states,
\begin{equation}\label{eq:simultaneous}
	(x_1^+,x_2^+)
	=\bigl(K_1^\eps(x_1^-),K_2^\eps(x_2^-)\bigr),
\end{equation}
unambiguously, since the two maps act on different factors. We write $\|f\|_{C^k(K)}$ for the supremum over a compact set $K$ of $f$ and its derivatives up to order $k$; all $O(\cdot)$ and $o(\cdot)$ refer to $\eps\to0$, uniformly on the compact sets indicated.

Theorems~\ref{thm:reciprocallocal}--\ref{thm:opencouplings} concern phase differences at which the two emissions occur in a fixed order. Each impulse is then received away from the receiver's own section, the emissions are exactly the transverse crossings of $\Sigma_1$ and $\Sigma_2$ for small $|\eps|$, and no further condition on the impulse maps is needed. Near simultaneous emission an impulse may carry the receiver across its own section; how emissions are then counted is specified in Section~\ref{sec:reciprocalglobal} and matters only for the global results, Theorem~\ref{thm:globalreciprocal} and its extensions in Section~\ref{sec:nearbyperiods}.

\subsection{The phase balance}
\begin{definition}[First-order phase response]\label{def:response}
	The \emph{first-order phase response} of oscillator $i$ to an impulse received at phase $\theta$ is
	\[
	h_i(\theta)
	:=\dd\Theta_{i,\gamma_i(\theta)}
	\bigl(B_i(\gamma_i(\theta))\bigr)
	=Z_i(\theta)\cdot B_i(\gamma_i(\theta)).
	\]
\end{definition}

Since $B_i$ and $Z_i$ are $C^1$, so is $h_i$, and Taylor's theorem gives, uniformly in $\theta$,
\begin{equation}\label{eq:phasejump}
	\Theta_i\bigl(K_i^\eps(\gamma_i(\theta))\bigr)
	=\theta+\eps h_i(\theta)+O(\eps^2):
\end{equation}
to first order the impulse advances the phase where $h_i>0$ and delays it where $h_i<0$.

The \emph{phase difference} of $(x_1,x_2)\in U_1\times U_2$ is $\psi(x_1,x_2):=\Theta_2(x_2)-\Theta_1(x_1)\pmod{2\pi}$; for equal periods it is constant between impulses, by~\eqref{eq:phaseidentity}. If oscillator~1 emits while both oscillators are on their cycles, oscillator~2 is at phase $\psi$ and receives its impulse there; when it next emits, oscillator~1 is at phase $-\psi$ to zeroth order and receives its impulse there. By~\eqref{eq:phasejump}, one return changes the phase difference by $\eps h_2(\psi)-\eps h_1(-\psi)+O(\eps^2)$, and at $\psi=0$ the simultaneous rule~\eqref{eq:simultaneous} gives the same expression.

\begin{definition}[Leading phase displacement]\label{def:H}
	Let
	\begin{equation}\label{eq:H}
		H(\psi):=h_2(\psi)-h_1(-\psi),
		\qquad \psi\in\Sone.
	\end{equation}
	A zero $\psi_*$ of $H$ is \emph{simple} if $H'(\psi_*)\neq0$; it is then \emph{downward}, or \emph{restoring}, if $H'(\psi_*)<0$, and \emph{upward}, or \emph{destabilising}, if $H'(\psi_*)>0$.
\end{definition}

\begin{remark}[Finiteness of simple zeros]\label{rem:finitezeros}
The terms of Definition~\ref{def:H} are used for the zeros of any $C^1$ function on $\Sone$. Simple zeros are isolated, so a $C^1$ function on the circle all of whose zeros are simple has finitely many.
\end{remark}

The function $H\in C^1(\Sone)$ is only the leading displacement of the phase difference: the return acts on a three-dimensional section and involves both transverse coordinates. Section~\ref{sec:returnproof} eliminates these before the scalar equation $H(\psi)=0$ appears, and Section~\ref{sec:reciprocalglobal} shows that the same $H$ governs the passage through simultaneous emission.

\begin{definition}[Locked state]\label{def:locking}
	A \emph{one-to-one locked state}, or simply \emph{locked state}, is a periodic trajectory of the impulsive system along which each oscillator emits exactly once per period. It is \emph{orbitally asymptotically stable} if every sufficiently close trajectory converges to it up to a translation of time; equivalently, if the corresponding fixed point of the return map on a transverse section is attracting \cite{HirschSmaleDevaney2013}.
\end{definition}

\subsection{Local locking}\label{sec:main}

Hypotheses~\ref{hyp:cycles} and~\ref{hyp:equalperiods} and the impulse law above are standing assumptions in Theorems~\ref{thm:reciprocallocal}--\ref{thm:globalreciprocal}.

\begin{maintheorem}[Reciprocal locking for distinct planar oscillators]\label{thm:reciprocallocal}
	Let $\psi_*\in(0,2\pi)$ be a simple zero of $H$. For every sufficiently small $\eps>0$ there is a locked state within $O(\eps)$ of $\Gamma_1\times\Gamma_2$, locally unique up to translation of time, whose phase difference at the emissions of oscillator~1 is $\psi_\eps=\psi_*+O(\eps)$. It is orbitally asymptotically stable if $\psi_*$ is downward, and unstable in the relative-phase direction if $\psi_*$ is upward.
\end{maintheorem}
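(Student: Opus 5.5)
We construct the Poincaré return of the full impulsive system on the three-dimensional section $\mathcal S:=\Sigma_1\times U_2$, restricted to states near $\Gamma_1\times\Gamma_2$ with phase difference in a compact interval $J\subset(0,2\pi)$ around $\psi_*$. We coordinatise $\mathcal S$ by $(\psi,s_1,s_2)$. Here $\psi=\Theta_2(x_2)-\Theta_1(x_1)$ is the phase difference. The coordinate $s_1$ is a transverse coordinate on $\Sigma_1$ with $s_1=0$ at $p_1$, so $\Theta_1|_{\Sigma_1}$ is a function of $s_1$ with value $0$ at $s_1=0$. The coordinate $s_2$ is an amplitude coordinate of oscillator~2 vanishing on $\Gamma_2$, for instance a tubular coordinate along the isochron through $x_2$.

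Since $\psi_*\neq0$ and $J$ is compact, there is a margin $c>0$ such that, for small $\eps$, the impulse given to oscillator~2 at the emission of~1 lands at phase in $J$, bounded away from $\Sigma_2$. Likewise the impulse given to oscillator~1 when~2 emits lands near phase $-\psi\in-J$, away from $\Sigma_1$. Hence the order of emissions is fixed, each event time is a transverse crossing depending $C^2$ on the state and on $\eps$, and the return $P_\eps$ is $C^2$ in $(\psi,s_1,s_2,\eps)$, with $P_0$ equal to the uncoupled return.

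For $\eps=0$, Hypothesis~\ref{hyp:equalperiods} and~\eqref{eq:phaseidentity} give $P_0(\psi,s_1,s_2)=(\psi,\,g_1(s_1),\,g_2(\psi,s_2))$. Here $g_1$ is the Poincaré map of $\Gamma_1$ with $g_1'(0)=\rho_1$, and $g_2(\psi,\cdot)$ is the time-$T$ map of oscillator~2 in the amplitude coordinate along isochrons, with derivative $\rho_2$ at $s_2=0$. Thus $DP_0$ along $\{s_1=s_2=0\}$ has eigenvalues $1,\rho_1,\rho_2$. For $\eps>0$ we expand
\[
P_\eps(\psi,s_1,s_2)=P_0(\psi,s_1,s_2)+\eps\bigl(H(\psi),\,a_1(\psi),\,a_2(\psi)\bigr)+O(\eps^2)+O(\eps|s|),
\]
where the $\psi$-component comes from~\eqref{eq:phasejump} applied at the two impulse phases $\psi$ and $-\psi$, exactly as in the heuristic preceding Definition~\ref{def:H}. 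We must check two points. First, the event time of oscillator~2's emission shifts by $O(\eps)$ after the kick. Second, the phase jump of oscillator~1 is evaluated at phase $-\psi+O(\eps)$. Both only affect the $O(\eps^2)$ term, because $\Theta_i$ advances uniformly and the phase difference is invariant under the free flow.

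For the fixed-point equation $P_\eps=\mathrm{id}$, the transverse components $s_j=g_j(\cdot)+\eps a_j+\dots$ have derivative $\rho_j-1\neq0$ in $s_j$. The implicit function theorem therefore gives $s_j=\eps\sigma_j(\psi,\eps)$, which is $C^1$ and uniform on $J$. Substituting into the $\psi$-component leaves the scalar equation $\eps\,G(\psi,\eps)=0$ with $G(\psi,\eps)=H(\psi)+O(\eps)$ in $C^1(J)$. Since $H(\psi_*)=0$ and $H'(\psi_*)\neq0$, the implicit function theorem again gives a unique zero $\psi_\eps=\psi_*+O(\eps)$ near $\psi_*$. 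This yields a fixed point within $O(\eps)$ of $\Gamma_1\times\Gamma_2$, which is locally unique. It corresponds to a locked state in the sense of Definition~\ref{def:locking}, and is unique up to time translation because $\mathcal S$ is a section.

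For stability, at the fixed point we have
\[
DP_\eps=\begin{pmatrix}1+\eps H'(\psi_*)+O(\eps^2)&O(\eps)&O(\eps)\\ O(\eps)&\rho_1+O(\eps)&O(\eps)\\ O(\eps)&O(1)&\rho_2+O(\eps)\end{pmatrix}.
\]
The $\psi$-row has off-diagonal entries $O(\eps)$ because $\partial_{s_j}$ of the free $\psi$-component vanishes. The $\psi$-column has off-diagonal entries $O(\eps)$ because $\partial_\psi g_2=0$ at $s_2=0$.

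The main obstacle is to show that the eigenvalue near~$1$ is $1+\eps H'(\psi_*)+o(\eps)$ rather than merely $1+O(\eps)$. The $O(\eps)\cdot O(\eps)$ coupling between the $\psi$-block and the transverse block only contributes at order $\eps^2/(1-\rho_j)$, so it is harmless. We would prove this by a Schur-complement, or block-diagonalising, argument, conjugating by $\mathrm{id}+\eps L$ with $L$ solving a Sylvester equation that is solvable because $1\notin\{\rho_1,\rho_2\}$. The eigenvalue in the $\psi$-direction is then $1+\eps H'(\psi_*)+O(\eps^2)$, and the other two are within $O(\eps)$ of $\rho_1,\rho_2<1$. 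If $H'(\psi_*)<0$, all three eigenvalues lie inside the unit circle for small $\eps>0$, so the locked state is orbitally asymptotically stable. If $H'(\psi_*)>0$, there is one eigenvalue greater than~$1$ with eigenvector $O(\eps)$-close to $\partial_\psi$, so the state is unstable in the relative-phase direction.

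The point needing the most care is the $O(\eps|s|)$ and event-time bookkeeping in the $\psi$-component. This bookkeeping is what ensures that the $(1,1)$ entry of $DP_\eps$ is $\eps H'(\psi_*)$ up to $O(\eps^2)$, and this holds because $\sigma_j$ is $C^1$ in $\psi$ with bounded derivative.
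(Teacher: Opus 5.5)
Your proposal is correct and follows essentially the paper's route: the fixed-order return on $\Sigma_1\times V_2$ in coordinates $(\psi,a_1,a_2)$, and exact invariance of $\psi$ under free flight (Lemma~\ref{lem:phaseidentity}), which makes the off-diagonal entries of the $\psi$-row of $D\mathcal P_\eps$ of order $O(\eps)$. The transverse equations are then eliminated by the implicit function theorem using $\rho_i\neq1$, leaving the scalar equation $H(\psi)+r_\eps(\psi)=0$, and $\lambda_{\rm rel}$ comes from a Schur-complement computation. One correction: with only $C^2$ data $H$ is merely $C^1$, so the $(1,1)$ entry is $1+\eps H'(\psi_*)+o(\eps)$, not $+O(\eps^2)$; what controls it is $\partial_\psi\mathcal E=o(\eps)$ on $\|a\|\le C_0\eps$ (continuity of the mixed $\eps\psi$-derivative of the phase component, as in Corollary~\ref{cor:remainder}), not the bound on $\partial_\psi\sigma_j$, and this does not affect any conclusion.
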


\begin{corollary}[Multipliers, period and persistence]\label{cor:period}
	In the setting of Theorem~\ref{thm:reciprocallocal}, the fixed point of the return map corresponding to the locked state is hyperbolic, with multipliers
	\[
	\lambda_{\rm rel}=1+\eps H'(\psi_*)+o(\eps)
	\quad\text{in the relative-phase direction},
	\qquad
	\lambda_i=\rho_i+o(1)\quad(i=1,2).
	\]
	The locked state has period
	\begin{equation}\label{eq:period}
		\mathcal T_\eps
		=T-\frac{\eps}{\omega}h_1(-\psi_*)+O(\eps^2)
		=T-\frac{\eps}{\omega}h_2(\psi_*)+O(\eps^2),
	\end{equation}
	and, for each fixed sufficiently small $\eps>0$, it persists, together with its stability type and, since $\psi_*\neq0$, the order of the two emissions, under perturbations of the vector fields, emission sections and impulse maps that are sufficiently small in terms of $\eps$.
\end{corollary}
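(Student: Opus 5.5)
The plan is to work on a three-dimensional Poincaré section $\mathcal S=\Sigma_1\times U_2$ at the emissions of oscillator~1, with coordinates $(\psi,s_1,s_2)$. Here $s_1$ is the transverse coordinate of $x_1$ along $\Sigma_1$, $s_2$ is a transverse coordinate of $x_2$ measured along its isochron $\mathcal I_2(\Theta_2(x_2))$ (for instance signed arclength from $\Gamma_2$), and $\psi$ is the phase difference. I would reuse the return map $P_\eps$ constructed in the proof of Theorem~\ref{thm:reciprocallocal}. Because $\psi_*\neq0$, the two emissions occur in a fixed order near $\psi_*$, so $P_\eps$ is $C^2$ jointly in $(\psi,s_1,s_2,\eps)$. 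At $\eps=0$ it is $(\psi,s_1,s_2)\mapsto(\psi,\,\tilde\rho_1(s_1),\,\tilde\rho_2(s_2))$, with derivatives $\rho_1,\rho_2$ at $0$, since the phase difference is constant for equal periods. Hence
\[
DP_0(\psi,0,0)=\diag(1,\rho_1,\rho_2).
\]
The first-order term in the $\psi$-component is $\eps H(\psi)$. This follows from~\eqref{eq:phasejump} applied at the two kicks, with $\Theta_i$ constant-rate between kicks, and from the transverse displacements being $O(\eps)$; the latter affect phase only through $\Theta_i$, which already accounts for them exactly.

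\textbf{Multipliers.} I write $DP_\eps$ at the fixed point $q_\eps=(\psi_\eps,s_1^\eps,s_2^\eps)$, where $s_i^\eps=O(\eps)$. Continuity in $\eps$ gives $DP_\eps(q_\eps)=\diag(1,\rho_1,\rho_2)+O(\eps)$. The eigenvalues $\lambda_1,\lambda_2$ near $\rho_1,\rho_2$ are then $\rho_i+O(\eps)$; if $\rho_1=\rho_2$, continuity of the spectrum still gives $\rho_i+o(1)$. For the eigenvalue near $1$ I would use a block reduction. Write
\[
DP_\eps=\begin{pmatrix}1+\eps a&\eps b^T\\ \eps c&R+O(\eps)\end{pmatrix},
\]
with $a=\partial_\psi(\text{first component})/\eps$ at $q_\eps$. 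The Schur complement, or equivalently the invariant-manifold graph, gives $\lambda_{\rm rel}=1+\eps a+O(\eps^2)$. Since the $\psi$-component is $\psi+\eps H(\psi)+\eps^2 G(\psi,s,\eps)$ plus terms in $s$ of order $\eps|s|$, we get $a=H'(\psi_\eps)+O(\eps)+O(|s^\eps|)$, and hence $a=H'(\psi_*)+o(1)$. The main obstacle here is controlling the mixed derivative $\partial_\psi$ of the $O(\eps^2)$ remainder uniformly. I would get it from $C^2$ regularity of $P$ in $(\psi,s,\eps)$ on compact sets together with Hadamard's lemma: the remainder is $\eps^2\int_0^1(1-t)\partial_\eps^2P\,dt$, which is $C^1$. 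Hyperbolicity follows because $\rho_i<1$ and $H'(\psi_*)\neq0$.

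\textbf{Period.} The period is the time between consecutive emissions of oscillator~1 along the locked orbit. Oscillator~1 receives a single kick, at phase $-\psi_\eps+O(\eps)$. Its asymptotic phase then advances by $\eps h_1(-\psi_*)+O(\eps^2)$ and otherwise increases at rate $\omega$. Returning to $\Sigma_1$ requires a total phase gain of $2\pi+O(\eps^2)$; the $O(\eps^2)$ term appears because $\Sigma_1$ is not the isochron and $s_1^\eps=O(\eps)$, but the starting and ending points coincide, so this correction in fact cancels. This gives $\omega\mathcal T_\eps+\eps h_1(-\psi_*)=2\pi+O(\eps^2)$, which is the first formula. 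The same count for oscillator~2, which has the same period on a locked state, gives the second. The equality of the two is consistent with $H(\psi_*)=0$.

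\textbf{Persistence.} For fixed $\eps$, $q_\eps$ is a hyperbolic fixed point of a $C^1$ map defined on an $\eps$-independent neighbourhood. $C^1$-small perturbations of $F_i,\Sigma_i,K_i^\eps$ produce $C^1$-small perturbations of $P_\eps$ near $q_\eps$, since emissions stay transverse and ordered. The implicit function theorem applied to $P-\id$ then yields a nearby hyperbolic fixed point with the same stability type. The required smallness is measured against the spectral gap $|\eps H'(\psi_*)|$, hence "small in terms of $\eps$". The emission order persists because $\psi$ stays near $\psi_*\neq0$.
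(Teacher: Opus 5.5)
Your proposal follows the paper's route. The multipliers come from the block form of the return derivative and a Schur-complement estimate for $\lambda_{\rm rel}$. The period comes from the exact $2\pi$ phase count for each oscillator over one period of the locked orbit. Persistence comes from continuation of a hyperbolic fixed point.

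The step you single out as the main obstacle is, however, justified incorrectly. The return is only $C^2$. Bounding the $\psi$-derivative of the second-order remainder $\eps^2\int_0^1(1-t)\,\partial_\eps^2P\,\dd t$ by $O(\eps^2)$ would require differentiating under the integral, that is, a third derivative of $P$; in general this derivative is only $o(\eps)$. Likewise $G$ is only $C^1$, so $\partial_\psi[G(\psi,s)-G(\psi,0)]$ is $o(1)$ as $s\to0$, not $O(|s|)$. Your intermediate bound $a=H'(\psi_\eps)+O(\eps)+O(|s^\eps|)$ is therefore not justified.

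Use the first-order form instead. Write $\psi'(\psi,s,\eps)$ for the phase component. The exact phase identity gives $\psi'(\psi,s,0)=\psi$ for all $s$, hence $\psi'-\psi=\eps\int_0^1\partial_\eps\psi'(\psi,s,t\eps)\,\dd t$ with a $C^1$ integrand. Then $\partial_s\psi'=O(\eps)$, and
$a=\int_0^1\partial_\psi\partial_\eps\psi'(\psi_\eps,s^\eps,t\eps)\,\dd t\to\partial_\psi\partial_\eps\psi'(\psi_*,0,0)=H'(\psi_*)$.
This gives $a=H'(\psi_*)+o(1)$, which is exactly what the statement asserts and what the paper obtains from~\eqref{eq:Epsiderivative}.

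Two smaller points. First, arclength along isochrons is in general only a $C^1$ coordinate, so it would spoil the $C^2$ regularity you rely on; use a $C^2$ transverse coordinate such as the paper's $r_2$. Second, off the circle $P_0$ is not $(\psi,\tilde\rho_1(s_1),\tilde\rho_2(s_2))$, because the second transverse component depends on the return time and hence on $s_1$ and $\psi$. This is harmless, since only $DP_0=\diag(1,\rho_1,\rho_2)$ on the circle is used.
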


The two expressions in~\eqref{eq:period} agree because $H(\psi_*)=0$. If $H$ has exactly two zeros, both simple and neither at $\psi=0$, one is downward and the other upward: one locked state is attracting and the other unstable.

\subsection{Locking forced by winding}
Theorem~\ref{thm:reciprocallocal} requires a zero of $H$. A winding number argument forces such zeros to exist.

\begin{definition}[Winding number along a limit cycle]\label{def:winding}
	The \emph{winding number} along $\Gamma_i$ of a continuous nonvanishing vector field $V$ on $\Gamma_i$ is
	\[
	\wind_{\Gamma_i}(V)
	:=\deg\left(
	\theta\longmapsto
	\frac{V(\gamma_i(\theta))}{\norm{V(\gamma_i(\theta))}}
	\right),
	\]
	the topological degree of a map $\Sone\to\Sone$ \cite{Lee2013}; that is, the total change of a continuous argument of $V$ around the cycle, divided by $2\pi$.
\end{definition}

Since $\Gamma_i$ is a simple closed curve, its tangent field has $\wind_{\Gamma_i}(F_i)=\pm1$, according to orientation, by the Umlaufsatz \cite{Hopf1935}. To express the phase balance in the plane of oscillator~2 we define, along $\Gamma_2$, the \emph{effective field}
\begin{equation}\label{eq:effectivefield}
	W(\gamma_2(\psi))
	:=B_2(\gamma_2(\psi))
	-\frac{h_1(-\psi)}{\omega}
	F_2(\gamma_2(\psi)).
\end{equation}
Since $\dd\Theta_2(F_2)=\omega$ by~\eqref{eq:phasebasic},
\begin{equation}\label{eq:effectiveidentity}
	\dd\Theta_{2,\gamma_2(\psi)}
	\bigl(W(\gamma_2(\psi))\bigr)
	=Z_2(\psi)\cdot W(\gamma_2(\psi))
	=H(\psi).
\end{equation}
By~\eqref{eq:isochrontangent}, $H(\psi)=0$ exactly when $W$ is tangent at $\gamma_2(\psi)$ to the isochron $\mathcal I_2(\psi)$, and the zero is simple exactly when the tangency is nondegenerate (Figure~\ref{fig:theoremBlocal}). When $W$ does not vanish on $\Gamma_2$, its \emph{relative winding number} is
\begin{equation}\label{eq:windingnumber}
	m:=\wind_{\Gamma_2}(F_2)-\wind_{\Gamma_2}(W).
\end{equation}

\begin{figure}[htbp]
	\centering
	\includegraphics[width=.93\linewidth]{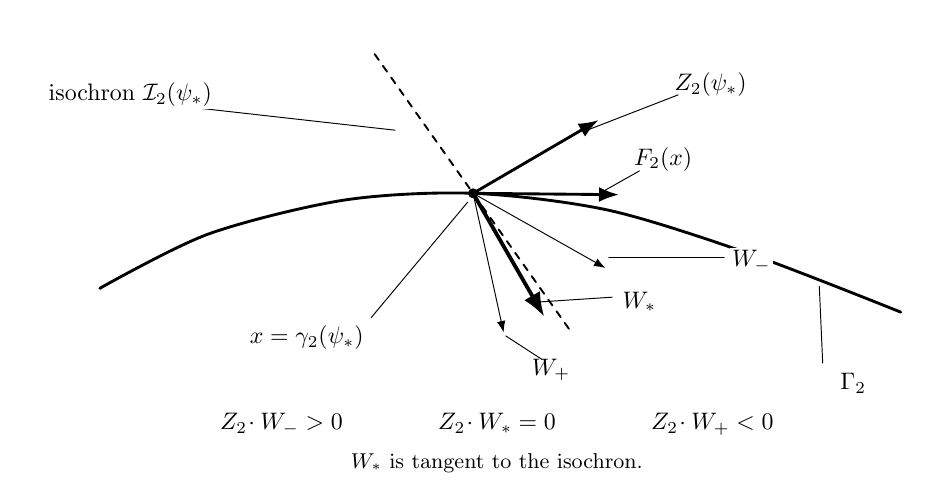}
	\caption{Geometry of Theorem~\ref{thm:reciprocalgeometric} at a zero $\psi_*$ of $H$, $x=\gamma_2(\psi_*)$. Since $H=Z_2\cdot W$, the effective field $W_*$ is tangent to the isochron $\mathcal I_2(\psi_*)$ (dashed), to which $Z_2(\psi_*)$ is normal; $F_2(x)$ is tangent to $\Gamma_2$. At a transverse contact the neighbouring directions $W_-$ and $W_+$ lie on opposite sides of the isochron, and $Z_2\cdot W$ changes sign.}
	\label{fig:theoremBlocal}
\end{figure}

\begin{maintheorem}[Geometric forcing of reciprocal locking]\label{thm:reciprocalgeometric}
	Suppose that $W$ does not vanish on $\Gamma_2$, that $m\neq0$, and that every zero of $H$ is simple. Then $H$ has at least $\abs m$ downward and at least $\abs m$ upward zeros, and each of them in $(0,2\pi)$ gives, for every sufficiently small $\eps>0$, a locked state as in Theorem~\ref{thm:reciprocallocal}. If $H(0)\neq0$, there are therefore at least $\abs m$ attracting locked states and at least $\abs m$ unstable ones.
\end{maintheorem}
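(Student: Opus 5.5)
The argument reduces the whole statement to a degree count for a planar vector field along $\Gamma_2$, measured relative to the frame $(F_2,Z_2)$. Along $\Gamma_2$ the vectors $F_2(\gamma_2(\psi))$ and $Z_2(\psi)$ are linearly independent, because $Z_2\cdot F_2=\omega\neq0$ by~\eqref{eq:phasebasic}. Write
\[
W(\gamma_2(\psi))=a(\psi)F_2(\gamma_2(\psi))+b(\psi)Z_2(\psi)^{\perp},
\]
with $Z_2^\perp$ the rotation of $Z_2$ by $\pi/2$. This vector spans the tangent line to the isochron, by~\eqref{eq:isochrontangent}. Dotting with $Z_2$ and using~\eqref{eq:effectiveidentity} gives $a(\psi)=H(\psi)/\omega$. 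Hence $(a,b)$ are the coordinates of $W$ in a continuous moving frame $(F_2,Z_2^\perp)$. This frame is nondegenerate, since $Z_2^\perp\cdot F_2^{\perp}\neq0$; equivalently $Z_2^\perp$ is never parallel to $F_2$, because $Z_2\cdot F_2\neq0$.

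First I compute the winding of the frame. The map $\psi\mapsto(F_2,Z_2^\perp)$ lies in $GL^\pm(2)$ with constant determinant sign, so along the loop it is homotopic to a rotation by the angle of $F_2$. The winding of a field expressed in this frame is therefore
\[
\wind_{\Gamma_2}(W)=\wind_{\Gamma_2}(F_2)\pm\deg\bigl(\psi\mapsto (a,b)/\lvert(a,b)\rvert\bigr),
\]
where the sign is fixed by the orientation of the frame. Concretely, I would deform $Z_2^\perp$ linearly to $F_2^\perp$ (up to a positive multiple), keeping the frame nondegenerate. This is possible because the component of $Z_2^\perp$ along $F_2^\perp$ has constant sign, which is again $Z_2\cdot F_2=\omega>0$ after rotation. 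The additivity of degrees of products of $\Sone$-valued maps then gives the formula. Consequently $\abs m=\abs{\deg(a,b)}$, and the nonvanishing of $W$ guarantees that $(a,b)$ never vanishes.

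Next comes the counting step. The loop $(a(\psi),b(\psi))$ in $\R^2\setminus\{0\}$ has degree $\pm m$. Its zeros of $a$ are exactly the crossings of the vertical axis, and they are transverse because every zero of $H$ is simple. At each crossing $b\neq0$, so the crossing lies either on the upper or on the lower half-axis. Along the half-axis $\{a=0,b>0\}$ the signed count of crossings equals the degree. A crossing with $a$ decreasing is a downward zero, and one with $a$ increasing is an upward zero. Let $d_\pm$ and $u_\pm$ denote the numbers of downward and upward crossings on the half-axes $b>0$ and $b<0$. Then
\[
d_+-u_+=\pm m,\qquad u_--d_-=\pm m,
\]
with the same sign in both, the second by the same count on the opposite half-axis. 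If the sign is $+$, then $d_+\ge\abs m$ and $u_-\ge\abs m$; if it is $-$, then $u_+\ge\abs m$ and $d_-\ge\abs m$. In either case there are at least $\abs m$ downward and at least $\abs m$ upward zeros. Remark~\ref{rem:finitezeros} ensures all these counts are finite.

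Finally, each such zero lying in $(0,2\pi)$ yields a locked state with the stated stability by Theorem~\ref{thm:reciprocallocal}. If $H(0)\neq0$, then all zeros of $H$ lie in $(0,2\pi)$, which gives at least $\abs m$ attracting locked states and at least $\abs m$ unstable ones.

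The main obstacle is the frame step: making sure the orientation sign and the homotopy of $(F_2,Z_2^\perp)$ to the rotation frame are correct, so that $\abs m$ is exactly the degree of $(a,b)$. I would handle it explicitly via the straight-line homotopy $Z_2^\perp\to F_2^\perp/\omega$, checking that the determinant stays positive, up to the orientation choice, throughout.
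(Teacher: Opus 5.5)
Your proof is correct, and it reaches the conclusion by a somewhat different route from the paper's.

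The paper never introduces the frame $(F_2,Z_2^\perp)$. It first shows $\wind_{\Gamma_2}(Z_2)=\wind_{\Gamma_2}(F_2)$ by the straight-line homotopy, using $Z_2\cdot F_2=\omega>0$. It then takes a lift $\alpha$ of the angle from $W$ to $Z_2$, which changes by $2\pi m$ over one turn, and writes $H=\norm{Z_2}\,\norm{W}\cos\alpha$. The intermediate value theorem gives at least $2\abs m$ zeros and both signs of $H$, with no simplicity assumption. Simplicity enters only afterwards: simple zeros of a $C^1$ function on the circle alternate between downward and upward, so each type accounts for half.

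You instead use the non-orthogonal coordinates $(a,b)=(H/\omega,b)$ and compute the frame degree by a homotopy in $\mathrm{GL}^+(2)$. You then replace the alternation argument with signed crossing counts on the two half-axes $\{a=0,\ b>0\}$ and $\{a=0,\ b<0\}$. The orientation sign you flag as the main obstacle is settled directly. Indeed $\det[F_2\mid Z_2^\perp]=Z_2\cdot F_2=\omega>0$, and along your homotopy the determinant is $(1-s)\omega+s\norm{F_2}^2/c>0$. Hence $\wind_{\Gamma_2}(W)=\wind_{\Gamma_2}(F_2)+\deg(a,b)$, that is, $\deg(a,b)=-m$.

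Your route gives a finer localisation. For $m>0$ it yields at least $m$ upward contacts with $b>0$ and at least $m$ downward contacts with $b<0$, and the reverse for $m<0$. So the forced attracting and forced unstable locked states are separated by the direction in which $W=bZ_2^\perp$ points along the isochron at the contact.

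The paper's route is more economical and more general. The frame built from $Z_2$ alone avoids the $\mathrm{GL}^+(2)$ homotopy. Its zero count and sign change also hold for any continuous nonvanishing $W$, without simple zeros. The paper reuses exactly that in Lemma~\ref{lem:translation}, which needs both signs of $h_d$ for every direction $d$, where simplicity is not available. Your half-axis count, by contrast, needs finitely many transverse crossings from the start.
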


\begin{remark}[Balance at simultaneous emission]\label{rem:simultaneous}
	If $H(0)=0$, the leading displacement of the phase difference vanishes exactly where the order of the emissions changes, and the two one-sided return maps near simultaneous emission must be examined separately; see Section~\ref{sec:exceptional}.
\end{remark}

\subsection{Translation impulses}
For constant translations $K_i^\eps(x)=x+\eps d_i$ with $d_i\in\R^2\setminus\{0\}$, the phase responses are $h_i(\theta)=Z_i(\theta)\cdot d_i$. We consider the reciprocal translations with strength ratio $\eta>0$,
\begin{equation}\label{eq:openreciprocal}
	K_1^\eps(x_1)=x_1+\eps\eta d_1,
	\qquad
	K_2^\eps(x_2)=x_2+\eps d_2.
\end{equation}

\begin{maintheorem}[Open sets of synchronising reciprocal translations]\label{thm:opencouplings}
	Fix the phase origins and $d_1\in\R^2\setminus\{0\}$. For every unit vector $d_2$ in an open, dense subset of the unit circle with complement of measure zero, there exist $\eta_0,\eps_0>0$ such that the translations~\eqref{eq:openreciprocal} have an orbitally asymptotically stable locked state for all $0<\eta<\eta_0$ and $0<\eps<\eps_0$. The conclusion persists under small perturbations of $d_1$, $d_2$ and $\eta$.
\end{maintheorem}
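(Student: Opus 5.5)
We reduce to Theorem~\ref{thm:reciprocallocal} by producing a downward simple zero of $H$ in $(0,2\pi)$, robustly in $d_1,d_2,\eta$. For the translations~\eqref{eq:openreciprocal}, $h_2(\psi)=Z_2(\psi)\cdot d_2$ and $h_1(\theta)=\eta\,Z_1(\theta)\cdot d_1$, so
\[
H_\eta(\psi)=Z_2(\psi)\cdot d_2-\eta\,Z_1(-\psi)\cdot d_1 .
\]
At $\eta=0$ this is $H_0(\psi)=Z_2(\psi)\cdot d_2$, which depends only on $d_2$ and not on $d_1$.

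First we choose $d_2$ so that $H_0$ has only simple zeros, none at $\psi=0$, and has at least one downward zero. Since $\dd\Theta_2(F_2)=\omega$, $Z_2$ never vanishes. Write $Z_2(\psi)=r(\psi)(\cos\alpha(\psi),\sin\alpha(\psi))$ with $r>0$ and a $C^1$ argument $\alpha$. With $d_2=(\cos\beta,\sin\beta)$ we get $H_0(\psi)=r(\psi)\cos(\alpha(\psi)-\beta)$. The zeros of $H_0$ are the solutions of $\alpha(\psi)\equiv\beta\pm\pi/2$, and such a zero is simple exactly when $\alpha'(\psi)\neq0$ there. The map $\psi\mapsto\alpha(\psi)\bmod 2\pi$ is $C^1$ from $\Sone$ to $\Sone$, so by Sard's theorem its critical values form a closed null set, which is compact and hence nowhere dense. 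Excluding those $\beta$ with $\beta\pm\pi/2$ a critical value, together with the two values $\beta\equiv\alpha(0)\pm\pi/2$, leaves an open dense set of full measure.

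We also need a zero to exist, and this is where winding enters. By the Umlaufsatz, $Z_2$ has winding number $\pm1$: it stays within angle $\pi/2$ of the normal to $F_2$, since $Z_2\cdot F_2=\omega>0$. Hence $\alpha$ has total increment $\pm2\pi$, so $\alpha$ is onto $\Sone$, and every $\beta$ produces zeros. Equivalently, $W=d_2$ is constant and $m=\wind(F_2)\neq0$, and Theorem~\ref{thm:reciprocalgeometric} applied to $H_0$ gives at least one downward and one upward zero. The point of this degree argument is precisely that it forces a downward zero for every admissible $d_2$, which is why we invoke it rather than counting preimages directly.

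Next we perturb. A downward simple zero $\psi_*\neq0$ of $H_0$ persists by the implicit function theorem as a downward simple zero $\psi_*(\eta,d_1,d_2)$ of $H_\eta$. This holds for $\eta$ small and $(d_1,d_2)$ near the chosen values, because $H$ depends continuously in $C^1$ on these parameters. The zero also stays in a compact subinterval of $(0,2\pi)$. Theorem~\ref{thm:reciprocallocal} then gives an orbitally asymptotically stable locked state for $0<\eps<\eps_0$.

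The main obstacle is uniformity of $\eps_0$ in $\eta\in(0,\eta_0)$ and in the perturbed directions, since Theorem~\ref{thm:reciprocallocal} is stated for a fixed impulse law. Our first approach is to note that its proof, via the implicit function theorem on the return map (Corollary~\ref{cor:period}), depends only on $C^2$ bounds of $K_i^\eps$, a lower bound for $\abs{H'(\psi_*)}$, and the distance of $\psi_*$ from $0$, all uniform on a compact parameter set. For this we treat $(\eta,d_1,d_2)$ as extra parameters in that argument; closing to $\eta\in[0,\eta_0]$ is harmless because $\eta=0$ is a smooth master--slave law. This gives the stated persistence under small perturbations of $d_1$, $d_2$ and $\eta$.
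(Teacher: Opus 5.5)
Your proposal is correct and follows essentially the paper's proof. You choose $d_2$ so that $h_2=Z_2\cdot d_2$ has only simple zeros with $h_2(0)\neq0$; your Sard argument on the argument map $\alpha$ is equivalent to the paper's Sard argument on the projection of $\Lambda^{-1}(0)$. You then use the winding number to obtain a downward zero, continue it in $\eta$ (and in $d_1,d_2$) by the implicit function theorem, and invoke the uniformity of the estimates of Proposition~\ref{prop:returnexp} on a compact parameter set to get a single $\eps_0$.

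One small slip: $Z_2\cdot F_2=\omega>0$ keeps $Z_2$ within angle $\pi/2$ of $F_2$ itself, not of its normal. That is what makes $Z_2$ homotopic to $F_2$ and gives winding number $\pm1$.
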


Both impulses in~\eqref{eq:openreciprocal} are nonzero, so the coupling is reciprocal and not its master--slave limit $\eta=0$, in which oscillator~2 is forced once per period by an unaffected oscillator~1 (Theorem~\ref{thm:strobe}). The proof continues a downward zero of $H$ from that limit, and hyperbolicity makes the conclusion open. Nothing is assumed about the shapes of the cycles or the distance between the two vector fields.

\subsection{Global dynamics}
Theorems~\ref{thm:reciprocallocal}--\ref{thm:opencouplings} are local in the phase difference. A global description must pass through simultaneous emission, where the order of the emissions changes and an impulse may carry the receiver across its own section. Section~\ref{sec:reciprocalglobal} fixes a convention under which each oscillator emits once in such an episode (Definition~\ref{def:singleemission}) and constructs a global return map $\mathcal R_\eps$ on a cylinder around the circle of phase differences. The construction needs the transition through the episode to be \emph{regular} (Definition~\ref{def:pairedregular}), a condition of Lipschitz closeness to the uncoupled transition, which in particular makes it continuous. It holds when each impulse map preserves the emission section of its receiver, and hence, for small $|\eps|$, the two sides of that section (Proposition~\ref{prop:sufficientregularity}), and may fail otherwise (Remark~\ref{rem:backwardkick}).

\begin{maintheorem}[Global reciprocal Morse--Smale locking]\label{thm:globalreciprocal}
	With emissions counted as in Definition~\ref{def:singleemission}, suppose that the paired-event transition is regular (Definition~\ref{def:pairedregular}), that $H(0)\neq0$, and that $H$ has at least one zero, all of them simple. Then, for every sufficiently small $\eps>0$, the return $\mathcal R_\eps$ has an invariant circle $\mathcal C_\eps$, a Lipschitz graph over the circle of phase differences, normally attracting in the metric sense, on which $q_\eps:=\mathcal R_\eps|_{\mathcal C_\eps}$ is topologically Morse--Smale. The periodic orbits of $q_\eps$ are precisely the locked states of Theorem~\ref{thm:reciprocallocal}, one near each zero of $H$, with the stability stated there.
\end{maintheorem}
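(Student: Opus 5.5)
We plan to write the global return $\mathcal R_\eps$ on the cylinder $\Sone\times(-r,r)^2$, with coordinates $(\psi,\xi_1,\xi_2)$ where the $\xi_i$ are transverse coordinates of the two oscillators at the emission section. We aim for the form
\[
\mathcal R_\eps(\psi,\xi)=\bigl(\psi+\eps H(\psi)+\eps\,g_\eps(\psi,\xi),\;A(\psi)\xi+\eps\,G_\eps(\psi,\xi)\bigr),
\]
where $A(\psi)=\diag(\rho_1,\rho_2)$ up to a conjugacy, and $g_\eps,G_\eps$ are Lipschitz with $\Lip(g_\eps)\to0$ in $\psi$ and $g_\eps=O(\eps+|\xi|)$. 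Away from a neighbourhood of $\psi=0$ the emission order is fixed. There this form follows from the smooth computation of Section~\ref{sec:returnproof}: Taylor expansion using~\eqref{eq:phasejump}, together with the fact that the phase difference is constant between impulses. Near $\psi=0$ we invoke regularity of the paired-event transition (Definition~\ref{def:pairedregular}). It says the transition is Lipschitz-close, at order $\eps$, to the uncoupled one, so the same form holds there with Lipschitz rather than $C^1$ error terms. The two descriptions agree on overlaps because the single-emission convention (Definition~\ref{def:singleemission}) makes $\mathcal R_\eps$ a well-defined continuous map.

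\textbf{Invariant circle.} We would apply a Lipschitz graph-transform argument in the spirit of Hadamard and Hirsch--Pugh--Shub. The tangential dynamics is $\eps$-close to the identity with Lipschitz constant $1+O(\eps)$. The normal dynamics contracts at rate $\max\rho_i+O(\eps)<1$. Let $\mathcal G$ be the space of graphs $\xi=\sigma(\psi)$ with $\Lip\sigma\le1$ and $|\sigma|\le C\eps$. The dominated splitting, normal contraction well below the tangential expansion rate $\approx1$, makes graph transform a contraction on $\mathcal G$ in sup norm. Its fixed point is $\mathcal C_\eps$. Normal attraction in the metric sense follows from the cone condition: the distance to $\mathcal C_\eps$ contracts by a factor $\max\rho_i+o(1)$ each iterate. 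The main obstacle is this step near simultaneous emission. There only Lipschitz estimates are available, so we must check that the Lipschitz defect from regularity is small enough to preserve cones and keep $\mathcal G$ invariant. We expect to need regularity constants $o(1)$ relative to the spectral gap $1-\max\rho_i$. We would treat this by a Lipschitz version of the invariant manifold theorem applied directly, not by smoothing.

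\textbf{Dynamics on the circle.} Writing $\mathcal C_\eps$ as the graph of $\sigma_\eps$, the restriction is $q_\eps(\psi)=\psi+\eps H(\psi)+\eps e_\eps(\psi)$. Here $\|e_\eps\|_{C^0}\to0$, since $|\sigma_\eps|=O(\eps)$, and $\Lip(e_\eps)\to0$. The map $q_\eps$ is a homeomorphism for small $\eps$: it is Lipschitz-close to the identity with inverse Lipschitz bounds, and it has degree one.

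\textbf{Counting fixed points.} Since all zeros of $H$ are simple and finitely many (Remark~\ref{rem:finitezeros}), and $H(0)\ne0$, $|H|$ is bounded below outside small intervals $J_k$ around the zeros $\psi_k$. Outside the $J_k$, $q_\eps(\psi)-\psi$ has the sign of $H$, so there are no periodic points there. The lift of $q_\eps$ moves points monotonically in the direction of $\operatorname{sign}H$, so the rotation number is $0$ and every orbit converges to fixed points. Inside $J_k$, away from $0$, Theorem~\ref{thm:reciprocallocal} provides exactly one fixed point, hyperbolic with multiplier $1+\eps H'(\psi_k)+o(\eps)$ by Corollary~\ref{cor:period}. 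Uniqueness also follows because $\psi\mapsto\eps^{-1}(q_\eps(\psi)-\psi)$ is strictly monotone on $J_k$, since $H'$ is bounded away from $0$ there and the Lipschitz error is small. Thus $q_\eps$ has finitely many fixed points, alternately attracting and repelling, and every other orbit runs from a repeller to an adjacent attractor. This is topologically Morse--Smale. Any locked state near $\Gamma_1\times\Gamma_2$ lies on $\mathcal C_\eps$, since $\mathcal C_\eps$ is normally attracting and so contains all bounded invariant sets nearby. It therefore coincides with one of these fixed points, with the stated stability.
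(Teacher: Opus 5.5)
Your architecture is the paper's: a Lipschitz graph transform gives a normally attracting circle, and the induced circle map has the expansion $\psi+\eps H+o(\eps)$. You then use sign control away from the zeros, Theorem~\ref{thm:reciprocallocal} near them, and normal attraction to place every locked state on the circle.

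The gap is in the first step, which is where the paper does most of its work. The continuous global return you want to transform does not exist on the section you chose. Sampling on $\Sigma_1\times V_2$, the return is discontinuous at simultaneous emission, even when the paired-event transition is regular. Put $b_1:=\dd r_1(B_1(p_1))$ and start on the cycles.
For $\psi\to0^-$, oscillator~2 emits just after oscillator~1. So oscillator~1 is kicked just after leaving $\Sigma_1$, flies a whole period, and returns with transverse displacement about $\rho_1\eps b_1$.
For $\psi\to0^+$, oscillator~2 has just emitted and its next emission falls at the end of the return. So oscillator~1 is kicked just before reaching $\Sigma_1$ and returns with about $\eps b_1$.
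The jump $(1-\rho_1)\eps b_1$ is generically nonzero. In Example~B of Section~\ref{sec:analytic}, where both sections are preserved, it is about $(1-\rho_1)\eps f_1\neq0$.

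Neither hypothesis repairs this. The single-emission convention decides which crossings count, not on which side of the sampling instant a kick falls. Definition~\ref{def:pairedregular} concerns $E_\eps:\mathcal S_B\to\mathcal S_{\rm out}$, between sections that bracket the whole episode, so it says nothing about a return whose section cuts through the episode. On your section the graph transform sends continuous graphs to graphs with a jump over the image of $\psi=0$, so there is no space of Lipschitz graphs for it to contract.

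What is missing is the two-chart cylinder of Lemmas~\ref{lem:gluing} and~\ref{lem:returncylinder}. For small $|\psi|$ one samples on an incoming section $\mathcal S_B$ ahead of both emissions, and on $\mathcal S_1$ elsewhere. The two are glued by the $C^1$ map $\sigma_\eps$ on strips where the order of emission is fixed. The B-coordinates are corrected by a collar term $d_\eps=O(\eps)$ in $C^1$, so that identified points get identical coordinates. Only this yields a continuous return satisfying \eqref{eq:globalLipclose}. Since the base coordinate on chart B is not the physical phase difference, one must also check, as in Lemma~\ref{lem:globalphaseexpansion}, that the chart change costs only $O(\eps^2)$ in the angular increment.

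Three smaller points.

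First, near $\psi=0$ regularity gives only $O(\eps)$ Lipschitz closeness. So the phase remainder there has Lipschitz constant $O(\eps)$, not $o(\eps)$, and your claims $\Lip(g_\eps)\to0$ and $\Lip(e_\eps)\to0$ are unjustified there. This is exactly why $H(0)\neq0$ is assumed, and why Lemma~\ref{lem:circle} asks for Lipschitz control only on an interval containing the zeros. Your counting survives because it uses monotonicity only on the $J_k$, away from $0$.

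Second, even on $J_k$, $\Lip(e_\eps)=o(1)$ requires $\Lip\sigma_\eps=o(1)$, not merely $\Lip\sigma_\eps\le1$. The $a$-derivative of the phase component is of order $\eps$, so $\Lip\sigma_\eps$ enters $\Lip(e_\eps)$ at order one. Either work in the class $\Lip\sigma\le C_2|\eps|$, which the graph transform also preserves (Theorem~\ref{thm:globalcircle}), or rely on the local uniqueness of Theorem~\ref{thm:reciprocallocal}, as you also do.

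Third, the step you flag as the main obstacle is routine once \eqref{eq:globalLipclose} holds. The Lipschitz defect is then $O(\eps)$, far below the gap $1-\max\{\rho_1,\rho_2\}$.
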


Topologically Morse--Smale dynamics and metric normal attraction are defined in Section~\ref{sec:global} (Definition~\ref{def:topMS} and the paragraph before Theorem~\ref{thm:globalcircle}). The invariant circle exists without the assumptions on $H$ (Theorem~\ref{thm:globalcircle}).

\subsection{Unequal periods}\label{sec:unequal}
Finally we let the natural periods differ. Let the vector fields, emission sections and impulse maps depend $C^2$ on a second small parameter $\delta$, satisfy Hypotheses~\ref{hyp:cycles} and~\ref{hyp:equalperiods} at $\delta=0$ and the uniform phase hypotheses~\eqref{eq:phaseuniformdelta}--\eqref{eq:huniformdelta} of Section~\ref{sec:nearbyperiods}, and let $T_i^\delta$ be the period of the perturbed cycle of oscillator $i$. Over one natural period of oscillator~1 the free relative phase drifts by
\begin{equation}\label{eq:Deltadelta}
	\Delta_\delta
	:=2\pi\left(\frac{T_1^\delta}{T_2^\delta}-1\right).
\end{equation}
We consider paths $\delta(\eps)=O(\eps)$ along which
\begin{equation}\label{eq:nudrift}
	\Delta_{\delta(\eps)}=\eps\nu+o(\eps),
\end{equation}
and the shifted phase balance $H_\nu:=\nu+H$.

\begin{maintheorem}[Locking with period mismatch of order $\eps$]\label{thm:smallmismatch}
	Every simple zero $\psi_*\in(0,2\pi)$ of $H_\nu$ gives, for every sufficiently small $\eps>0$, a locally unique locked state with phase difference $\psi_\eps=\psi_*+o(1)$ and relative multiplier $\lambda_{\rm rel}=1+\eps H'(\psi_*)+o(\eps)$. It is orbitally asymptotically stable if $\psi_*$ is downward, and unstable in the relative-phase direction if $\psi_*$ is upward.
\end{maintheorem}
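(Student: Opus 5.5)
The plan is to repeat the proof of Theorem~\ref{thm:reciprocallocal}, now with two parameters $(\eps,\delta)$, and then restrict to the path $\delta=\delta(\eps)$. Near $\psi_*\in(0,2\pi)$ the emissions keep a fixed order, so the return map is the smooth composition of flight maps and the two impulses. We work on the three-dimensional section $\Sigma_1\times U_2$ with coordinates $(\psi,u_1,u_2)$: the phase difference at emission of oscillator~1 and the two transverse coordinates. At $\eps=0$, $\delta\ne0$ the return is the product of the free returns. The transverse parts have multipliers $\rho_i^\delta=\rho_i+O(\delta)$. The phase part is the rotation $\psi\mapsto\psi+\Delta_\delta$, because during one period $T_1^\delta$ oscillator~2 advances its phase by $2\pi T_1^\delta/T_2^\delta$ and oscillator~1 by $2\pi$.

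The uniform phase hypotheses \eqref{eq:phaseuniformdelta}--\eqref{eq:huniformdelta} ensure that the asymptotic phases, the isochron geometry and the responses $h_i^\delta$ depend continuously on $\delta$ in $C^1$, with $h_i^\delta\to h_i$. The first-order computation of Section~\ref{sec:returnproof}, carried out with $\delta$ as a parameter, then gives
\[
\psi\mapsto\psi+\Delta_\delta+\eps H^\delta(\psi)+O(\eps^2)+O(\eps\,|u|),
\qquad u_i\mapsto \rho_i^\delta u_i+O(\eps),
\]
uniformly for $\psi$ in a compact neighbourhood $J$ of $\psi_*$ avoiding $0$. Along the path \eqref{eq:nudrift} we have $\Delta_{\delta(\eps)}=\eps\nu+o(\eps)$ and $H^{\delta(\eps)}=H+o(1)$ in $C^1(J)$. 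The phase displacement is therefore $\eps H_\nu(\psi)+o(\eps)$. Note that $H_\nu'=H'$.

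To find the fixed point, we first solve the two transverse equations $u_i=\rho_i^\delta u_i+O(\eps)$ by the implicit function theorem, since $1-\rho_i^\delta$ is bounded away from $0$. This gives $u=u_\eps(\psi)=O(\eps)$, $C^1$ in $\psi$. Substituting, dividing the phase equation by $\eps$ and writing $G_\eps(\psi):=\eps^{-1}(\text{phase displacement})=H_\nu(\psi)+o(1)$, we get $G_\eps\to H_\nu$ in $C^1(J)$. This convergence is the main obstacle. The $o(\eps)$ error must be $o(\eps)$ in $C^1$ in $\psi$, not merely $C^0$, because only that upgrades the simple zero to a unique nearby zero.

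I would obtain the $C^1$ control by differentiating the composition of flight and kick maps in $\psi$. The kicks are $C^2$ in $(x,\eps)$, and the flight times are $C^2$ in state and $\delta$, since crossings are transverse and $C^2$ dependence on $\delta$ is assumed. Hence all second-order remainders are $O(\eps^2+\eps|\delta|+\eps|u|)$ in $C^1$. The term $\Delta_{\delta(\eps)}$ does not depend on $\psi$, so only its value matters, and that is what \eqref{eq:nudrift} provides. We get only $o(\eps)$ rather than $O(\eps^2)$, because $\delta(\eps)$ is merely $O(\eps)$ with an $o(\eps)$ remainder and $H^\delta-H$ is only $o(1)$. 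This is why the theorem states $\psi_\eps=\psi_*+o(1)$. With $C^1$ convergence, a simple zero of $H_\nu$ persists as a unique zero $\psi_\eps$ of $G_\eps$ in a neighbourhood that is fixed as $\eps\to0$. This gives local uniqueness up to translation of time.

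For stability, the derivative of the return at the fixed point is block triangular up to $O(\eps)$ off-diagonal terms. Its transverse multipliers are $\rho_i+o(1)$. Its relative multiplier is $1+\eps G_\eps'(\psi_\eps)+o(\eps)=1+\eps H'(\psi_*)+o(\eps)$. To justify this, I would use the same Schur-complement reduction as in Corollary~\ref{cor:period}: eliminating the transverse block perturbs the phase entry only by $O(\eps)\cdot O(\eps)=O(\eps^2)$. If $\psi_*$ is downward, all multipliers lie inside the unit circle for small $\eps>0$, so the fixed point, and hence the locked state, is orbitally asymptotically stable. If $\psi_*$ is upward, $\lambda_{\rm rel}>1$ and the state is unstable in the relative-phase direction. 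Each oscillator emits once per return, so the fixed point is a one-to-one locked state.
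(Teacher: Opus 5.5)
Your proposal is correct and follows the paper's route. The paper first proves Proposition~\ref{prop:mismatchreturn}: the transverse equations are solved by the implicit function theorem uniformly for $|\delta|\le M\eps$, leaving $0=\Delta_\delta+\eps H+R_{\eps,\delta}$ with $\|R_{\eps,\delta}\|_{C^1}=o(\eps)$. It then divides by $\eps$ along~\eqref{eq:nudrift}, applies the monotonicity/intermediate-value argument, and reads off $\lambda_{\rm rel}$ from the block form~\eqref{eq:blockderivative} via the Schur complement, exactly as you do.

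One small correction. With only $C^2$ data, the $\psi$-derivatives of the remainders are $o(\eps)$, by uniform continuity of the second derivatives as in Corollary~\ref{cor:remainder}. They are not $O(\eps^2+\eps|\delta|+\eps|u|)$, but $o(\eps)$ in $C^1$ is all your argument actually needs.
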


Corollary~\ref{cor:mismatchperiod} gives the locked period, and Theorem~\ref{thm:mismatchglobal} with Corollary~\ref{cor:mismatchMS} is the counterpart of Theorem~\ref{thm:globalreciprocal} for unequal periods.

\section{Phase geometry and the return map}\label{sec:phasegeometry}

\subsection{Regularity of the asymptotic phase}
The local isochrons of a hyperbolic attracting periodic orbit form an invariant foliation, with isochron map as regular as the vector field \cite[Theorem~1]{EngelKuehn2021}; see \cite{ChiconeLiu2004,Dumortier2006} for the planar theory, \cite{Guckenheimer1975} for the classical formulation and \cite{Nakao2016} for phase reduction. For $C^2$ vector fields this gives:

\begin{theorem}[Asymptotic phase]\label{thm:phaseclassical}
For $i=1,2$ there is an open annular neighbourhood $U_i$ of $\Gamma_i$, with $\overline{U_i}$ compact and contained in $\mathcal A(\Gamma_i)$, on which the asymptotic phase $\Theta_i:U_i\to\Sone$ is of class $C^2$, with $\Theta_i(\gamma_i(\theta))=\theta$ and $\dd\Theta_i\neq0$. Each level set $\mathcal I_i(\theta)\cap U_i$ is a $C^2$ embedded curve, and for $v\in\R^2$
	\begin{equation}\label{eq:isochrontangent}
		v\text{ is tangent to }\mathcal I_i(\theta)\text{ at }\gamma_i(\theta)
		\quad\Longleftrightarrow\quad
		\dd\Theta_{i,\gamma_i(\theta)}(v)=0.
	\end{equation}
\end{theorem}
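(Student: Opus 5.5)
We derive the statement from the invariant foliation theorem \cite[Theorem~1]{EngelKuehn2021} rather than building the phase from scratch. By Hypothesis~\ref{hyp:cycles} the nontrivial multiplier $\rho_i\in(0,1)$ makes $\Gamma_i$ a normally hyperbolic attracting invariant manifold of the $C^2$ flow $\phi^i_t$. That theorem then gives a local invariant foliation of a tubular neighbourhood by stable fibres $W^s(\gamma_i(\theta))$, indexed by their base points on the cycle. It also gives a base-point (isochron) map $\pi_i$ to $\Gamma_i$ of class $C^2$, the same regularity as $F_i$. We set $\Theta_i:=\gamma_i^{-1}\circ\pi_i$, where $\gamma_i:\Sone\to\Gamma_i$ is the $C^2$ diffeomorphism~\eqref{eq:gammashift}. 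Invariance of the foliation, $\phi^i_t(W^s(q))\subset W^s(\phi^i_t(q))$, together with exponential contraction along fibres, shows that $\Theta_i$ coincides with the asymptotic phase of Definition~\ref{def:phase}. Consequently $\Theta_i(\phi^i_t(x))=\Theta_i(x)+\omega t$, and $\Theta_i(\gamma_i(\theta))=\theta$ because base points on the cycle are fixed by $\pi_i$.

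Next we choose $U_i$. Take a closed tubular annulus $N_i$ around $\Gamma_i$ on which the local foliation is defined, and let $U_i$ be its interior. Shrinking if necessary, $\overline{U_i}$ is compact and lies in the local stable set. Every point of $\overline{U_i}$ converges to $\Gamma_i$, so $\overline{U_i}\subset\mathcal A(\Gamma_i)$, and $U_i$ is an open annulus. Using the global backward saturation $\bigcup_{t\ge0}\phi^i_{-t}$ is unnecessary, because only a neighbourhood is claimed.

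Nondegeneracy comes from differentiating the conjugacy at $t=0$, which gives $\dd\Theta_i(F_i)=\omega\neq0$. Hence $\dd\Theta_i\neq0$ everywhere on $U_i$, and this also yields~\eqref{eq:phaseidentity}. By the regular value theorem (implicit function theorem for $C^2$ maps into $\Sone$), each level set $\mathcal I_i(\theta)\cap U_i$ is a $C^2$ embedded one-dimensional submanifold. It is a single curve, namely the fibre $W^s(\gamma_i(\theta))\cap U_i$, provided $U_i$ is chosen as a union of connected fibre segments, which we arrange when choosing $N_i$. Finally, at $x=\gamma_i(\theta)$ the tangent line to the level curve is $\ker\dd\Theta_{i,x}$, which is one-dimensional since $\dd\Theta_{i,x}\neq0$. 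This gives~\eqref{eq:isochrontangent}.

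The main obstacle is the precise regularity claim. Standard invariant-fibre theorems typically give fibres as smooth as the field but a foliation (base-point map) only $C^{k-1}$, or $C^k$ under a spectral-gap condition. We rely on the cited result that, for a hyperbolic periodic orbit in the plane, the isochron map is $C^2$ when $F_i$ is $C^2$. The gap condition holds trivially here: the tangential exponent is $0$ and the normal exponent is $\log\rho_i/T<0$, so no resonance obstruction arises. If that citation had to be avoided, a fallback is to define $\Theta_i$ through a $C^2$ Poincaré map $P_i$ on a section. One would linearise $P_i$ by a $C^2$ Koenigs-type conjugacy, available because the one-dimensional contraction has multiplier $\rho_i\in(0,1)$. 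Since $\Theta_i$ restricted to the section is constant exactly along $P_i$-orbits' limit classes, the phase would then be transported along the flow. Verifying that this construction retains $C^2$ smoothness would be the delicate step.
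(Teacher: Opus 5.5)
Your proposal is correct and follows the paper's route. The paper also obtains this statement directly from the invariant-foliation theorem \cite[Theorem~1]{EngelKuehn2021}, with the isochron map as regular as the field; the remaining items you verify ($\Theta_i(\gamma_i(\theta))=\theta$, $\dd\Theta_i(F_i)=\omega\neq0$, the regular-value argument, and~\eqref{eq:isochrontangent}) are routine consequences that the paper leaves implicit.

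One small correction to your optional fallback: $\Theta_i|_{\Sigma_i}$ is not constant along $P_i$-orbits. Instead it satisfies $\Theta_i\circ P_i=\Theta_i+\omega(\tau_i-T)$ (mod $2\pi$), where $\tau_i$ is the return time. So one defines $\Theta_i|_{\Sigma_i}=-\omega\sum_{n\ge0}(\tau_i\circ P_i^n-T)$; its termwise first and second derivatives converge uniformly because the derivatives of $P_i^n$ decay geometrically, so that route is in fact unproblematic in the plane.
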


Compactness of $\overline{U_i}$ serves only to make the estimates uniform where impulses are applied. The local isochron $\mathcal I_i(\theta)\cap U_i$ is the local stable manifold of the fixed point $\gamma_i(\theta)$ of the time-$T$ map $\phi_T^i$, which is the link with the invariant-foliation formulation of \cite{Guckenheimer1975,ChiconeLiu2004,Dumortier2006}.

By Definition~\ref{def:phase}, $\phi_s^i(x)$ has phase $\Theta_i(x)+\omega s$; hence, whenever $\phi_s^i(x)\in U_i$,
\begin{equation}\label{eq:phaseidentity}
	\Theta_i(\phi_s^i(x))
	=\Theta_i(x)+\omega s\pmod{2\pi}.
\end{equation}
Differentiating~\eqref{eq:gammashift} and~\eqref{eq:phaseidentity}, in any local real-valued lift of $\Theta_i$, gives
\begin{equation}\label{eq:phasebasic}
	\gamma_i'(\theta)=\frac{F_i(\gamma_i(\theta))}{\omega},
	\qquad
	\dd\Theta_{i,x}\bigl(F_i(x)\bigr)=\omega,
\end{equation}
and in particular
\begin{equation}\label{eq:normalisation}
	Z_i(\theta)\cdot F_i(\gamma_i(\theta))=\omega>0.
\end{equation}
Since $\Theta_i$ and $\gamma_i$ are $C^2$, $Z_i\in C^1(\Sone;\R^2)$. Differentiating $\dd\Theta_i(F_i)=\omega$ along the orbit shows that $\widehat Z_i(t):=Z_i(\omega t)$ is the periodic solution of the adjoint equation
\[
\dot{\widehat Z}_i(t)
=-DF_i(\gamma_i(\omega t))^{T}\widehat Z_i(t),
\qquad
\widehat Z_i(t)\cdot F_i(\gamma_i(\omega t))=\omega,
\]
the infinitesimal phase-response vector of phase-reduction theory \cite{Nakao2016}; since $DF_i\circ\gamma_i$ is $C^1$, $\widehat Z_i$ is in fact $C^2$.

\subsection{Sections, impulses and hitting times}\label{sec:sectionsimpulses}
A transverse section is locally a regular level set \cite{Lee2013}: every $y\in\Sigma_i$ has a neighbourhood $B\subset U_i$ and a $C^2$ function $g:B\to\R$ with $\Sigma_i\cap B=\{g=0\}$ and $\dd g_y(F_i(y))>0$, the sign being chosen so that recorded crossings go from the \emph{incoming side} $\{g<0\}$ to the \emph{outgoing side} $\{g>0\}$.

By Taylor's theorem on the compact set $\overline{V_i}$, $K_i^\eps(x)=x+\eps B_i(x)+R_i^K(x,\eps)$ with $\|R_i^K(\cdot,\eps)\|_{C^0(\overline{V_i})}=O(\eps^2)$ and $\|R_i^K(\cdot,\eps)\|_{C^1(\overline{V_i})}=o(\eps)$.

We use repeatedly the implicit function theorem in the following form \cite{HirschSmaleDevaney2013}: if a free trajectory starting at $x_0$ crosses a transverse section $\Sigma=\{g=0\}$ at time $\tau_0>0$, the hitting time is a $C^2$ function of the initial state near $x_0$, unique near $\tau_0$, because $\partial_t\,g(\phi_t(x))=\dd g(F)\neq0$ at the crossing.

\subsection{The Poincar\'e return and the exact phase identity}\label{sec:localreturn}
Between impulses the state $(x_1,x_2)$ evolves by the product flow $\Phi_t(x_1,x_2):=\bigl(\phi_t^1(x_1),\phi_t^2(x_2)\bigr)$. We sample it immediately before oscillator~1 emits, on the three-dimensional section $\mathcal S_1:=\Sigma_1\times V_2$. Fix a point of $\mathcal S_1$ from which oscillator~2 emits strictly after oscillator~1 and strictly before the next emission of oscillator~1. By the hitting-time result of Section~\ref{sec:sectionsimpulses} there are a neighbourhood $\mathcal V\subset\mathcal S_1$ of that point and $\eps_1\in(0,\eps_0)$ such that, for $z=(x_1,x_2)\in\mathcal V$ and $|\eps|<\eps_1$, this order of emissions persists, every impulse is applied at a point of the corresponding $V_i$, and the following construction is well defined. Apply the impulse to oscillator~2; flow until oscillator~2 first reaches $\Sigma_2$, at time $\tau_2(z,\eps)>0$; apply the impulse to oscillator~1; flow until oscillator~1 first reaches $\Sigma_1$, at time $\tau_1(z,\eps)>0$:
\begin{equation}\label{eq:Poincare}
	\begin{aligned}
		z^{(1)}&=\bigl(x_1,K_2^\eps(x_2)\bigr),&
		z^{(2)}&=\Phi_{\tau_2(z,\eps)}(z^{(1)}),\\
		z^{(3)}&=\bigl(K_1^\eps(z^{(2)}_1),z^{(2)}_2\bigr),&
		\mathcal P_\eps(z)&:=\Phi_{\tau_1(z,\eps)}(z^{(3)}),
	\end{aligned}
\end{equation}
all crossings being taken with the chosen orientation. Then $\mathcal P_\eps(z)\in\mathcal S_1$, and $\tau_1$, $\tau_2$ and the \emph{local Poincar\'e return} $\mathcal P_\eps$ are $C^2$ in $(z,\eps)$, after shrinking $\mathcal V$ if necessary (Figure~\ref{fig:returnsequence}).

\begin{figure}[htbp]
	\centering
	\begin{tikzpicture}[>=Latex, node distance=2.3cm, every node/.style={font=\small}]
		\node[draw,rounded corners,align=center,minimum width=2.55cm,minimum height=.9cm] (e1a) {oscillator $1$ emits\\$x_1^-\in\Sigma_1$};
		\node[draw,rounded corners,align=center,right=of e1a,minimum width=2.55cm,minimum height=.9cm] (e2) {oscillator $2$ emits\\$x_2^-\in\Sigma_2$};
		\node[draw,rounded corners,align=center,right=of e2,minimum width=2.55cm,minimum height=.9cm] (e1b) {oscillator $1$ emits again\\$x_1^-\in\Sigma_1$};
		\draw[->,thick] (e1a) -- node[above,align=center] {apply $K_2^\eps$\\then free flight} (e2);
		\draw[->,thick] (e2) -- node[above,align=center] {apply $K_1^\eps$\\then free flight} (e1b);
		\draw[decorate,decoration={brace,mirror,amplitude=5pt},thick] ($(e1a.south west)+(0,-.45)$) -- node[below=7pt] {one Poincar\'e return $\mathcal P_\eps$} ($(e1b.south east)+(0,-.45)$);
	\end{tikzpicture}
	\caption{The local Poincar\'e return when the two emissions occur at distinct times. The state is sampled immediately before successive emissions of oscillator~$1$.}
	\label{fig:returnsequence}
\end{figure}

The equal-period analysis rests on the following identity.

\begin{lemma}[Exact phase identity]\label{lem:phaseidentity}
	Let $z=(x_1,x_2)\in\mathcal V$ and let $y_1:=z^{(2)}_1=\phi^1_{\tau_2(z,\eps)}(x_1)$ be the state of oscillator~1 when oscillator~2 emits. Then
	\begin{equation}\label{eq:exactphase}
		\psi\bigl(\mathcal P_\eps(z)\bigr)-\psi(z)
		=\bigl[\Theta_2(K_2^\eps(x_2))-\Theta_2(x_2)\bigr]
		-\bigl[\Theta_1(K_1^\eps(y_1))-\Theta_1(y_1)\bigr]
		\pmod{2\pi}.
	\end{equation}
	In particular $\psi\circ\mathcal P_0=\psi$ on all of $\mathcal V$, not only on the cycles.
\end{lemma}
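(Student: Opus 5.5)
We will prove the identity by following the phases of the two oscillators through the four stages of the construction~\eqref{eq:Poincare}. The only tools are the phase identity~\eqref{eq:phaseidentity}, applied with the common frequency $\omega$ (Hypothesis~\ref{hyp:equalperiods}), and the fact that each impulse changes only the receiver's state. Since all states stay in $V_1\times V_2\subset U_1\times U_2$ on $\mathcal V$, the phases $\Theta_i$ are defined along the whole return. All equalities below are taken modulo $2\pi$.

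We track $\psi=\Theta_2-\Theta_1$ stage by stage.

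\emph{Step 1.} Passing from $z$ to $z^{(1)}$ leaves $x_1$ fixed and replaces $x_2$ by $K_2^\eps(x_2)$. Hence $\psi(z^{(1)})-\psi(z)=\Theta_2(K_2^\eps(x_2))-\Theta_2(x_2)$.

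\emph{Step 2.} Passing from $z^{(1)}$ to $z^{(2)}$ is free flight for time $\tau_2$. By~\eqref{eq:phaseidentity} both phases increase by $\omega\tau_2$, so $\psi(z^{(2)})=\psi(z^{(1)})$. One must check that the whole trajectory segment stays in $U_1\times U_2$. This holds by the choice of $\mathcal V$ and $\eps_1$; if necessary, one uses invariance of the phase along flow lines within the basin, since $\Theta_i$ extends to the basin and the identity holds there.

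\emph{Step 3.} Passing from $z^{(2)}$ to $z^{(3)}$ replaces $y_1=z^{(2)}_1$ by $K_1^\eps(y_1)$ and leaves oscillator~2 fixed. This subtracts $\Theta_1(K_1^\eps(y_1))-\Theta_1(y_1)$ from $\psi$.

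\emph{Step 4.} The final free flight for time $\tau_1$ again preserves $\psi$, by the argument of Step~2.

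Summing the four increments gives~\eqref{eq:exactphase}.

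For the ``in particular'' statement, set $\eps=0$. Then $K_i^0=\id$, both brackets vanish identically, and $\psi\circ\mathcal P_0=\psi$ on all of $\mathcal V$. No use is made of the states lying on the cycles: the argument used only that phases advance at the equal rate $\omega$ under each free flow.

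The only delicate point is in Steps~2 and~4: guaranteeing that~\eqref{eq:phaseidentity} applies along each free flight. The identity is stated only when the flowed point lies in $U_i$. We therefore need that intermediate points remain in $U_i$, or at least in the basin, where the asymptotic phase is still defined by Definition~\ref{def:phase} and advances by exactly $\omega s$. We will use the definition directly: if $\phi_t(x)$ is asymptotic to $\gamma_i(\theta+\omega t)$, then $\phi_{t}(\phi_s(x))$ is asymptotic to $\gamma_i(\theta+\omega s+\omega t)$. This holds for any $s$, so the identity is valid on the whole basin and no containment issue arises.
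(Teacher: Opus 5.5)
Your proposal is correct and follows the paper's proof. In both, free flights preserve $\psi$ by~\eqref{eq:phaseidentity} with the common rate $\omega$, and each impulse changes only its receiver's phase, which contributes the two brackets. Your added remark, that the phase identity follows directly from Definition~\ref{def:phase} for any forward flight time so that no intermediate containment in $U_i$ is needed, settles a point the paper takes for granted and does no harm.
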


\begin{proof}
	By~\eqref{eq:phaseidentity}, a free flight of duration $t$ increases $\Theta_1$ and $\Theta_2$ by $\omega t$, so $\psi$ is constant along the product flow. The return consists of two free flights and two impulses. The impulse on oscillator~2 changes $\Theta_2$ by the first bracket and leaves $\Theta_1$ unchanged; the impulse on oscillator~1 changes $\Theta_1$ by the second bracket and leaves $\Theta_2$ unchanged.
\end{proof}

On $\mathcal S_1$ the product of the two cycles is the circle $\mathcal C_0:=\{(p_1,\gamma_2(\psi)):\psi\in\Sone\}$, on which oscillator~1 is at its emission point and oscillator~2 has phase $\psi$; at $\eps=0$ every point of $\mathcal C_0$ is fixed by the return.

We need $C^2$ coordinates near this circle. Let $J\subset(0,2\pi)$ be a compact interval and $\mathcal C_0(J):=\{(p_1,\gamma_2(\psi)):\psi\in J\}$. After shrinking the annular neighbourhoods if necessary, choose $C^2$ functions $r_i$ with $\Gamma_i=\{r_i=0\}$ and $\dd r_i\neq0$ on $\Gamma_i$. On $\Gamma_i$ the kernel of $\dd r_i$ is spanned by $F_i$, whereas $\dd\Theta_i(F_i)=\omega\neq0$; hence $(\Theta_i,r_i)$ are $C^2$ coordinates near every compact arc of $\Gamma_i$. Since $\Sigma_1$ is transverse to $F_1$ at $p_1$, the restriction of $r_1$ to $\Sigma_1$ is a $C^2$ coordinate near $p_1$. Therefore $\psi=\Theta_2-\Theta_1$, $a_1=r_1(x_1)$ and $a_2=r_2(x_2)$ are $C^2$ coordinates on a neighbourhood of $\mathcal C_0(J)$ in $\mathcal S_1$ \cite{Lee2013}, which for some $r_*>0$ they identify with
\begin{equation}\label{eq:tubecoordinates}
	J\times B_{r_*}(0)\subset\R\times\R^2,
	\qquad
	\mathcal C_0(J)=\{a_1=a_2=0\},
\end{equation}
where $B_{r_*}(0)=\{a\in\R^2:\|a\|<r_*\}$. The coordinates $a_1$ and $a_2$ are the transverse displacements of the two oscillators from their cycles.

\section{The return map and the proofs of Theorems~\ref{thm:reciprocallocal}--\ref{thm:opencouplings}}\label{sec:returnproof}

\subsection{Uniform first-order expansion of the phase change}
Since $\Theta_i\in C^2(U_i)$ by Theorem~\ref{thm:phaseclassical} and $(x,\eps)\mapsto K_i^\eps(x)$ is $C^2$ on $\overline{V_i}\times[-\eps_0,\eps_0]$, Taylor's theorem in $\eps$, applied to a local real-valued lift of $\Theta_i$ on a simply connected neighbourhood of a compact arc of the cycle, gives
\begin{equation}\label{eq:C2Taylor}
	\Theta_i(K_i^\eps(x))-\Theta_i(x)
	=\eps\,\dd\Theta_{i,x}(B_i(x))+R_i(x,\eps),
\end{equation}
where, on every compact set $K\subset V_i$,
\[
\|R_i(\cdot,\eps)\|_{C^0(K)}\le C_K\eps^2,
\qquad
\|R_i(\cdot,\eps)\|_{C^1(K)}=o(\eps).
\]
The first bound comes from the second derivative in $\eps$. The second holds because $D_xR_i$ is $C^1$ in $\eps$ and vanishes to first order at $\eps=0$.

\subsection{Expansion of the Poincar\'e return}
Let $J\subset(0,2\pi)$ be a compact interval and $d_J:=\min_{\psi\in J}\min\{\psi,2\pi-\psi\}>0$. In the uncoupled system, if oscillator~1 emits at phase difference $\psi\in J$, oscillator~2 emits after time $(2\pi-\psi)/\omega$ and oscillator~1 again after a further time $\psi/\omega$; both gaps are at least $d_J/\omega$. With transversality of the sections and compactness of $J$, this implies that the order of emissions persists uniformly for small transverse displacements and small $|\eps|$.

In the coordinates~\eqref{eq:tubecoordinates} the uncoupled transverse return has derivative $D:=\diag(\rho_1,\rho_2)$ at $a=0$. For $r_*>0$ let $\mathcal N_{J,r_*}:=J\times\overline{B_{r_*}(0)}$ be the closed coordinate tube. After decreasing $r_*$ and $\eps_0$ if necessary, it lies in the coordinate domain and the return~\eqref{eq:Poincare} is defined on a neighbourhood of it for every $|\eps|<\eps_0$.

\begin{proposition}[Uniform return-map expansion]\label{prop:returnexp}
	There exist $r_*,\eps_0>0$ such that, for $(\psi,a)\in\mathcal N_{J,r_*}$ and $|\eps|<\eps_0$, the return $\mathcal P_\eps$ is $C^2$ and has the form
	\begin{align}
		\psi'&=\psi+\eps H(\psi)+\mathcal E(\psi,a,\eps),\label{eq:recphase}\\
		a'&=Da+\eps b(\psi)+Q(\psi,a,\eps),\nonumber
	\end{align}
	with $b\in C^1(J;\R^2)$ and, uniformly on $\mathcal N_{J,r_*}$,
	\begin{align}
		|\mathcal E(\psi,a,\eps)|
		&\le C\bigl(\eps^2+|\eps|\,\|a\|\bigr),\label{eq:Evalue}\\
		\|D_a\mathcal E(\psi,a,\eps)\|
		&\le C|\eps|,\label{eq:Eaderivative}\\
		\|Q(\psi,a,\eps)\|
		&\le C\bigl(\|a\|^2+|\eps|\,\|a\|+\eps^2\bigr),\label{eq:Qvalue}\\
		\|D_aQ(\psi,a,\eps)\|+\|\partial_\psi Q(\psi,a,\eps)\|
		&\le C\bigl(\|a\|+|\eps|\bigr).\label{eq:Qderivatives}
	\end{align}
\end{proposition}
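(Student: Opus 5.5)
The plan is to treat $\mathcal P_\eps$ as a $C^2$ perturbation, in $(z,\eps)$, of the uncoupled return $\mathcal P_0$, and to read off each component in the coordinates~\eqref{eq:tubecoordinates}.

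\emph{Step 1: regularity and uniformity.} By the hitting-time result of Section~\ref{sec:sectionsimpulses}, $\tau_1,\tau_2$ and $\mathcal P_\eps$ are $C^2$ near each point of $\mathcal C_0(J)$. Since the emission gaps are at least $d_J/\omega$, the order of emissions persists on a tube. By compactness of $J$, finitely many such neighbourhoods cover $\mathcal C_0(J)$, and the local uniqueness of hitting times near $\tau_0$ makes the local definitions agree on overlaps. This gives $r_*,\eps_0$ such that $\mathcal P_\eps$ is $C^2$ on a neighbourhood of $\mathcal N_{J,r_*}\times[-\eps_0,\eps_0]$, with all second derivatives bounded by a constant $C$.

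\emph{Step 2: the phase component.} Apply Lemma~\ref{lem:phaseidentity} together with~\eqref{eq:C2Taylor}:
\[
\psi'-\psi=\eps\,\dd\Theta_2(B_2(x_2))-\eps\,\dd\Theta_1(B_1(y_1))+R_2(x_2,\eps)-R_1(y_1,\eps).
\]
The remainders are $O(\eps^2)$ in $C^0$ and $o(\eps)$ in $C^1$; since $D_a$ of them is $O(\eps)$, this already suffices for~\eqref{eq:Eaderivative}. For the main terms, $x_2=\gamma_2(\psi)+O(\|a\|)$ and $y_1=\phi^1_{\tau_2}(x_1)$, with $\tau_2=(2\pi-\psi)/\omega+O(\|a\|+|\eps|)$. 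Hence $y_1=\gamma_1(-\psi)+O(\|a\|+|\eps|)$. The functions $x\mapsto\dd\Theta_i(B_i(x))$ are $C^1$, so the main terms equal $\eps H(\psi)+O(\eps\|a\|+\eps^2)$. This gives~\eqref{eq:Evalue}. For~\eqref{eq:Eaderivative}, $\mathcal E$ equals $\eps$ times a $C^1$ function of $(\psi,a,\eps)$ plus the remainders, so $D_a\mathcal E=O(\eps)$. The $\eps$ factor comes from the identity $\psi\circ\mathcal P_0=\psi$ on all of $\mathcal V$, which is why the lemma is stated off the cycles.

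\emph{Step 3: the transverse component.} Define
\[
b(\psi):=\partial_\eps a'(\psi,0,0),\qquad Q:=a'-Da-\eps b(\psi).
\]
Since $\mathcal P_\eps$ is $C^2$, $b$ is $C^1$. At $\eps=0$ the $a$-component of $\mathcal P_0$ is $a'(\psi,a,0)$. This is the product of the two uncoupled transverse returns, sampled at a $\psi$-dependent time: the map $a_1\mapsto a_1'$ is the Poincaré map of $\Sigma_1$, while $a_2'$ is $r_2$ of $\phi^2_{T}$ applied after flying from $x_2$ (the flight time is $T$ plus a correction depending on $a_1$). It fixes $a=0$ for every $\psi$, so
\[
a'(\psi,0,0)=0,\qquad \partial_\psi a'(\psi,0,0)=0.
\]
Its $a$-derivative at $a=0$ should be $D$. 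This is the main obstacle. Is $D_aa'(\psi,0,0)$ exactly $\diag(\rho_1,\rho_2)$ for all $\psi$, or triangular with an off-diagonal $\partial a_2'/\partial a_1$ term? That term arises because the return time, $T$ plus a shift depending on $a_1$, is fed into the flow of oscillator~2.

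I would first check that moving $x_2$ along the flow for a time $s$ changes $r_2$ only at order $\|a_2\|\,s$, because $\dd r_2(F_2)=0$ on $\Gamma_2$. Then $\partial a_2'/\partial a_1=0$ at $a=0$, and the derivative is $D$ exactly. The multiplier of the linearised return of $\phi_T^2$ in the $r_2$ direction is $\rho_2$ provided $\dd r_2$ is evaluated in $\Theta$-aligned coordinates; if needed, choose $r_i$ to be constant along isochron-compatible directions so that this holds. With this, Taylor's theorem at $(a,\eps)=(0,0)$ to second order, with bounded $C^2$ norm, gives~\eqref{eq:Qvalue}. Differentiating once gives~\eqref{eq:Qderivatives}, using $\partial_\psi$ of the zero-order term being $0$ and $\partial_\psi$ of the $\eps$-term absorbed into $\eps b$.
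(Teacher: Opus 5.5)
Your proposal is correct and follows the paper's route: $C^2$ regularity from the composition of flows, kicks and hitting times together with compactness; the phase component from Lemma~\ref{lem:phaseidentity} with~\eqref{eq:C2Taylor}; and the transverse component from a second-order Taylor expansion in $(a,\eps)$. On that last point you check more carefully than the paper that the linear part is exactly $D$, since the cross term $\partial a_2'/\partial a_1$ vanishes at $a=0$ by invariance of $\Gamma_2$, i.e.\ $\dd r_2(F_2)=0$ on $\Gamma_2$. Your proviso about re-choosing $r_i$ is unnecessary: on $\Gamma_2$ the covector $\dd r_2$ annihilates $F_2$, the eigenvalue-$1$ direction of $D\phi^2_T$, so $\dd r_2\circ D\phi^2_T=\rho_2\,\dd r_2$ there and $\partial a_2'/\partial a_2=\rho_2$ at $a=0$ for every admissible $r_2$.
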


\begin{proof}
	The return~\eqref{eq:Poincare} is a composition of $C^2$ free-flight maps, impulse maps and hitting-time functions. In the coordinates~\eqref{eq:tubecoordinates} it is therefore $C^2$ jointly in $(\psi,a,\eps)$ on a neighbourhood of the compact tube $\mathcal N_{J,r_*}$, and compactness makes all constants below uniform.
	
	\emph{Phase component.} By Lemma~\ref{lem:phaseidentity}, $\psi'-\psi$ is the difference of the two impulsive phase changes in~\eqref{eq:exactphase}; in particular it vanishes identically at $\eps=0$, whatever the transverse coordinates. Let $G(\psi,a)$ be the derivative of $\psi'$ with respect to $\eps$ at $\eps=0$. Since the return is $C^2$, $G$ is $C^1$ and, as in the preceding subsection,
	\[
	\psi'-\psi=\eps\,G(\psi,a)+\widetilde{\mathcal E}(\psi,a,\eps),
	\qquad
	|\widetilde{\mathcal E}|\le C\eps^2,
	\qquad
	\|\widetilde{\mathcal E}(\cdot,\cdot,\eps)\|_{C^1}=o(\eps).
	\]
	By~\eqref{eq:C2Taylor}, $G$ is the difference of the values of $\dd\Theta_i(B_i)$ at the two points where the impulses are received at $\eps=0$. For $a=0$ these points are $\gamma_2(\psi)$ and $\gamma_1(-\psi)$, so $G(\psi,0)=h_2(\psi)-h_1(-\psi)=H(\psi)$. Hence~\eqref{eq:recphase} holds with
	\[
	\mathcal E(\psi,a,\eps)
	=\eps\bigl[G(\psi,a)-G(\psi,0)\bigr]+\widetilde{\mathcal E}(\psi,a,\eps),
	\]
	and~\eqref{eq:Evalue} and~\eqref{eq:Eaderivative} follow because $G$ is $C^1$.
	
	\emph{Transverse components.} At $\eps=0$ the transverse return has derivative $D$ at $a=0$: over one period the two transverse variations are multiplied by $\rho_1$ and $\rho_2$. The impulses contribute a term of order $\eps$, written $\eps b(\psi)$, and Taylor's theorem in $(a,\eps)$ on the tube gives~\eqref{eq:Qvalue} and~\eqref{eq:Qderivatives}.
\end{proof}

Within $O(\eps)$ of the circle $a=0$, the phase remainder is of higher order:

\begin{corollary}[Phase remainder along small graphs]\label{cor:remainder}
	In the setting of Proposition~\ref{prop:returnexp}, for every $C_0>0$,
	\begin{equation}\label{eq:Epsiderivative}
		\sup\bigl\{|\partial_\psi\mathcal E(\psi,a,\eps)|:\psi\in J,\ \|a\|\le C_0|\eps|\bigr\}=o(\eps).
	\end{equation}
	If $a_\eps:J\to\R^2$ is Lipschitz with $\|a_\eps\|_{\infty}+\Lip(a_\eps)=O(\eps)$, then
	\begin{equation}\label{eq:EcomposedLip}
		\|\mathcal E(\cdot,a_\eps(\cdot),\eps)\|_{C^0(J)}=O(\eps^2),
		\qquad
		\Lip\bigl(\mathcal E(\cdot,a_\eps(\cdot),\eps)\bigr)=o(\eps),
	\end{equation}
	and if $a_\eps$ is $C^1$ with $\|a_\eps\|_{C^1(J)}=O(\eps)$, then
	\begin{equation}\label{eq:Ecomposed}
		\|\mathcal E(\cdot,a_\eps(\cdot),\eps)\|_{C^1(J)}=o(\eps).
	\end{equation}
\end{corollary}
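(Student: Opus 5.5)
The plan is to use the explicit decomposition of $\mathcal E$ from the proof of Proposition~\ref{prop:returnexp},
\[
\mathcal E(\psi,a,\eps)=\eps\bigl[G(\psi,a)-G(\psi,0)\bigr]+\widetilde{\mathcal E}(\psi,a,\eps),
\]
where $G$ is $C^1$ on the compact tube $\mathcal N_{J,r_*}$, $|\widetilde{\mathcal E}|\le C\eps^2$ and $\|\widetilde{\mathcal E}(\cdot,\cdot,\eps)\|_{C^1}=o(\eps)$ uniformly on the tube. All three claims follow by estimating these two pieces separately.

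For~\eqref{eq:Epsiderivative}, differentiate in $\psi$:
\[
\partial_\psi\mathcal E=\eps\bigl[\partial_\psi G(\psi,a)-\partial_\psi G(\psi,0)\bigr]+\partial_\psi\widetilde{\mathcal E}.
\]
The second term is $o(\eps)$ uniformly on the tube. For the first term, $\partial_\psi G$ is continuous on the compact tube and hence uniformly continuous. Let $\varpi$ be its modulus of continuity. Then for $\|a\|\le C_0|\eps|$ the bracket is bounded by $\varpi(C_0|\eps|)\to0$, so the first term is $|\eps|\,o(1)=o(\eps)$. This is the only subtle point. $G$ is only $C^1$, so we cannot claim that $\partial_\psi G(\psi,a)-\partial_\psi G(\psi,0)=O(\|a\|)$. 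The estimate therefore yields $o(\eps)$ rather than $O(\eps^2)$, and uniform continuity is exactly what is needed.

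For~\eqref{eq:EcomposedLip}, set $e(\psi):=\mathcal E(\psi,a_\eps(\psi),\eps)$.

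\emph{Sup bound.} By~\eqref{eq:Evalue} with $\|a_\eps\|_\infty=O(\eps)$ we get $|e|\le C(\eps^2+|\eps|\,O(\eps))=O(\eps^2)$.

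\emph{Lipschitz bound.} For $\psi,\psi'\in J$, split
\[
e(\psi)-e(\psi')=\bigl[\mathcal E(\psi,a_\eps(\psi),\eps)-\mathcal E(\psi',a_\eps(\psi),\eps)\bigr]+\bigl[\mathcal E(\psi',a_\eps(\psi),\eps)-\mathcal E(\psi',a_\eps(\psi'),\eps)\bigr].
\]
The first bracket is controlled by the mean value theorem in $\psi$ along the segment at the fixed value $a=a_\eps(\psi)$, which satisfies $\|a\|\le C_0|\eps|$. By~\eqref{eq:Epsiderivative} this bracket is at most $o(\eps)|\psi-\psi'|$. For this we use that $J$ is an interval, so the segment lies in $J$. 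The second bracket is controlled by the mean value theorem in $a$ on the convex ball together with~\eqref{eq:Eaderivative}. This gives the bound $C|\eps|\,\Lip(a_\eps)|\psi-\psi'|=O(\eps^2)|\psi-\psi'|$. Summing the two brackets gives $\Lip(e)=o(\eps)$.

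For~\eqref{eq:Ecomposed}, the same estimates apply with the chain rule in place of the mean value theorem:
\[
e'(\psi)=\partial_\psi\mathcal E+D_a\mathcal E\,a_\eps'(\psi)=o(\eps)+C|\eps|\cdot O(\eps)=o(\eps).
\]
Combined with the $C^0$ bound $O(\eps^2)$ already obtained, this gives $\|e\|_{C^1(J)}=o(\eps)$.
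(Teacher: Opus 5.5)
Your proof is correct and follows the paper's argument: the decomposition $\mathcal E=\eps[G(\psi,a)-G(\psi,0)]+\widetilde{\mathcal E}$ combined with uniform continuity of $DG$ on the compact tube. The only difference is organisational: you obtain the composed Lipschitz and $C^1$ bounds from \eqref{eq:Epsiderivative} and \eqref{eq:Eaderivative} by splitting into $\psi$- and $a$-increments, whereas the paper estimates the Lipschitz constant of $G(\cdot,a_\eps(\cdot))-G(\cdot,0)$ directly.
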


\begin{proof}[Proof of Corollary~\ref{cor:remainder}]
	We use the decomposition $\mathcal E=\eps[G(\psi,a)-G(\psi,0)]+\widetilde{\mathcal E}$ from the preceding proof. Since $DG$ is uniformly continuous on the tube, it gives~\eqref{eq:Epsiderivative}. If $\|a_\eps\|_\infty+\Lip(a_\eps)=O(\eps)$, then $G(\psi,a_\eps(\psi))-G(\psi,0)$ has uniform norm $O(\eps)$, and its Lipschitz constant is $o(1)$ because $DG$ is uniformly continuous on the tube and $a_\eps\to0$ uniformly. Multiplying by $\eps$ and adding $\widetilde{\mathcal E}$ gives~\eqref{eq:EcomposedLip}. If $a_\eps$ is $C^1$ with $\|a_\eps\|_{C^1}=O(\eps)$, the same calculation with derivatives gives~\eqref{eq:Ecomposed}.
\end{proof}

\subsection{Proof of Theorem~\ref{thm:reciprocallocal} and Corollary~\ref{cor:period}}
\begin{proof}[Proof of Theorem~\ref{thm:reciprocallocal}]
	Fix a compact interval $J\subset(0,2\pi)$ with $\psi_*$ in its interior. By Proposition~\ref{prop:returnexp}, the fixed points of the return are the solutions of
	\begin{align}
		0&=\eps H(\psi)+\mathcal E(\psi,a,\eps),\label{eq:fixedphaseproof}\\
		0&=(D-I)a+\eps b(\psi)+Q(\psi,a,\eps).\label{eq:fixedampproof}
	\end{align}
	Consider the second equation first. The map $\Pi(a,\psi,\eps):=(I-D)a-\eps b(\psi)-Q(\psi,a,\eps)$ satisfies $D_a\Pi(0,\psi,0)=I-D$, which is invertible because $0<\rho_i<1$. The implicit function theorem, with $\psi$ as a parameter, therefore gives an open neighbourhood of $(\psi_*,0)$ and a unique $C^1$ function
	\begin{equation}\label{eq:afix}
		a=a_\eps^{\rm fix}(\psi),
		\qquad
		\|a_\eps^{\rm fix}\|_{C^1}=O(\eps),
	\end{equation}
	that solves~\eqref{eq:fixedampproof}. Substituting it into~\eqref{eq:fixedphaseproof} and dividing by $\eps\neq0$ gives, by~\eqref{eq:EcomposedLip} and~\eqref{eq:Ecomposed},
	\begin{equation}\label{eq:scalarfix}
		0=H(\psi)+r_\eps(\psi),
		\qquad
		\|r_\eps\|_{C^0}=O(\eps),
		\qquad
		\|r_\eps\|_{C^1}=o(1).
	\end{equation}
	Choose a closed interval $I$ with $\psi_*$ in its interior and a constant $c>0$ such that $|H'|\ge c$ on $I$. For small $\eps$ we have $|r_\eps'|\le c/2$ on $I$, so $H+r_\eps$ is strictly monotone there, and it has opposite signs at the endpoints of $I$ because $H$ does and $\|r_\eps\|_{C^0}=O(\eps)$. Hence $H+r_\eps$ has exactly one zero $\psi_\eps$ in $I$, and by the mean value theorem
	\begin{equation}\label{eq:phasefixedproof}
		|\psi_\eps-\psi_*|\le \frac{2}{c}\,|r_\eps(\psi_*)|=O(\eps).
	\end{equation}
	With~\eqref{eq:afix}, this is the unique fixed point of the return near $(\psi_*,0)$.
	
	At this fixed point $a=O(\eps)$, and Proposition~\ref{prop:returnexp} gives
	\begin{equation}\label{eq:blockderivative}
		D\mathcal P_\eps=
		\begin{pmatrix}
			1+\eps H'(\psi_*)+o(\eps) & O(\eps)\\
			O(\eps) & D+o(1)
		\end{pmatrix}.
	\end{equation}
	Indeed, the first entry is $1+\eps H'(\psi_\eps)+\partial_\psi\mathcal E$, where $H'(\psi_\eps)=H'(\psi_*)+o(1)$ and $\partial_\psi\mathcal E=o(\eps)$ by~\eqref{eq:Epsiderivative}; the remaining entries are $D_a\mathcal E$, $\eps b'+\partial_\psi Q$ and $D+D_aQ$, bounded by~\eqref{eq:Eaderivative} and~\eqref{eq:Qderivatives}.
	
	Write $M_{11}$, $M_{12}$, $M_{21}$, $M_{22}$ for the four blocks. At $\eps=0$ the multiplier $1$ is separated from $\rho_1$ and $\rho_2$, so for $\lambda$ near $1$ the matrix $\lambda I-M_{22}$ is uniformly invertible and
	\[
	\det(\lambda I-D\mathcal P_\eps)
	=\det(\lambda I-M_{22})
	\bigl[\lambda-M_{11}-M_{12}(\lambda I-M_{22})^{-1}M_{21}\bigr].
	\]
	Since $M_{12}$ and $M_{21}$ are $O(\eps)$, the last product is $O(\eps^2)$ uniformly near $\lambda=1$. Hence the eigenvalue near $1$ is
	\begin{equation}\label{eq:relmultproof}
		\lambda_{\rm rel}
		=M_{11}+O(\eps^2)
		=1+\eps H'(\psi_*)+o(\eps),
	\end{equation}
	and the other two converge to $\rho_1$ and $\rho_2$. For small $\eps>0$ all three lie in the open unit disc if $H'(\psi_*)<0$, and $\lambda_{\rm rel}>1$ if $H'(\psi_*)>0$. By Definition~\ref{def:locking} this proves the stability statements.
\end{proof}

\begin{proof}[Proof of Corollary~\ref{cor:period}]
	The multipliers were obtained in the proof of Theorem~\ref{thm:reciprocallocal}. Over one period $\mathcal T_\eps$ of the locked state, the phase of oscillator~1 increases by $\omega\mathcal T_\eps$ along the flow and by $\eps h_1(-\psi_*)+O(\eps^2)$ through the impulse it receives, by~\eqref{eq:C2Taylor}, \eqref{eq:phasefixedproof} and $h_1\in C^1$. The total is $2\pi$:
	\[
	2\pi=\omega\mathcal T_\eps+\eps h_1(-\psi_*)+O(\eps^2),
	\]
	which is the first expression in~\eqref{eq:period}. The same calculation for oscillator~2 gives the second. The fixed point is hyperbolic because $\lambda_{\rm rel}\neq1$ by~\eqref{eq:relmultproof} and the other two multipliers converge to $\rho_1,\rho_2\in(0,1)$. Persistence is the continuation of a hyperbolic fixed point of a $C^2$ return map that depends continuously on the data in the $C^1$ topology.
\end{proof}

\subsection{The winding argument}

\begin{lemma}[Relative winding forces phase contacts]\label{lem:winding}
	Let $\Gamma$ be a planar limit cycle with phase gradient $Z$ and tangent field $F$, let $W$ be a continuous nonvanishing vector field along $\Gamma$, and put
	\[
	\Psi(\theta):=Z(\theta)\cdot W(\gamma(\theta)),
	\qquad
	m:=\wind_\Gamma(F)-\wind_\Gamma(W).
	\]
	If $m\neq0$, then $\Psi$ takes both positive and negative values and has at least $2|m|$ zeros. If moreover $\Psi$ is $C^1$ and every zero of $\Psi$ is simple, then the zeros are finite in number and equally many are downward and upward; in particular there are at least $|m|$ of each.
\end{lemma}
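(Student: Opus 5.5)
The idea is to compare the angle of $W$ with that of the phase gradient $Z$ rather than with $F$.

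First, the normalisation~\eqref{eq:normalisation} gives $Z(\theta)\cdot F(\gamma(\theta))=\omega>0$. So $Z$ and $F$ never point in opposite directions and never vanish, and the homotopy $(1-s)F/\norm F+sZ/\norm Z$ stays nonvanishing along $\Gamma$. Hence $\wind_\Gamma(Z)=\wind_\Gamma(F)$, where $Z$ is read as the field $\gamma(\theta)\mapsto Z(\theta)$. Therefore $m=\wind(Z)-\wind(W)$.

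Next, choose continuous lifts $\alpha(\theta)$ of the argument of $Z$ and $\beta(\theta)$ of the argument of $W$ on $[0,2\pi]$. Put $\chi:=\beta-\alpha$. Then $\chi(2\pi)-\chi(0)=-2\pi m$, and
\[
\Psi(\theta)=\norm{Z(\theta)}\,\norm{W(\gamma(\theta))}\cos\chi(\theta),
\]
with both norms positive. The sign of $\Psi$ is therefore the sign of $\cos\chi$, and $\Psi$ vanishes exactly where $\chi\in\pi/2+\pi\Z$.

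Since $\chi$ changes by $2\pi|m|$ over one turn, the intermediate value theorem applies to $\chi$ on $[0,2\pi)$. It shows that $\chi$ attains every level in an interval of length $2\pi|m|$. This interval contains $2|m|$ distinct values of $\pi/2+\pi\Z$, modulo the endpoint convention. Concretely, choose the base point so that $\chi(0)\notin\pi/2+\pi\Z$. Then the half-open range $[\chi(0),\chi(0)-2\pi m)$ contains exactly $2|m|$ such values. Each is attained at a distinct $\theta$ because the levels differ, which gives at least $2|m|$ zeros. The same range contains levels where $\cos\chi=1$ and levels where $\cos\chi=-1$, so $\Psi$ takes both signs.

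For the second part, Remark~\ref{rem:finitezeros} gives finitely many zeros. At a simple zero $\Psi$ changes sign: a downward zero goes from $+$ to $-$ and an upward zero from $-$ to $+$. Going once around the circle, these sign changes must alternate. There is at least one zero because $\Psi$ takes both signs, so the downward and upward zeros are equal in number. Their total is at least $2|m|$, so each count is at least $|m|$.

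The only delicate points are the homotopy step, which is immediate from~\eqref{eq:normalisation}, and the counting of distinct level crossings. I would handle the counting by choosing the base point off the zero set. Note that the lemma uses only the normalisation and not the specific form of $W$.
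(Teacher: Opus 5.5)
Your proof is correct and follows the paper's argument essentially step for step. The steps are: the straight-line homotopy giving $\wind_\Gamma(Z)=\wind_\Gamma(F)$; a continuous lift of the angle between $W$ and $Z$ (your $\chi$ is the negative of the paper's $\alpha$, which is harmless since $\cos$ is even); a base point with $\Psi(0)\neq0$, which exists because a continuous $\chi$ with values in the discrete set $\pi/2+\pi\Z$ would be constant, contradicting $m\neq0$; the intermediate value count of $2|m|$ levels; and the alternation of simple zeros around the circle.
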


\begin{proof}
	By~\eqref{eq:normalisation}, $Z(\theta)\cdot F(\gamma(\theta))>0$ for every $\theta$. The straight-line homotopy
	\[
	(1-s)\frac{F}{\|F\|}+s\frac{Z}{\|Z\|},
	\qquad 0\le s\le1,
	\]
	therefore never vanishes, and $Z$ has the same winding number as $F$.
	
	Let $\alpha:\R\to\R$ be a continuous lift of the angle from $W$ to $Z$. Then
	\[
	\alpha(\theta+2\pi)-\alpha(\theta)=2\pi m,
	\qquad
	\Psi(\theta)=\|Z(\theta)\|\,\|W(\gamma(\theta))\|\cos\alpha(\theta).
	\]
	Since $m\neq0$, $\alpha$ is not constant, so the two fields are not perpendicular at every phase and we may choose the phase origin with $\Psi(0)\neq0$. Over $[0,2\pi]$ the lift then runs from $\alpha(0)$ to $\alpha(0)+2\pi m$, neither of which is of the form $\pi/2+k\pi$, and the open interval between them contains exactly $2|m|$ such values. By the intermediate value theorem $\alpha$ attains each of them, at distinct phases in $(0,2\pi)$, and $\Psi$ vanishes there. The same interval contains values at which $\cos\alpha=1$ and values at which $\cos\alpha=-1$, so $\Psi$ takes both signs.
	
If $\Psi$ is $C^1$ and every zero is simple, then $\Psi$ changes sign at each zero, and the zeros are finite in number (Remark~\ref{rem:finitezeros}). Around the circle they are therefore alternately downward and upward, so there are equally many of each. Since there are at least $2|m|$ in all, at least $|m|$ are downward and at least $|m|$ upward.
\end{proof}

\begin{proof}[Proof of Theorem~\ref{thm:reciprocalgeometric}]
	Apply Lemma~\ref{lem:winding} to $\Gamma_2$, with $Z=Z_2$ and $W$ given by~\eqref{eq:effectivefield}, so that the integer $m$ of the lemma is~\eqref{eq:windingnumber}; by~\eqref{eq:effectiveidentity}, $\Psi=H$. Hence $H$ has at least $|m|$ downward and at least $|m|$ upward zeros. Each of them other than $\psi=0$ lies in $(0,2\pi)$, and Theorem~\ref{thm:reciprocallocal} applies to it. If $H(0)\neq0$ this covers every zero, which gives at least $|m|$ attracting and at least $|m|$ unstable locked states.
\end{proof}

\subsection{Generic constant translations}
\begin{lemma}[A constant translation produces advances and delays]\label{lem:translation}
	Let $\Gamma$ be a hyperbolic attracting planar limit cycle with phase gradient $Z$. For every $d\in\R^2\setminus\{0\}$ the function
	\[
	h_d(\theta):=Z(\theta)\cdot d
	\]
	takes both positive and negative values. For $d/\|d\|$ in an open, dense subset of the unit circle with complement of measure zero, every zero of $h_d$ is simple.
\end{lemma}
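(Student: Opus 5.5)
The first claim follows from Lemma~\ref{lem:winding} applied to the constant field $W\equiv d$ along $\Gamma$. A constant nonvanishing field has winding number $0$, while $\wind_\Gamma(F)=\pm1$ by the Umlaufsatz. Hence $m=\pm1\neq0$, and the lemma gives that $\Psi=h_d$ takes both signs and has at least two zeros. Alternatively, $Z$ has the same winding number as $F$, namely $\pm1$, so $Z/\norm{Z}$ is a surjective map onto the unit circle. It therefore meets both open half-planes $\{v\cdot d>0\}$ and $\{v\cdot d<0\}$.

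For the genericity claim, write $Z(\theta)=\norm{Z(\theta)}\,(\cos\beta(\theta),\sin\beta(\theta))$ with a continuous lift $\beta$. Since $Z$ is $C^1$ and nonvanishing (by~\eqref{eq:normalisation}), $\beta$ is $C^1$. For $d=\norm d(\cos\varphi,\sin\varphi)$ we get $h_d(\theta)=\norm{Z(\theta)}\norm d\cos(\beta(\theta)-\varphi)$. A zero satisfies $\beta(\theta)\equiv\varphi\pm\pi/2 \pmod{2\pi}$. Differentiating there, $h_d'(\theta)=\mp\norm{Z}\norm d\,\beta'(\theta)$, because the term containing the derivative of $\norm Z$ is multiplied by $\cos=0$. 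So a zero is degenerate exactly when $\beta'(\theta)=0$. The bad set of directions is therefore
\[
\mathcal B=\bigl\{\varphi:\ \varphi\pm\pi/2\equiv\beta(\theta)\ \text{for some critical point }\theta\text{ of }\beta\bigr\},
\]
which is a rotation by $\pm\pi/2$ of the set of critical values of the $C^1$ map $\beta \bmod 2\pi:\Sone\to\Sone$.

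The main step is measure zero together with closedness. Sard's theorem for a $C^1$ map between one-dimensional manifolds applies, since the regularity required is $C^k$ with $k\ge\max(1,n-m+1)=1$. So the critical values have measure zero, and $\mathcal B$ has measure zero. The critical point set $\{\beta'=0\}$ is closed in the compact circle, hence compact. Its image under the continuous map $\beta \bmod 2\pi$ is compact, hence closed. The complement of $\mathcal B$ is therefore open. A closed set of measure zero has empty interior, so this complement is also dense.

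The only delicate point is regularity: Sard must be applied to a $C^1$ map. This is fine because $Z$ is $C^1$, and in fact $C^2$ in time by the adjoint equation remark, so no strengthening is needed. Finally, positivity of $\norm d$ does not affect anything, so the condition depends only on $d/\norm d$, as claimed.
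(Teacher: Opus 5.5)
Your proof is correct and follows essentially the paper's route. The first claim comes from Lemma~\ref{lem:winding} with $W\equiv d$ and $m=\pm1$, and the genericity claim from one-dimensional Sard together with compactness, which makes the bad set of directions closed. The paper phrases the second part implicitly, through the regular curve $\Lambda^{-1}(0)$ of $\Lambda(\theta,\alpha)=Z(\theta)\cdot d_\alpha$ on the torus and its projection to $\alpha$; on the two branches of that curve this projection is exactly your explicit map $\theta\mapsto\beta(\theta)\mp\pi/2$. So in both versions the degenerate zeros are the points where the angle of $Z$ is stationary.
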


\begin{proof}
	The constant field $d$ has winding number $0$ and the tangent field has winding number $\pm1$. Lemma~\ref{lem:winding} with $W=d$ therefore applies with $m=\pm1$, and $h_d$ takes both signs.
	
	Write $d_\alpha=(\cos\alpha,\sin\alpha)$ and consider the $C^1$ function
	\[
	\Lambda(\theta,\alpha):=Z(\theta)\cdot d_\alpha
	\qquad\text{on }\Sone\times\Sone.
	\]
	At a zero of $\Lambda$ the vector $Z(\theta)$ is perpendicular to $d_\alpha$, and hence
	\[
	\partial_\alpha\Lambda(\theta,\alpha)
	=Z(\theta)\cdot(-\sin\alpha,\cos\alpha)\neq0.
	\]
	Thus $0$ is a regular value and $\Lambda^{-1}(0)$ is a compact $C^1$ curve in the torus \cite{Lee2013}. A direction $\alpha$ is a regular value of the projection of this curve to the $\alpha$-coordinate exactly when $\partial_\theta\Lambda\neq0$ at every zero, that is, when every zero of $h_{d_\alpha}$ is simple. By Sard's theorem the critical values have measure zero, and they form a closed set because the curve is compact. The regular directions are therefore open and of full measure, hence dense.
\end{proof}

\begin{proof}[Proof of Theorem~\ref{thm:opencouplings}]
	Choose $d_2$ such that all zeros of $h_2=Z_2\cdot d_2$ are simple and $h_2(0)\neq0$. By Lemma~\ref{lem:translation} this holds for an open, dense set of directions with complement of measure zero, the second condition excluding only the two directions perpendicular to $Z_2(0)$. Since $h_2$ changes sign and its zeros are simple, it has a downward zero $\psi_s\in(0,2\pi)$.
	
	With $d_1$ replaced by $\eta d_1$, $\eta>0$, the function $H$ becomes
	\[
	H_\eta(\psi)=h_2(\psi)-\eta h_1(-\psi),
	\]
	and $H_\eta\to h_2$ in $C^1(\Sone)$ as $\eta\to0^+$. By the implicit function theorem, $\psi_s$ continues to a zero $\psi_s(\eta)$ of $H_\eta$ for small $\eta$. After decreasing $\eta_0$ if necessary, these zeros lie in a compact interval $J\subset(0,2\pi)$ and satisfy
	\[
	H_\eta'(\psi_s(\eta))\le -c<0,
	\qquad 0\le\eta\le\eta_0.
	\]
	The estimates of Proposition~\ref{prop:returnexp} are uniform on this compact set of phases and ratios $\eta$. Hence a single $\eps_0>0$ can be chosen for which Theorem~\ref{thm:reciprocallocal} gives an orbitally asymptotically stable locked state for all $0<\eta<\eta_0$ and $0<\eps<\eps_0$.
	
	Simplicity of a zero and the sign of its derivative are open conditions in $C^1$, and the compactness argument is uniform under small changes of $d_1$, $d_2$ and $\eta$. This gives the persistence statement.
\end{proof}

\section{Global circle dynamics}\label{sec:global}

Theorems~\ref{thm:reciprocallocal}--\ref{thm:opencouplings} produce locked states one at a time, at phase differences where the order of the emissions is fixed. We now organise the dynamics for all phase differences on a circle. The only new difficulty lies near simultaneous emission, where the order of the emissions can change and a kick may carry the receiver across its own section.

A map $R$ between subsets of Euclidean space is \emph{Lipschitz} if $\|R(x)-R(y)\|\le C\|x-y\|$ for some $C>0$ and all $x,y$, and \emph{bi-Lipschitz onto its image} if, in addition, $\|R(x)-R(y)\|\ge c\|x-y\|$ for some $c>0$. For a bounded Lipschitz map $f$ on a compact set we write $\|f\|_{\mathrm{Lip}}:=\|f\|_\infty+\Lip(f)$. A continuous map is \emph{piecewise $C^1$} if its domain is divided into finitely many regions, on each of which the sequence of events is fixed, and its restriction to each region extends $C^1$ to the boundary of that region; the one-sided derivatives need not agree.

\subsection{A circle-map lemma}
Let $\pi:\R\to\Sone=\R/(2\pi\Z)$ be the quotient map. A \emph{lift} of an orientation-preserving circle homeomorphism $q$ is a strictly increasing map $\widetilde q:\R\to\R$ with
\[
\pi\circ\widetilde q=q\circ\pi,
\qquad
\widetilde q(\theta+2\pi)=\widetilde q(\theta)+2\pi.
\]

\begin{definition}[Topologically transverse fixed point]\label{def:toptransverse}
	A fixed point $\theta_*$ of $q$ is \emph{topologically transverse} if the displacement $\widetilde q(\theta)-\theta$ has opposite signs on the two sides of $\theta_*$. It is \emph{attracting} if the displacement is positive immediately to the left of $\theta_*$ and negative immediately to the right, and \emph{repelling} in the opposite case.
\end{definition}

\begin{definition}[Topologically Morse--Smale circle dynamics]\label{def:topMS}
	An orientation-preserving circle homeomorphism has \emph{topologically Morse--Smale dynamics} if it has a nonempty finite set of fixed points, all topologically transverse; attracting and repelling fixed points alternate around the circle; every other orbit converges forward to an attracting fixed point and backward to a repelling one; and there are no other nonwandering points.
\end{definition}

The next lemma needs a uniform first-order expansion everywhere, but a Lipschitz estimate only near the zeros of $H$, which the hypothesis $H(0)\neq0$ keeps away from $0$.

\begin{lemma}[Simple phase balances away from simultaneous emission]\label{lem:circle}
	Let $H\in C^1(\Sone)$ have at least one zero, all of them simple, and $H(0)\neq0$, and let $J_*\subset(0,2\pi)$ be a compact interval whose interior contains all zeros of $H$. Let $q_\eps$ be orientation-preserving circle homeomorphisms with lifts
	\begin{equation}\label{eq:circleexp}
		\widetilde q_\eps(\theta)
		=\theta+\eps H(\theta)+r_\eps(\theta),
		\qquad
		\|r_\eps\|_\infty=o(\eps),
		\qquad
		\Lip(r_\eps|_{J_*})=o(\eps).
	\end{equation}
	Then, for every sufficiently small $\eps>0$, $q_\eps$ has topologically Morse--Smale dynamics, with exactly one fixed point near each zero of $H$ and no others, attracting at downward zeros and repelling at upward zeros.
\end{lemma}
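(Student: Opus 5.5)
The plan is to analyse the displacement $D_\eps(\theta):=\widetilde q_\eps(\theta)-\theta=\eps H(\theta)+r_\eps(\theta)$, which is $2\pi$-periodic and continuous, and to show that for small $\eps$ its zeros are exactly one per zero of $H$, with the sign pattern of $H$. Let $\psi_1<\dots<\psi_n$ be the zeros of $H$ in $J_*$ (finitely many by Remark~\ref{rem:finitezeros}). Choose pairwise disjoint closed intervals $I_k\subset J_*$, with $\psi_k$ interior to $I_k$, on which $|H'|\ge c>0$. On the compact complement $K:=\Sone\setminus\bigcup_k\operatorname{int}I_k$ the function $H$ has no zeros, so $|H|\ge c'>0$ there. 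Since $\|r_\eps\|_\infty=o(\eps)$, for small $\eps$ we have $|r_\eps|<\eps c'/2$ on $K$. Hence $D_\eps$ has no zeros on $K$ and has the sign of $H$ there; this covers in particular a neighbourhood of $\psi=0$, where only the uniform bound is available.

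On each $I_k$ I will use the Lipschitz hypothesis on $J_*$. Suppose $H'\ge c$ on $I_k$ (upward case). For $\theta<\theta'$ in $I_k$,
\[
D_\eps(\theta')-D_\eps(\theta)\ge \eps c(\theta'-\theta)-\Lip(r_\eps|_{J_*})(\theta'-\theta)\ge \tfrac{\eps c}{2}(\theta'-\theta)
\]
for small $\eps$, so $D_\eps$ is strictly increasing on $I_k$. Its values at the endpoints have the signs of $H$ there, by the previous paragraph, since the endpoints lie in $K$. It therefore has exactly one zero $\theta_k^\eps\in I_k$, negative to the left and positive to the right: a topologically transverse repelling fixed point. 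The downward case is symmetric and gives an attracting point. Moreover $|\theta_k^\eps-\psi_k|\le 2|r_\eps(\psi_k)|/(\eps c)=o(1)$. Because the zeros of $H$ are simple, the signs alternate around the circle, so the attracting and repelling fixed points alternate.

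For the dynamics, fix an arc between consecutive fixed points on which $D_\eps$ has constant sign, say positive. The arc is invariant under $\widetilde q_\eps$, since $\widetilde q_\eps$ is increasing and fixes its endpoints. Forward orbits in it are increasing and bounded, and so converge to a fixed point, which must be the right endpoint. Backward orbits decrease to the left endpoint. Consequently the left endpoint is repelling and the right one attracting, consistently with the previous paragraph. No point in the open arc is nonwandering: for a small interval $U$ around such a point, the iterates $q^n(U)$ move monotonically and eventually leave $U$ forever. This is the standard argument for circle homeomorphisms whose fixed points are finite and whose other orbits are monotone. Together with the alternation established above, this gives Definition~\ref{def:topMS}.

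The main obstacle is the matching between the two regimes. Only a $C^0$ estimate on $r_\eps$ is available near simultaneous emission, so the intervals $I_k$ must be chosen first, independently of $\eps$, inside $J_*$. This way the Lipschitz control is needed only there, while the sign control on $K$ uses only $\|r_\eps\|_\infty=o(\eps)$ against the fixed bound $\eps c'$. I would check carefully that the constants $c,c'$ are fixed before $\eps$ is taken small.
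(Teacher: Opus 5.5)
Your proposal is correct and follows the paper's proof essentially step by step. You use the Lipschitz bound on $J_*$ to make the displacement strictly monotone on fixed intervals around the zeros, the uniform bound $\|r_\eps\|_\infty=o(\eps)$ to control its sign everywhere else, and monotone convergence between consecutive fixed points to get the dynamics. The differences are cosmetic: you merge the paper's separate neighbourhood of $\psi=0$ with the remaining compact set into a single set $K$, and you spell out the non-wandering argument that the paper leaves implicit.
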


\begin{proof}
	Since $H(0)\neq0$, there is a closed neighbourhood $U$ of $0$ on which $|H|\ge c_0>0$. By the uniform bound in~\eqref{eq:circleexp} the displacement $\widetilde q_\eps(\theta)-\theta$ has the sign of $H$ on $U$ for small $\eps$, so $q_\eps$ has no fixed point in $U$.
	
	The zeros of $H$ are finite in number (Remark~\ref{rem:finitezeros}). Around each zero $\theta_*$ choose a closed interval $I_{\theta_*}$ in the interior of $J_*$, with $\theta_*$ in its interior, on which $|H'|\ge c_*>0$, these intervals being pairwise disjoint. By the Lipschitz bound in~\eqref{eq:circleexp} the displacement is strictly monotone on $I_{\theta_*}$ for small $\eps$, and by the uniform bound it has the signs of $H$ at the endpoints. It therefore has exactly one zero there, attracting if $H'(\theta_*)<0$ and repelling if $H'(\theta_*)>0$.
	
	On the complement of $U$ and of the interiors of the intervals $I_{\theta_*}$, which is compact, $|H|$ has a positive minimum, and the $o(\eps)$ remainder allows no further fixed point. Between consecutive fixed points the displacement has constant sign, and since $q_\eps$ preserves orientation every orbit there moves monotonically from the repelling endpoint to the attracting one. This gives the Morse--Smale dynamics.
 
\end{proof}

\begin{lemma}[Circle conjugacy from the fixed-point pattern]\label{lem:circleconjugacy}
	Two orientation-preserving circle homeomorphisms with topologically Morse--Smale dynamics are topologically conjugate if they have the same number of fixed points, with the same cyclic order of attracting and repelling types.
\end{lemma}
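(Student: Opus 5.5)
The plan is to build the conjugacy by hand: match fixed points first, then match the open arcs between them, using fundamental domains on each arc. Let $f$ and $g$ be the two homeomorphisms, with fixed points $\theta_1<\dots<\theta_n$ for $f$ and $\sigma_1<\dots<\sigma_n$ for $g$, listed in cyclic order. Since the type sequences agree cyclically, we may relabel so that $\theta_k$ and $\sigma_k$ have the same type for every $k$. Attracting and repelling points alternate, so $n$ is even. Put $h(\theta_k)=\sigma_k$. It then suffices to build, on each open arc $A_k=(\theta_k,\theta_{k+1})$, an increasing homeomorphism $h_k:A_k\to B_k=(\sigma_k,\sigma_{k+1})$ with $h_k\circ f=g\circ h_k$.

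On a fixed arc $A_k$, the lift displacement $\widetilde f(\theta)-\theta$ has constant sign. By the type conventions of Definition~\ref{def:toptransverse}, the sign is the same on $A_k$ and on $B_k$, because the endpoint types match. Without loss of generality the sign is positive, so points move from the repelling endpoint $\theta_k$ toward the attracting endpoint $\theta_{k+1}$.

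Choose $x_0\in A_k$ and $y_0\in B_k$. The half-open interval $D=[x_0,f(x_0))$ is a fundamental domain, and so is $E=[y_0,g(y_0))$. Because $f$ is increasing and fixes no point of $A_k$, the orbit $f^n(x_0)$ is strictly monotone in $n$. It is bounded, so it converges to a fixed point, and the only candidates are the endpoints of $A_k$. Hence $f^n(x_0)\to\theta_{k+1}$ as $n\to+\infty$ and $f^n(x_0)\to\theta_k$ as $n\to-\infty$. The Morse--Smale definition also gives this directly. It follows that $A_k=\bigsqcup_{n\in\Z}f^n(D)$, and in the same way $B_k=\bigsqcup_{n\in\Z}g^n(E)$.

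Let $\ell:[x_0,f(x_0)]\to[y_0,g(y_0)]$ be the affine increasing homeomorphism. On $f^n(D)$ define $h_k=g^n\circ\ell\circ f^{-n}$. Since $\ell(f(x_0))=g(y_0)=g(\ell(x_0))$, the pieces agree at the shared endpoints. So $h_k$ is a well-defined increasing bijection $A_k\to B_k$, continuous on each closed piece, hence continuous. The relation $h_k\circ f=g\circ h_k$ holds by construction.

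Assembling the pieces gives a bijection $h$ of the circle that is monotone on each arc and preserves cyclic order. The step I expect to need the most care is continuity of $h$ at the fixed points. At $\theta_{k+1}$, a point of $A_k$ that approaches $\theta_{k+1}$ lies in $f^n(D)$ with $n\to+\infty$. Its image lies in $g^n(E)$, which accumulates only at $\sigma_{k+1}$. The same holds from the other side, using arc $A_{k+1}$. An order-preserving bijection of the circle with these one-sided limits is a homeomorphism. The conjugacy relation at the fixed points holds trivially, and this proves the lemma.
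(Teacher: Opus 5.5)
Your proof is correct and follows essentially the same route as the paper: you match the fixed points in cyclic order, conjugate each complementary arc by transporting a homeomorphism between the fundamental domains $[x_0,f(x_0)]$ and $[y_0,g(y_0)]$ along orbits, and use monotone convergence to the endpoints to glue the pieces continuously at the fixed points. You also make explicit some details the paper leaves implicit, namely the endpoint compatibility $\ell(f(x_0))=g(\ell(x_0))$ and why the arc displacements have matching signs.
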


\begin{proof}
	Let $q$ and $\widehat q$ be the two maps, and match their fixed points in cyclic order. On each open interval between consecutive fixed points both maps have no fixed point and move every point monotonically from the repelling endpoint to the attracting one. Choose a point $x$ in such an interval and a point $\widehat x$ in the corresponding one, choose an increasing homeomorphism from the interval between $x$ and $q(x)$ onto the interval between $\widehat x$ and $\widehat q(\widehat x)$, and extend it to all iterates. Monotone convergence to the endpoints gives continuity there. The conjugacies on the intervals agree at the fixed points and glue to a circle homeomorphism.
\end{proof}

\subsection{Periodic external forcing and the master--slave case}\label{sec:strobe}
For the smooth maps of this subsection, an invariant circle is \emph{normally attracting} if its normal bundle is invariant and uniformly contracted, while the dynamics along the circle is neutral or contracts more weakly. In the planar setting this is the standard normally hyperbolic invariant circle.

Let $\dot x=F(x)$ be a planar oscillator with a hyperbolic attracting cycle $\Gamma$ of period $T$, and let $K^\eps$ be a $C^2$ impulse map with first-order field $B$, applied once every time $T$ by an external periodic source or by a master oscillator unaffected by the receiver. Sampling immediately before successive impulses gives the smooth stroboscopic map
\[
P_\eps:=\phi_T\circ K^\eps.
\]
At $\eps=0$, $P_0=\phi_T$ fixes every point of $\Gamma$, with tangent multiplier $1$ and transverse multiplier $\rho\in(0,1)$, so $\Gamma$ is normally attracting.

\begin{theorem}[Stroboscopic Morse--Smale locking]\label{thm:strobe}
	For all sufficiently small $\eps$, the map $P_\eps$ has a $C^1$ normally attracting invariant circle $\Gamma_\eps$ in a small tubular neighbourhood of $\Gamma$. Let
	\[
	h(\theta):=\dd\Theta_{\gamma(\theta)}\bigl(B(\gamma(\theta))\bigr)
	\]
	be the first-order phase response. If $h$ has at least one zero and all its zeros are simple, then for every sufficiently small $\eps\neq0$ the restriction $P_\eps|_{\Gamma_\eps}$ is a $C^1$ Morse--Smale circle diffeomorphism with exactly one fixed point near each zero of $h$. Its attracting fixed points correspond to the downward zeros of $h$ if $\eps>0$, and to the upward zeros if $\eps<0$.
\end{theorem}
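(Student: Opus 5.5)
We would argue in three steps: first persistence of the invariant circle, then a phase expansion of the map restricted to it, and finally Lemma~\ref{lem:circle}. Since $P_\eps=\phi_T\circ K^\eps$ is $C^2$ jointly in $(x,\eps)$, it is $C^1$-close to $P_0=\phi_T$ on a compact tubular neighbourhood of $\Gamma$. At $\eps=0$, $\Gamma$ consists of fixed points, with tangent multiplier $1$ and transverse multiplier $\rho<1$, so it is a normally attracting (indeed normally hyperbolic) invariant circle. By the persistence theorem of Hirsch--Pugh--Shub \cite{HirschPughShub1977}, for small $|\eps|$ there is a unique $C^1$ invariant circle $\Gamma_\eps$, normally attracting and $C^1$-close to $\Gamma$. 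In the coordinates $(\Theta,r)$ of Section~\ref{sec:localreturn} we can write it as a graph $r=a_\eps(\theta)$ with $\|a_\eps\|_{C^1}=O(\eps)$. The $O(\eps)$ bound comes from the graph transform, since the perturbation $P_\eps-P_0$ is $O(\eps)$ in $C^1$. Alternatively, one can apply the implicit-function argument used for~\eqref{eq:afix} globally in $\theta$, which works because $1-\rho$ is invertible uniformly on the circle.

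Next we compute the phase of $P_\eps$. Using~\eqref{eq:phaseidentity} with period $T$, the free flight does not change $\Theta$ modulo $2\pi$, so $\Theta(P_\eps(x))=\Theta(K^\eps(x))$. By~\eqref{eq:C2Taylor},
\[
\Theta(P_\eps(x))-\Theta(x)=\eps\,\dd\Theta_x(B(x))+R(x,\eps).
\]
Parametrise $\Gamma_\eps$ by $\theta\mapsto x_\eps(\theta)$, the point of the graph with phase $\theta$. The induced circle map then has the lift
\[
\widetilde q_\eps(\theta)=\theta+\eps h(\theta)+r_\eps(\theta),
\]
with $r_\eps(\theta)=\eps\bigl[G(x_\eps(\theta))-G(\gamma(\theta))\bigr]+R(x_\eps(\theta),\eps)$ and $G=\dd\Theta(B)\in C^1$. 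The argument of Corollary~\ref{cor:remainder} gives $\|r_\eps\|_{C^1}=o(\eps)$. This holds uniformly on the whole circle, because there is no simultaneous-emission issue here and the map is smooth everywhere. For $\eps\neq0$ small, $\widetilde q_\eps$ is an orientation-preserving $C^1$ diffeomorphism, because $\widetilde q_\eps'=1+O(\eps)>0$.

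For $\eps>0$ we apply Lemma~\ref{lem:circle} with $H=h$, or more precisely its proof. The hypothesis $h(0)\neq0$ there was used only to keep zeros inside the interval $J_*$ where the Lipschitz estimate holds. Here the estimate holds globally, so we take the intervals $I_{\theta_*}$ around every zero. This gives exactly one fixed point near each zero. Each fixed point is hyperbolic, with derivative $1+\eps h'(\theta_*)+o(\eps)\neq1$: attracting at downward zeros and repelling at upward ones. The remaining Morse--Smale properties follow from monotone dynamics between consecutive fixed points, so the restriction is a $C^1$ Morse--Smale diffeomorphism. For $\eps<0$ we write $\eps h=|\eps|(-h)$; the roles of downward and upward zeros then swap, which gives the stated reversal.

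The main obstacle is the first step: establishing that $\Gamma_\eps$ is a $C^1$ graph with $C^1$-norm $O(\eps)$, and not merely $o(1)$. The $o(\eps)$ control of the remainder's derivative depends on this. We would first try a direct Lyapunov--Perron or graph-transform contraction in the $(\theta,r)$ coordinates. Since $P_\eps=P_0+O(\eps)$ in $C^1$ and the fibre contraction $\rho$ strictly dominates the neutral tangential dynamics, the fixed graph lies within $O(\eps)$ of the zero graph in $C^1$. This follows from the standard Lipschitz dependence of the invariant section on the map.
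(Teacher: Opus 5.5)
Your proposal is correct and is essentially the paper's proof: persistence of the normally attracting circle by normal hyperbolicity, then the lift $\widetilde q_\eps(\theta)=\theta+\eps h(\theta)+o_{C^1}(\eps)$ in the phase coordinate, then the monotonicity count near each zero giving exactly one hyperbolic fixed point there. You write these steps out in more detail, including the sign reversal for $\eps<0$. Two side remarks are off, though harmless. First, the pointwise implicit-function argument behind~\eqref{eq:afix} produces the curve on which the transverse coordinate is returned unchanged, which is not $P_\eps$-invariant in general, so it is not an alternative route to $\Gamma_\eps$. Second, the $O(\eps)$ bound in $C^1$ is not the real obstacle: the $o(1)$ $C^1$-closeness of $\Gamma_\eps$ to $\Gamma$ given directly by the persistence theorem already yields $\|r_\eps\|_{C^1}=o(\eps)$, by uniform continuity of $DG$ and $\|R(\cdot,\eps)\|_{C^1}=o(\eps)$.
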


\begin{proof}
	Normal hyperbolicity gives the invariant circle and its stable foliation. In the phase coordinate induced by $\Theta$, the circle map has a lift $\widetilde q_\eps(\theta)=\theta+\eps h(\theta)+o_{C^1}(\eps)$. As in the proof of Theorem~\ref{thm:reciprocallocal}, for small $\eps\neq0$ it has exactly one fixed point near each of the finitely many zeros $\theta_*$ of $h$ (Remark~\ref{rem:finitezeros}) and no others, with derivative $1+\eps h'(\theta_*)+o(\eps)$ there. Each fixed point is therefore hyperbolic, attracting if $\eps h'(\theta_*)<0$ and repelling if $\eps h'(\theta_*)>0$, and a circle diffeomorphism with a nonempty finite set of fixed points, all hyperbolic, is Morse--Smale.
\end{proof}

\subsection{Reciprocal return through nearly simultaneous emission}\label{sec:reciprocalglobal}

The local results of Section~\ref{sec:returnproof} do not require a convention at simultaneous emission. A global return does: a kick can move the receiver across its own emission section, and a literal return to the single section $\mathcal S_1$ can have different one-sided limits as the order of emission changes. We first state a condition on the \emph{paired-event transition}, establish the global dynamics under that condition, and then prove that exact invariance of the emission sections is sufficient for it. The regularity condition is not imposed on the impulse maps individually.

\paragraph{Events and the local transition.}
Near $p_i$ choose a $C^2$ defining function $g_i$ of $\Sigma_i$ with $\dd g_i(F_i)>0$. Thus incoming and outgoing points have $g_i<0$ and $g_i>0$, respectively. Fix $\tau_*>0$ smaller than a uniform positive lower bound for the times between consecutive positively oriented natural crossings near either cycle. Take $|\eps|$ so small that any short recrossing caused by an $O(|\eps|)$ kick occurs within $\tau_*$. Such recrossings take $O(|\eps|)$ time by transversality, whereas successive genuine emissions are separated by a time bounded below independently of $\eps$.

\begin{definition}[Single-emission convention]\label{def:singleemission}
Oscillator $i$ is \emph{eligible} to emit if at least $\tau_*$ has elapsed since its last recorded emission. A positively oriented crossing of $\Sigma_i$ is counted as an emission if oscillator $i$ is eligible. If an eligible receiver is carried by a kick from the incoming side onto or across $\Sigma_i$, its emission is recorded at the instant of the kick. A recrossing within $\tau_*$ of a recorded emission is ignored. At exact simultaneity both emissions are recorded, and the two impulses act on the pre-impulse states as in~\eqref{eq:simultaneous}.
\end{definition}

On the return domains below, the last emission of each oscillator is determined by the return episode. Consequently the convention does not introduce an independent memory coordinate. A \emph{paired episode} is a segment of trajectory containing exactly
two recorded emissions, one from each oscillator, possibly at the same instant. The segment begins before both emissions and ends after both,
before the next recorded emission of either oscillator.

Choose an incoming three-dimensional section $\mathcal S_B$ preceding the two potentially simultaneous emissions and an outgoing section $\mathcal S_{\rm out}$ following them but preceding the next emissions. The map $E_\eps:\mathcal S_B\to\mathcal S_{\rm out}$ obtained by traversing one paired episode and then following the free flow to $\mathcal S_{\rm out}$ is the \emph{paired-event transition}. Its restrictions to the two event orders will be denoted $E_\eps^+$ (oscillator~1 emits first) and $E_\eps^-$ (oscillator~2 emits first). The local switching set is the boundary between these two orders. The sections and their compactly contained coordinate domains are fixed independently of $\eps$.

\begin{definition}[Regular paired-event transition]\label{def:pairedregular}
The paired-event transition is \emph{regular} if, on a fixed neighbourhood of the simultaneous-emission state and for all sufficiently small $|\eps|$, it is piecewise $C^1$ and, in fixed incoming and outgoing coordinates,
\begin{equation}\label{eq:pairedReg}
 \|E_\eps-E_0\|_\infty+\Lip(E_\eps-E_0)\le C|\eps|.
\end{equation}
Since $E_0$ is smooth, \eqref{eq:pairedReg} implies that $E_\eps$ is Lipschitz, and in particular continuous across the switching set. Since $E_0$ is a local diffeomorphism between transverse sections, after shrinking the neighbourhood it has a lower Lipschitz bound $c_0>0$, and \eqref{eq:pairedReg} gives $\|E_\eps(z)-E_\eps(z')\|\ge(c_0-C|\eps|)\|z-z'\|$; thus $E_\eps$ is bi-Lipschitz onto its image for small $|\eps|$. The one-sided derivatives are not required to coincide.
\end{definition}

The regularity condition does not require preservation of the emission sections: Example~C in Section~\ref{sec:analytic} provides an explicit regular transition for which neither emission section is invariant.
Conversely, \eqref{eq:pairedReg} is a restrictive compatibility requirement, not a consequence of smooth impulses. Example~A in Section~\ref{sec:analytic} shows that it fails throughout an open three-parameter family of smooth impulse laws.

On the incoming section $\mathcal S_B$ constructed in the proof of the next lemma, let $t_i(z)$ be the two \emph{free} first-hit times of $\Sigma_i$ from $z\in\mathcal S_B$ and $u(z):=t_2(z)-t_1(z)$. The \emph{switching set} is $\{u=0\}$ and, for a fixed small $u_0>0$, the \emph{switching flow box} is $\{z\in\mathcal S_B:|u(z)|<u_0\}$. Let $s_{12}\ge0$ (for $u\ge0$) and $s_{21}\ge0$ (for $u\le0$) be the time between the two recorded emissions, $w_i^\eps$ the state of receiver $i$ immediately before its kick, and $w_i^0$ the corresponding receiver state for the free first-hit times.

\begin{lemma}[Uniform event-time and phase bookkeeping]\label{lem:pairedepisode}
Under Hypotheses~\ref{hyp:cycles} and~\ref{hyp:equalperiods} and the impulse law of Section~\ref{sec:setting}, with emissions counted as in Definition~\ref{def:singleemission}, the switching set is a smooth hypersurface independent of $\eps$ and each episode records exactly one emission of each oscillator. Uniformly on $\mathcal S_B$, including as $u\to0$,
\begin{equation}\label{eq:uniformTimes}
 |s_{12}-u|\le C|\eps|\ \ (u\ge0),
 \qquad
 |s_{21}+u|\le C|\eps|\ \ (u\le0),
 \qquad
 s_{12}=s_{21}=0\ \ (u=0),
\end{equation}
\begin{equation}\label{eq:uniformReceivers}
 \|w_i^\eps-w_i^0\|\le C|\eps|.
\end{equation}
Outside the switching flow box the order of the emissions is fixed, and no kick carries its receiver onto, across or back across its section.
\end{lemma}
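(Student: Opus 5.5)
The proof starts by fixing the incoming section $\mathcal S_B$ concretely. Take the simultaneous-emission state $(p_1,p_2)$ and flow it backward by a fixed time $\sigma$, with $\tau_*<\sigma$ but $\sigma$ smaller than a fraction of $T$. Let $\mathcal S_B$ be a small three-dimensional section of the product flow $\Phi_t$ through $(\phi^1_{-\sigma}(p_1),\phi^2_{-\sigma}(p_2))$, for instance $\Sigma_1^{B}\times V_2'$ with $\Sigma_1^B$ a transverse segment. On $\mathcal S_B$ the free first-hit times $t_1,t_2$ are $C^2$ by the implicit-function form of Section~\ref{sec:sectionsimpulses}, and both are close to $\sigma$. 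By the phase identity~\eqref{eq:phaseidentity}, $u=t_2-t_1$ is, up to a $C^2$ correction coming from the sections not being isochrons, a function of the two states alone. It does not involve $\eps$, so $\{u=0\}$ is independent of $\eps$. It is a smooth hypersurface because $\dd u\neq0$: moving $x_2$ along $F_2$ changes $t_2$ at unit rate and leaves $t_1$ unchanged.

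Next comes the event bookkeeping. Fix $u\ge0$; the case $u\le0$ is symmetric. Before any kick the trajectory is free, so the first emission is that of oscillator~1 at time $t_1$, provided $u>0$ or the two emissions are exactly simultaneous. Its kick $K_2^\eps$ displaces $x_2$ by $O(\eps)$ at the state $w_2^\eps=w_2^0=\phi^2_{t_1}(x_2)$, so~\eqref{eq:uniformReceivers} holds trivially for the first receiver. There are then two subcases, according to where oscillator~2 lies relative to $\Sigma_2$ at that moment.

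\emph{Far from the section}, $g_2(w_2^0)$ is bounded away from zero. The kicked point then still hits $\Sigma_2$ transversally, and by the implicit function theorem applied to $g_2(\phi^2_s(K_2^\eps(w_2^0)))=0$ the hit time differs from $u$ by $O(\eps)$.

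\emph{Near the section}, $|g_2(w_2^0)|\le C|\eps|$. If the kick carries the receiver onto or across $\Sigma_2$, Definition~\ref{def:singleemission} records the emission of oscillator~2 at that instant, so $s_{12}=0$ and $u=O(\eps)$ by transversality. Otherwise oscillator~2 remains on the incoming side and hits after a time $O(\eps)$-close to $u$. In that case $g_2\circ\phi_s^2$ has derivative bounded below uniformly, so its zero moves by at most $C|\eps|$.

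In every subcase $|s_{12}-u|\le C|\eps|$. The second receiver state $w_1^\eps=\phi^1_{s_{12}}(p_1\text{-side state})$ then differs from $w_1^0$ by $C|s_{12}-u|+0$, because oscillator~1 was not kicked before; this gives~\eqref{eq:uniformReceivers}. At $u=0$ both emissions are recorded together and $s_{12}=s_{21}=0$ by the simultaneous rule.

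Exactly one emission per oscillator follows from three facts. Recrossings occur within $O(\eps)<\tau_*$ of the recorded emission and are ignored. The next natural crossing is at least about $T-\tau_*$ later, beyond $\mathcal S_{\rm out}$. The kick to the emitter of the second event arrives after its own emission, so it cannot trigger a second emission, since eligibility requires $\tau_*$ to have elapsed. The last claim of the lemma also follows from the estimates: outside the switching flow box, $|u|\ge u_0$, and the receiver at kick time satisfies $|g_i|\ge c\,u_0$ uniformly. For $|\eps|\ll u_0$ an $O(\eps)$ kick can therefore neither reach the section nor reverse an already completed crossing, and the order of events is that of the free times.

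I expect the main obstacle to be uniformity as $u\to0$, specifically the subcase in which the kick lands the receiver $O(\eps)$-close to $\Sigma_i$ on the incoming side, or pushes it back across after a crossing. There the constant $C$ must be independent of $u$. The plan is to handle this by a single estimate using only $\inf\,\dd g_i(F_i)>0$ on a fixed neighbourhood and $\|K_i^\eps-\id\|_{C^0}\le C|\eps|$, without resorting to the implicit function theorem near $u=0$.
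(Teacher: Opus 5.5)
Your proposal is correct and is essentially the paper's argument: free first-hit times fix the first emission and an $\eps$-independent switching set, transversality plus the $O(|\eps|)$ size of the kicks gives $|s_{12}-u|\le C|\eps|$ uniformly up to $u=0$ (with $w_1^\eps$ differing from $w_1^0$ only by a free flight of length $|s_{12}-u|$), and the convention suppresses recrossings. The paper merges your near/far subcases into one signed-time hitting equation whose solution $\widehat s_{12}$ satisfies $|\widehat s_{12}-u|\le C|\eps|$, with $s_{12}=\max\{0,\widehat s_{12}\}$, and it takes $\mathcal S_B$ to be the mean-phase level $\chi=-\chi_0$ with $|\psi|<3\kappa$ instead of your product section; that substitution is harmless here provided your $V_2'$ covers the range of phase differences later needed for chart $B^+$ in Lemma~\ref{lem:gluing}.
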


\begin{proof}
Take real lifts $\theta_i$ of the asymptotic phases on a neighbourhood of the arcs $|\theta_i|\le3\pi/4$ of the cycles, continued along trajectories, put $\psi=\theta_2-\theta_1$ and $\chi=(\theta_1+\theta_2)/2$, and choose $0<2\kappa<\chi_0<\pi/2$. An incoming section is the level $\chi=-\chi_0$ with $|\psi|<3\kappa$, restricted to a small transverse disc. It is transverse to the product free flow because $\dot\chi=\omega$. The larger interval leaves room for the buffered extension of the incoming chart constructed in the proof of Lemma~\ref{lem:gluing}. The free hits of $\Sigma_i$ have $C^2$ times $t_i(z)$ by the implicit function theorem and the positive lower bound for $\dd g_i(F_i)$ on compact subsegments. On the product of the two cycles,
\[
 t_1=(\chi_0+\psi/2)/\omega,\qquad
 t_2=(\chi_0-\psi/2)/\omega,\qquad
 u=-\psi/\omega.
\]
Thus $\dd u\ne0$ near the simultaneous state after reducing the transverse disc. No kick occurs before the first of the two emissions, so $u=0$ is independent of $\eps$. Since $3\kappa/2<\chi_0$, the two emissions occur a fixed positive time after the level $\chi=-\chi_0$ and before the next level $\chi=2\pi-\chi_0$.

At $u=0$ the two free hits coincide; by Definition~\ref{def:singleemission} both emissions are recorded at that instant, the impulses act as in~\eqref{eq:simultaneous}, and $s_{12}=s_{21}=0$, so the estimates below hold trivially there. At $u=0$ the pre-impulse receiver states coincide with their free counterparts, $w_i^\eps=w_i^0$. Suppose $u>0$, so oscillator 1 emits first. Its kick displaces receiver 2 by $O(|\eps|)$ in $C^0$. In a local signed-time extension, the transverse hitting equation
\[
 g_2\bigl(\phi^2_{\widehat s_{12}}K_2^\eps\phi^2_{t_1(z)}(x_2)\bigr)=0
\]
has a unique $C^2$ solution satisfying $\widehat s_{12}(z,0)=u(z)$ and $|\widehat s_{12}(z,\eps)-u(z)|\le C|\eps|$; the derivative with respect to the hitting time is bounded away from zero. If $\widehat s_{12}>0$, the receiver remains incoming after the kick and emits at that hit. If $\widehat s_{12}\le0$, it is carried onto or through its section by the kick and, by Definition~\ref{def:singleemission}, emits immediately. Hence
\[
 s_{12}=\max\{0,\widehat s_{12}\},\qquad
 |s_{12}-u|\le|\widehat s_{12}-u|\le C|\eps|,
\]
since $u\ge0$. The other order is identical, with $-u$ in place of $u$. Subsequent genuine emissions are a fixed positive time away, while rapid recrossings, if any, are suppressed by the convention. This establishes the event count. The actual reception times differ from their free counterparts by at most $C|\eps|$; smoothness and boundedness of the free flows, together with the $O(|\eps|)$ size of the kicks, give~\eqref{eq:uniformReceivers}.

Outside the switching flow box, $|u|\ge u_0$. Suppose $u\ge u_0$. The first kick then reaches receiver 2 at least a time $u_0$ before its free hit of $\Sigma_2$, and the second kick reaches oscillator 1 a time $s_{12}\ge u_0-C|\eps|$ after its emission. Both kicks are of size $O(|\eps|)$, so for small $|\eps|$ neither carries its receiver onto, across or back across its section. Hence $s_{12}=\widehat s_{12}$ is the $C^2$ hitting time given by the implicit function theorem as in Section~\ref{sec:sectionsimpulses}, and the estimates above hold there as well. The case $u\le-u_0$ is identical.
\end{proof}

\paragraph{The return cylinder.}
A first return to $\mathcal S_1=\Sigma_1\times V_2$ is smooth where the order of the emissions is fixed, but need not be continuous across its cut at simultaneous emission. We instead use two \emph{observation sections} and identify observations of the same hybrid trajectory. Their overlap will be chosen strictly away from switching.

Two observation sections suffice (Figure~\ref{fig:returncylinder}).

\begin{figure}[htbp]
	\centering
	\begin{tikzpicture}[>=Latex, every node/.style={font=\small}]
		\def\k{24}
		\draw[gray] (0,0) circle (1.5);
		\draw[line width=1.6pt] (\k:1.9) arc (\k:360-\k:1.9);
		\draw[line width=1.6pt] (-2*\k:2.3) arc (-2*\k:2*\k:2.3);
		\draw[line width=4pt, gray!70] (\k/2:2.3) arc (\k/2:0.75*\k:2.3);
		\fill (0:1.5) circle (1.6pt);
		\node[left] at (0:1.45) {$\psi=0$};
		\foreach \a in {\k,2*\k,-\k,-2*\k} \draw[gray, dashed] (\a:1.5) -- (\a:2.5);
		\node at (180:2.45) {A};
		\node at (0:2.65) {B};
		\node[right] at (\k:2.55) {$\kappa$};
		\node[right] at (2*\k:2.55) {$2\kappa$};
		\node[right] at (-\k:2.55) {$-\kappa$};
		\node[right] at (-2*\k:2.55) {$-2\kappa$};
		\draw[<-] (0.62*\k:2.42) -- (0.62*\k:3.6) node[right] {collar $\mathcal L$};
		\node[gray] at (135:1.15) {$\mathcal C_0$};
		\fill (100:1.5) circle (1.3pt) (215:1.5) circle (1.3pt);
		\node[below right] at (215:1.45) {\footnotesize zeros of $H$};
	\end{tikzpicture}
	\caption{The two charts of the return cylinder over the circle $\mathcal C_0$ of phase differences: chart A, a fixed-order section over $\kappa<\psi<2\pi-\kappa$, and chart B, which contains simultaneous emission $\psi=0$. They overlap in two buffered strips near $\psi=\pm\kappa$, strictly away from switching, whose enlarged domains absorb the $O(\eps)$ phase change of one return; the coordinates are interpolated in the collar $\mathcal L$ of chart B. When $H(0)\neq0$, $\kappa$ is chosen so that all zeros of $H$ lie in chart A.}
	\label{fig:returncylinder}
\end{figure}

\begin{lemma}[Compatible observations]\label{lem:gluing}
Under the single-emission convention and the regularity of Definition~\ref{def:pairedregular}, for sufficiently small $|\eps|$ there are a smooth fixed-order return $\mathcal P_\eps$ on a section $\mathcal S_A$, a paired-episode return $\mathcal R_{B,\eps}$ on a section $\mathcal S_B$, and a $C^1$ gluing map $\sigma_\eps$, defined on two fixed-order overlap strips and a local diffeomorphism there, such that
\begin{equation}\label{eq:returnconjugacy}
 \mathcal P_\eps\circ\sigma_\eps
 =\sigma_\eps\circ\mathcal R_{B,\eps}
\end{equation}
wherever both sides are defined.
\end{lemma}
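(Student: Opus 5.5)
The plan is to construct the two charts explicitly, define the returns on them, and build $\sigma_\eps$ as a holonomy of the hybrid flow that crosses no emission event. The conjugacy~\eqref{eq:returnconjugacy} then becomes the statement that the hybrid flow is a semiflow: observing a trajectory on $\mathcal S_B$ and then transporting it to $\mathcal S_A$ records the same sequence of events as transporting first and then returning.

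\emph{Chart A.} Take $\mathcal S_A:=\mathcal S_1=\Sigma_1\times V_2$, restricted to a tube $\mathcal N_{J_A,r_*}$ with $J_A=[\kappa/2,2\pi-\kappa/2]$. By Proposition~\ref{prop:returnexp} the fixed-order return $\mathcal P_\eps$ is $C^2$ there for small $|\eps|$, since $d_{J_A}>0$.

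\emph{Chart B.} Take the incoming section $\mathcal S_B=\{\chi=-\chi_0,\ |\psi|<3\kappa\}$ from the proof of Lemma~\ref{lem:pairedepisode}. Let $\mathcal R_{B,\eps}$ be the map that traverses one paired episode by $E_\eps$, follows the free product flow from $\mathcal S_{\rm out}$ to the next level $\chi=2\pi-\chi_0$, and identifies that level with $\mathcal S_B$ through the lift of $\chi$. This last identification is the free flow, because $\dot\chi=\omega$ and no emission occurs between episodes. Regularity (Definition~\ref{def:pairedregular}) makes $\mathcal R_{B,\eps}$ bi-Lipschitz and piecewise $C^1$.

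\emph{The gluing map.} On the two strips $\kappa/2<\pm\psi<3\kappa$ of $\mathcal S_B$, choose $u_0$ with $u_0\omega<\kappa/2$. Then $|u|\ge u_0$ there, and Lemma~\ref{lem:pairedepisode} says that the order of emission is fixed and no kick carries a receiver across its section. I would define $\sigma_\eps(z)$ as the first hit of $\mathcal S_1$ by the hybrid trajectory from $z$. On the strip $\psi>0$, oscillator~1 is ahead, and this hit occurs at the emission of oscillator~1 within the current episode, before the kick it emits. The transport between $\mathcal S_B$ and that event is then pure free flow, so $\sigma_\eps$ is the free holonomy. It is $C^2$, independent of $\eps$, and a local diffeomorphism by transversality.

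On the strip $\psi<0$, oscillator~2 emits first. The first hit of $\Sigma_1$ therefore comes after one kick from $K_1^\eps$. The map is then the composition of a free flight, $K_1^\eps$ and a $C^2$ hitting time. It is again $C^1$, and it is a local diffeomorphism because $K_1^\eps$ is a diffeomorphism onto its image and it is $O(\eps)$-close in $C^1$ to the free holonomy.

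\emph{The conjugacy.} For $z$ in a strip, $\sigma_\eps\circ\mathcal R_{B,\eps}(z)$ and $\mathcal P_\eps\circ\sigma_\eps(z)$ are both the state at the same emission of oscillator~1 along the same hybrid trajectory. Namely, it is the first emission of oscillator~1 after one complete paired episode, measured from the corresponding reference event. Under the single-emission convention the trajectory is uniquely determined, since the flow is deterministic and recrossings are suppressed. The event counts agree because Lemma~\ref{lem:pairedepisode} gives exactly one emission of each oscillator per episode. Hence the two sides are equal wherever both are defined.

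To make the strips invariant up to the $O(\eps)$ phase drift, I would shrink their domains by a buffer of width $C\eps$. Lemma~\ref{lem:phaseidentity} bounds that drift by $C|\eps|$. This guarantees that $\mathcal R_{B,\eps}(z)$ lands back in the domain of $\sigma_\eps$.

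The main obstacle is the bookkeeping on the $\psi<0$ strip, together with the claim that "first hit of $\mathcal S_1$" selects the same emission on both sides. Concretely, one must check that the index shift, by one emission of oscillator~1, is consistent on both strips and does not flip across $\psi=\pm3\kappa$. To settle this, I would track the real lift $\chi$ and define $\sigma_\eps$ as the hit with $\chi\in(-\chi_0,2\pi-\chi_0)$. The choice $3\kappa/2<\chi_0$ then places that emission strictly inside a single episode window on both strips.
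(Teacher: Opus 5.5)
Your construction is the paper's: the same two charts, the same paired return $\mathcal R_{B,\eps}$ from the level $\chi=-\chi_0$ to $\chi=2\pi-\chi_0$, and the same gluing map $\sigma_\eps$, namely transport to the next emission of oscillator~1, which is the free holonomy $\sigma_0$ on one strip and free flight to $\Sigma_2$, then $K_1^\eps$, then flight to $\Sigma_1$ on the other. The conjugacy is read off, as in the paper, as two observations of the same hybrid trajectory at the same emission of oscillator~1, with a buffer absorbing the $O(\eps)$ phase shift. There is one labelling slip: since $\psi=\Theta_2-\Theta_1$ and $u=-\psi/\omega$, it is oscillator~2 that is ahead and emits first on $\psi>0$, so your strips are swapped ($\sigma_\eps=\sigma_0$ on $\psi<0$, and $K_1^\eps$ enters on $\psi>0$), but this does not affect the argument.
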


\begin{proof}
We specify the domains to control iterations that change charts. Choose $\kappa\in(0,\pi/8)$, then $\chi_0\in(2\kappa,\pi/2)$ and $0<\zeta<\kappa/20$. The fixed-order chart A is the part of $\mathcal S_1$ with $\kappa<\psi<2\pi-\kappa$; chart B is the incoming section $\chi=-\chi_0$ with $|\psi|<2\kappa$. In either chart use a sufficiently small transverse disc. The chart A coordinates are $X_A=(\psi,a_1,a_2)$, as in~\eqref{eq:tubecoordinates}. Its coordinate map extends through $\psi=0$, although its physical fixed-order return need not.

Use enlarged phase intervals
\[
 A^+:\ \kappa-3\zeta<\psi<2\pi-\kappa+3\zeta,
 \qquad
 B^+:\ |\psi|<2\kappa+3\zeta
\]
and enlarged transverse discs. The inner charts have transverse radius $\ell_0$ and the extensions radius $\ell_1>\ell_0$. All event times and return formulas are defined on neighbourhoods of the closed inner domains, by Lemma~\ref{lem:pairedepisode} and the fixed-order hitting-time theorem. The image of either inner chart under one return stays in its enlarged chart for $|\eps|$ small: the angular displacement is $O(|\eps|)$, since there are two kicks of size $O(|\eps|)$ and free flight preserves $\psi$, while the transverse images remain in the larger disc after reducing $\ell_0$. Thus an iteration that leaves an inner chart can still be evaluated in its extension. Chart B and its extension $B^+$ lie in the incoming section of Lemma~\ref{lem:pairedepisode}, since $2\kappa+3\zeta<3\kappa$. Fix the constant $u_0$ of that lemma so small that $u_0<\kappa/(8\omega)$ and that, after reducing the transverse discs, the set $\{|u|<2u_0\}$ lies in the neighbourhood of Definition~\ref{def:pairedregular}. Since $u=-\psi/\omega$ on the product of the cycles, the transverse discs can be reduced so that $|u+\psi/\omega|<\kappa/(8\omega)$ on the enlarged B-domain. The strips $\kappa/4<|\psi|<2\kappa+3\zeta$ therefore satisfy $|u|>\kappa/(8\omega)>u_0$ and lie outside the switching flow box, where the order of the emissions is fixed.

On $\mathcal S_B$, the local return $\mathcal R_{B,\eps}$ follows the trajectory from the lifted level $\chi=-\chi_0$ to its next level $\chi=2\pi-\chi_0$. The two emissions lie a fixed positive time away from the entry and exit levels. Since the kicks change $\chi$ by only $O(|\eps|)$, this return traverses exactly one paired episode. Near switching, where $|u|<2u_0$, it is the regular transition followed by a smooth free flight; where $|u|\ge u_0$ it is a smooth fixed-order map, by Lemma~\ref{lem:pairedepisode}. It is therefore continuous, piecewise $C^1$, bi-Lipschitz, and $O(|\eps|)$-close in Lipschitz norm to its unperturbed counterpart. On chart A the return $\mathcal P_\eps$ is the smooth fixed-order map of Section~\ref{sec:localreturn}.

For $\kappa/4<|\psi|<2\kappa+3\zeta$, define $\sigma_\eps$ by transporting an observation on $\mathcal S_B$ to the \emph{next} emission of oscillator 1, immediately before the event. If $\psi<0$, oscillator 1 emits first and $\sigma_\eps=\sigma_0$ is a free-flow hitting map. If $\psi>0$, oscillator 2 emits first and $\sigma_\eps$ is the free flight to $\Sigma_2$, followed by $K_1^\eps$ and the free flight to $\Sigma_1$. On both overlap strips the event order is fixed uniformly in $\eps$ and in the transverse coordinates, once the latter have been restricted. Consequently $\sigma_\eps$ is a local diffeomorphism and
\begin{equation}\label{eq:sigmaclose}
 \|\sigma_\eps-\sigma_0\|_{C^1}\le C|\eps|.
\end{equation}
The maps in~\eqref{eq:returnconjugacy} observe the same trajectory at the same next emission of oscillator 1: the left side starts at the first such emission and takes one fixed-order return, whereas the right side takes one paired return before observing that next emission. Because the phase shift in one return is $O(|\eps|)$, their common compact overlap can be reduced slightly so that both compositions are defined there for small $|\eps|$. The equality is exact, including the transverse components.
\end{proof}

\begin{lemma}[A global return]\label{lem:returncylinder}
Under the assumptions of Lemma~\ref{lem:gluing}, $\sigma_\eps$ is a diffeomorphism onto its image where the charts are glued, and the quotient $\mathcal M_\eps$ of the two sections by $\sigma_\eps$ has coordinates $\mathcal X_\eps:\mathcal M_\eps\to\Sone\times B_\ell(0)$, for some $\ell>0$, with uniformly bi-Lipschitz $C^1$ coordinate changes and with angular coordinate equal to the physical relative phase on chart A. In these coordinates the two returns descend to a continuous, bi-Lipschitz, piecewise $C^1$ return $\mathcal R_\eps=(Q_\eps,A_\eps)$ on a smaller product tube, with
\begin{align}
 &\|\mathcal R_\eps-\mathcal R_0\|_\infty
 +\Lip(\mathcal R_\eps-\mathcal R_0)=O(|\eps|),\label{eq:globalLipclose}\\
 &Q_0(\vartheta,a)=\vartheta,\qquad A_0(\vartheta,0)=0,\qquad
 D_a A_0(\vartheta,0)=\diag(\rho_1,\rho_2).\label{eq:uncoupledGlobal}
\end{align}
\end{lemma}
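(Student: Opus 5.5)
The plan is to build $\mathcal M_\eps$ by gluing chart A to chart B along the two overlap strips. First I will show that the unperturbed quotient is already a product tube; then I will treat the $\eps$-dependence as an $O(|\eps|)$ perturbation in Lipschitz norm.

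\emph{Step 1: $\sigma_\eps$ is injective.} On each overlap strip $\kappa/4<\pm\psi<2\kappa+3\zeta$ the map $\sigma_0$ is a free-flow hitting map between transverse sections of the product flow. It preserves $\psi$, because free flight preserves the relative phase by Lemma~\ref{lem:phaseidentity}. It is injective, since distinct points of $\mathcal S_B$ lie on distinct free trajectories that meet $\mathcal S_1$ once in the relevant time window. Hence $\sigma_0$ is a $C^1$ diffeomorphism onto its image, with inverse bounded in $C^1$. By~\eqref{eq:sigmaclose} and the inverse function theorem, $\sigma_\eps$ is a diffeomorphism onto its image on a slightly shrunk compact strip for small $|\eps|$. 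Its inverse is $C^1$-close to $\sigma_0^{-1}$. The two strips have disjoint images near $\psi=\pm\kappa$, so the quotient $\mathcal M_\eps$ is a Hausdorff topological cylinder.

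\emph{Step 2: coordinates.} On chart A take $X_A=(\psi,a_1,a_2)$. On chart B take the physical coordinates $X_B=(\psi,a)$ on $\mathcal S_B$, where $\psi=\theta_2-\theta_1$ and $a$ is a transverse coordinate. In the collar $\mathcal L$, i.e.\ on the strips, interpolate. Fix a smooth cutoff $\beta(\psi)$ equal to $1$ for $|\psi|\le\kappa/2$ and to $0$ for $|\psi|\ge 3\kappa/2$. On B set
\[
\mathcal X_\eps=\beta X_B+(1-\beta)\,X_A\circ\sigma_\eps ,
\]
and on A set $\mathcal X_\eps=X_A$. This is well defined on the quotient because the formula equals $X_A\circ\sigma_\eps$ where $\beta=0$. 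Since $X_A\circ\sigma_0$ and $X_B$ agree in the angular component, the interpolation is the identity in $\psi$ at $\eps=0$. It differs only in the transverse component, and there it is an interpolation between two $C^1$ charts with invertible transverse derivative. After composing $X_B$ on B with a fixed linear change making the transverse derivatives of the two charts agree along $a=0$, this interpolation is a diffeomorphism near $a=0$ on a product tube $\Sone\times B_\ell(0)$. For $\eps\neq0$, \eqref{eq:sigmaclose} makes $\mathcal X_\eps$ a $C^1$-small perturbation of $\mathcal X_0$, hence still a diffeomorphism with uniformly bi-Lipschitz coordinate changes. The angular coordinate is physical on A because there $\mathcal X_\eps=X_A$.

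\emph{Step 3: the descended return.} On A define $\mathcal R_\eps=\mathcal X_\eps\circ\mathcal P_\eps\circ\mathcal X_\eps^{-1}$, and on B use $\mathcal R_{B,\eps}$ conjugated in the same way. The two definitions agree on overlaps by~\eqref{eq:returnconjugacy}. The enlarged domains $A^+,B^+$ ensure that images of the inner domains are defined in some chart. Continuity, bi-Lipschitz and piecewise $C^1$ properties transfer from $\mathcal P_\eps$ (smooth) and $\mathcal R_{B,\eps}$ (Lemma~\ref{lem:gluing}), because the coordinate changes are $C^1$ with uniform bounds. For~\eqref{eq:globalLipclose}, each piece differs from its $\eps=0$ version by $O(|\eps|)$ in Lipschitz norm: on A by smoothness in $\eps$, and on B by~\eqref{eq:pairedReg}. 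The coordinate maps also differ by $O(|\eps|)$ in $C^1$. A composition of uniformly Lipschitz maps, each $O(|\eps|)$-close in $\|\cdot\|_{\rm Lip}$ to its unperturbed version, is $O(|\eps|)$-close, as one sees by telescoping.

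\emph{Step 4: the unperturbed return.} At $\eps=0$ the relative phase is invariant (Lemma~\ref{lem:phaseidentity}), and $\mathcal X_0$ preserves it, so $Q_0(\vartheta,a)=\vartheta$. The product of the cycles is fixed, giving $A_0(\vartheta,0)=0$. The transverse derivative is conjugate to the uncoupled one-period linearization. In A-coordinates this is $\diag(\rho_1,\rho_2)$; in the interpolated coordinates it stays diagonal provided the transverse charts have been normalised along $a=0$ to respect the splitting into oscillator~1 and oscillator~2 displacements. I would enforce this normalisation in Step 2.

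The main obstacle is Step 2: making the interpolated chart simultaneously a diffeomorphism, physical on A, and preserving the exact form~\eqref{eq:uncoupledGlobal}. I would handle this by interpolating only the transverse component while keeping $\psi$ fixed, and by matching $D_a$ along the zero section before cutting off.
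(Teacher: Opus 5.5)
Your architecture (glue along $\sigma_\eps$, interpolate in a collar, descend the returns via \eqref{eq:returnconjugacy}, then estimate) is the paper's, but Step~2 fails as written. A point $z\in B^+$ is identified with $\sigma_\eps(z)$ as soon as $\sigma_\eps(z)\in A^+$, that is, for $|\psi(z)|>\kappa-3\zeta-O(|\eps|)$. Your cutoff $\beta$, however, vanishes only for $|\psi|\ge3\kappa/2$. On the band $\kappa-3\zeta<|\psi|<3\kappa/2$ your B-coordinate $\beta X_B+(1-\beta)X_A\circ\sigma_\eps$ differs from the A-coordinate $X_A(\sigma_\eps z)$ of the same quotient point unless $X_B=X_A\circ\sigma_\eps$ there. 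So ``well defined because the formula equals $X_A\circ\sigma_\eps$ where $\beta=0$'' does not cover all identified pairs, and $\mathcal X_\eps$ is not a function on $\mathcal M_\eps$, already at $\eps=0$. You have conflated the collar with the whole overlap strip. The cutoff's transition must lie where $\sigma_\eps$ is defined but nothing is yet identified, namely $\kappa/4<|\psi|<\kappa-3\zeta-O(|\eps|)$. This is exactly why the paper imposes $\zeta<\kappa/20$ and lets its cutoff rise between $\kappa/2$ and $3\kappa/4$.

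The difficulty you single out as the main obstacle is self-inflicted. With an independent transverse chart on $\mathcal S_B$, a convex combination of two charts need not be invertible. A \emph{fixed} linear change cannot in general match transverse derivatives that depend on $\psi$. You would need a $\psi$-dependent normalisation, or the remark that, with $a_i=r_i(x_i)$ on both sections, the transverse derivative of $X_A\circ\sigma_0$ along $a=0$ is diagonal with positive entries; that remark also preserves $\diag(\rho_1,\rho_2)$.

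The paper avoids all of this by setting $X_B^0:=X_A\circ\sigma_0$. This is legitimate because at $\eps=0$ there is no switching, and both $\sigma_0$ and $X_A$ continue smoothly through $\psi=0$. Only the $O(\eps)$ correction $D_\eps=X_A\circ\sigma_\eps\circ(X_B^0)^{-1}-\id$ is then cut off. The map $\id+\eta D_\eps$ is injective on the convex coordinate domain because $\Lip(\eta D_\eps)=O(|\eps|)<1$. Global injectivity of the glued coordinates then reduces to injectivity of $X_A$. Moreover \eqref{eq:uncoupledGlobal} is automatic, since at $\eps=0$ the return in these coordinates is the uncoupled A-return. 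With $X_B=X_A\circ\sigma_0$ and $\eta=1-\beta$, your formula is exactly the paper's.

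Two smaller points. Your telescoping claim is false for merely Lipschitz unperturbed factors: $|x+\eps|-|x|$ has Lipschitz constant $2$. It holds here only because the unperturbed factors are $C^2$. Also, the Lipschitz estimate for pairs of points in different charts still needs the patch and Lebesgue-number argument.
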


\begin{proof}
We use the charts, their buffered extensions $A^+$ and $B^+$, and the gluing map $\sigma_\eps$ constructed in the proof of Lemma~\ref{lem:gluing}. The coordinate map $X_A$ on $\mathcal S_1$ has a smooth continuation to phases near zero. Define $X_B^0=X_A\circ\sigma_0$ on $B^+$, where $\sigma_0$ is the free-flow map to $\mathcal S_1$, defined also at $\psi=0$. Because free flow preserves the relative phase, the angular component of $X_B^0$ is precisely $\psi$. Choose the transverse domains so that the coordinate images of both enlarged charts are products of their indicated phase intervals and a transverse disc. On the positive fixed-order strip $\psi>\kappa/4$, put
\[
 D_\eps=X_A\circ\sigma_\eps\circ(X_B^0)^{-1}-\id,
 \qquad \|D_\eps\|_{C^1}\le C|\eps|.
\]
Choose a smooth cutoff $\eta$ with $\eta=0$ for $\psi\le\kappa/2$ and $\eta=1$ for $\psi\ge 3\kappa/4$, and extend $d_\eps:=\eta D_\eps$ by zero where $\eta=0$. Then $d_\eps$ is defined on the entire enlarged B-coordinate domain and
\begin{equation}\label{eq:chartcorrection}
 \|d_\eps\|_{C^1}\le C|\eps|.
\end{equation}
Set $X_B^\eps=(\id+d_\eps)\circ X_B^0$ and retain $X_A$ on chart A. The quotient $\mathcal M_\eps$ identifies $z\in B^+$ with $\sigma_\eps(z)$ whenever $\sigma_\eps(z)\in A^+$. Since $\sigma_\eps$ changes the phase difference by $O(|\eps|)$, and not at all for $\psi<0$, an identified point $z$ satisfies either $\psi(z)>\kappa-3\zeta-O(|\eps|)$ or $\psi(z)<-\kappa+3\zeta$. In the first case $\eta(\psi(z))=1$ for small $|\eps|$, because $\zeta<\kappa/20$ gives $\kappa-3\zeta>3\kappa/4$; this is the reason for the choice of $\zeta$. Hence $X_B^\eps(z)=X_A(\sigma_\eps(z))$. In the second case $\eta=0$ and $\sigma_\eps=\sigma_0$, so the same equality holds. Thus the two coordinate maps agree exactly at every identified pair of points.

On the convex B-coordinate domain the map $\id+d_\eps$ is injective and bi-Lipschitz, since $\Lip(d_\eps)<1$. Both chart maps are individually injective; $X_A$ is injective on the whole tube in $\mathcal S_1$, including its continuation to phases near zero. Suppose that $X_B^\eps(z)=X_A(y)$ with $z\in B^+$ and $y\in A^+$. Then $\psi(y)\in(\kappa-3\zeta,2\pi-\kappa+3\zeta)$, while the angular coordinate of $X_B^\eps(z)$ differs from $\psi(z)$ by $O(|\eps|)$, and by nothing where $\eta=0$. Hence either $\psi(z)>\kappa-3\zeta-O(|\eps|)>3\kappa/4$, or $\psi(z)<-\kappa+3\zeta$. In both cases $X_B^\eps(z)=X_A(\sigma_\eps(z))$, as shown above, so $X_A(\sigma_\eps(z))=X_A(y)$, and injectivity of $X_A$ gives $\sigma_\eps(z)=y$: the two points are identified. The same identity shows that $\sigma_\eps$ is injective where the charts are glued. For $\psi<0$ it is the free-flow map $\sigma_0$, which is injective. For $\psi\ge3\kappa/4$ we have $X_B^\eps=X_A\circ\sigma_\eps$, and $X_B^\eps=(\id+d_\eps)\circ X_B^0$ is injective because both factors are; so $\sigma_\eps(z)=\sigma_\eps(z')$ implies $X_B^\eps(z)=X_B^\eps(z')$, hence $z=z'$. Being also a local diffeomorphism, $\sigma_\eps$ is a diffeomorphism onto its image there. Hence the quotient map is injective. Its restrictions are local homeomorphisms, so it is open, and its image contains a fixed smaller product neighbourhood of the central circle by compactness and $C^1$ proximity to the $\eps=0$ coordinates. This gives the coordinates $\mathcal X_\eps$, with the extended charts retained for return evaluations.

Equation~\eqref{eq:returnconjugacy} now makes the local returns a single return $\mathcal R_\eps$ on the quotient. In chart A the difference from the uncoupled return is $O(|\eps|)$ in $C^1$. In chart B it is $O(|\eps|)$ in Lipschitz norm by Definition~\ref{def:pairedregular}; the coordinate changes differ from their uncoupled counterparts by $O(|\eps|)$ in $C^1$. These estimates hold in finitely many patches covering the compact central circle. Nearby points lie in a common patch, by a Lebesgue-number argument; separated points are controlled by the uniform $O(|\eps|)$ supremum estimate. This proves~\eqref{eq:globalLipclose} on the smaller product tube.

At $\eps=0$ the phase difference is preserved on every free trajectory, so $Q_0(\vartheta,a)=\vartheta$. The central circle is pointwise fixed and the two transverse derivatives over one natural period have multipliers $\rho_i$; in the coordinates obtained from~\eqref{eq:tubecoordinates} they form $\diag(\rho_1,\rho_2)$, as stated. The uncoupled return is a diffeomorphism onto its image, and its restriction to a sufficiently small closed product tube has a uniform positive lower Lipschitz bound $c_0$. Thus~\eqref{eq:globalLipclose} implies
\[
 \|\mathcal R_\eps(z)-\mathcal R_\eps(z')\|
 \ge (c_0-C|\eps|)\|z-z'\|,
\]
which proves the claimed bi-Lipschitz property.
\end{proof}

\begin{lemma}[Global phase expansion without emission-section invariance]\label{lem:globalphaseexpansion}
Under the assumptions of Lemma~\ref{lem:returncylinder}, let $a_\eps:\Sone\to\R^2$ be Lipschitz with $\|a_\eps\|_\infty+\Lip(a_\eps)\le C|\eps|$. For a consistent lift of the angular component of the return,
\begin{equation}\label{eq:globalphaseexp}
 Q_\eps(\vartheta,a_\eps(\vartheta))
 =\vartheta+\eps H(\vartheta)+r_\eps(\vartheta),
 \qquad \|r_\eps\|_\infty=O(\eps^2),
\end{equation}
and $\Lip(r_\eps|_J)=o(|\eps|)$ on every compact interval $J\subset(0,2\pi)$ contained in the fixed-order chart A.
\end{lemma}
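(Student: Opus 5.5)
The plan is to split the circle of phase differences into the fixed-order chart A and the switching chart B, and to treat the two estimates in~\eqref{eq:globalphaseexp} separately. The Lipschitz bound is claimed only inside chart A; the uniform $O(\eps^2)$ bound must hold everywhere, including through simultaneous emission. The key tool throughout is an exact phase bookkeeping along one paired episode, in the spirit of Lemma~\ref{lem:phaseidentity}. Free flight preserves $\psi$, so over any return the change of the physical relative phase is
\[
\bigl[\Theta_2(K_2^\eps(w_2^\eps))-\Theta_2(w_2^\eps)\bigr]-\bigl[\Theta_1(K_1^\eps(w_1^\eps))-\Theta_1(w_1^\eps)\bigr].
\]
Here $w_i^\eps$ is the pre-kick state of receiver $i$. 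This identity holds whatever the order of emissions, and at exact simultaneity by~\eqref{eq:simultaneous}.

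\emph{On chart A.} The angular coordinate is the physical relative phase, and $\mathcal R_\eps$ coincides with the smooth fixed-order return $\mathcal P_\eps$. Proposition~\ref{prop:returnexp} then gives $Q_\eps(\vartheta,a)=\vartheta+\eps H(\vartheta)+\mathcal E(\vartheta,a,\eps)$. Corollary~\ref{cor:remainder}, in the form~\eqref{eq:EcomposedLip} applied to the Lipschitz graph $a_\eps$, yields both $\|r_\eps\|_\infty=O(\eps^2)$ and $\Lip(r_\eps|_J)=o(\eps)$ for every compact $J$ in chart A. Compactness of $J\subset(0,2\pi)$ is what makes the constants in Proposition~\ref{prop:returnexp} uniform. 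Since the chart-B coordinate correction $d_\eps$ vanishes or is exact on the overlaps, this part is essentially a citation.

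\emph{On chart B.} Here I would not seek a Lipschitz estimate, only the uniform one. By~\eqref{eq:C2Taylor} each bracket above equals $\eps\,\dd\Theta_i(B_i)(w_i^\eps)+O(\eps^2)$. Lemma~\ref{lem:pairedepisode}, through~\eqref{eq:uniformReceivers}, gives $\|w_i^\eps-w_i^0\|\le C|\eps|$ uniformly, including as $u\to0$. The free receiver states $w_i^0$ differ from the on-cycle points $\gamma_2(\psi)$ and $\gamma_1(-\psi)$ by $O(\|a\|)=O(|\eps|)$ along the graph. Since $\dd\Theta_i(B_i)$ is $C^1$, hence Lipschitz on compacts, the physical phase change is $\eps H(\psi)+O(\eps^2)$.

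It then remains to pass from the physical $\psi$ to the chart coordinate $\vartheta$. By construction in Lemma~\ref{lem:returncylinder}, $X_B^\eps=(\id+d_\eps)\circ X_B^0$, and the angular part of $X_B^0$ is exactly $\psi$. The correction $d_\eps$ is $O(\eps)$ in $C^1$ but is applied at both the start and the end of the return. The difference of these corrections is $O(\eps)\cdot|\text{displacement}|+O(\eps)\cdot\|\Delta a\|$, and I expect this step to be the main obstacle. The displacement is $O(\eps)$; the transverse change along $a_\eps$ is also $O(\eps)$, by~\eqref{eq:globalLipclose} and $A_0(\vartheta,0)=0$. So the coordinate correction contributes $O(\eps^2)$, as does $H(\psi)-H(\vartheta)=O(\eps)$ times $\eps$. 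If this careful bookkeeping fails, the fallback is to note that $d_\eps$ vanishes for $\psi\le\kappa/2$, so that near switching $\vartheta=\psi$ exactly. On the collar $\kappa/2<\psi<3\kappa/4$ the order of emissions is fixed, so smooth $C^1$ estimates are available there.

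Combining the two charts by a consistent choice of lift, with the displacement $O(\eps)$ so that the lift is unambiguous, gives~\eqref{eq:globalphaseexp}.
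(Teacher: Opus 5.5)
Your proposal is correct and follows essentially the paper's proof: the exact paired phase identity combined with Lemma~\ref{lem:pairedepisode} gives the uniform $O(\eps^2)$ bound, the estimate $\Lip(d_\eps)\,\|w'-w\|=O(\eps^2)$ controls the chart-B coordinate correction, and the $G$-decomposition (equivalently Corollary~\ref{cor:remainder}) gives the Lipschitz bound on chart A. The only difference is organisational: you take the uniform bound on chart A directly from Proposition~\ref{prop:returnexp} rather than from the paired phase identity. This is harmless, because the closed chart-A phase interval is compact in $(0,2\pi)$ and the two coordinate maps agree exactly on the overlaps.
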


\begin{proof}
For local real lifts define
\[
 \Delta_i^\eps(x):=\Theta_i(K_i^\eps x)-\Theta_i(x),\qquad
 f_i(x):=\dd\Theta_i|_x(B_i(x)).
\]
The $C^2$ impulse assumption gives $\Delta_i^\eps(x)=\eps f_i(x)+O(\eps^2)$ uniformly on compact subsets. By~\eqref{eq:phaseidentity} the free flow preserves the physical phase difference $\psi=\Theta_2-\Theta_1$. Each recorded paired return contains one kick to each receiver, so its exact relative-phase increment is
\begin{equation}\label{eq:pairedPhaseIdentity}
 \psi'-\psi=\Delta_2^\eps(w_2^\eps)-\Delta_1^\eps(w_1^\eps),
\end{equation}
where $w_i^\eps$ is the receiver state immediately before its kick. By Lemma~\ref{lem:pairedepisode}, $w_i^\eps=w_i^0+O(|\eps|)$ uniformly on chart B, including where a forward kick causes an immediate emission. On the compact fixed-order chart A, the same estimate follows from the smooth hitting-time theorem. On the product of the cycles the free receiver states are $\gamma_2(\psi)$ and $\gamma_1(-\psi)$; when the transverse amplitude is $a$, their deviations from these states are $O(\|a\|)$ by smoothness. Consequently, uniformly in both charts,
\begin{equation}\label{eq:physicalphaseexp}
 \psi'-\psi
 =\eps\bigl[h_2(\psi)-h_1(-\psi)\bigr]
 +O(\eps^2+|\eps|\,\|a\|)
 =\eps H(\psi)+O(\eps^2+|\eps|\,\|a\|).
\end{equation}
Here the two physical phases refer to the beginning and end of the \emph{same representative return}. We now verify the change to global coordinates, including a return that changes chart.

For a representative $z$ in the inner B-domain, let $w=X_B^0(z)$ and $w'=X_B^0(\mathcal R_{B,\eps}z)$. These expressions are defined even if the image leaves the inner chart, because it belongs to $B^+$. Since the unperturbed B-return fixes the central circle, preserves the base phase and is Lipschitz on the tube,
\[
 \|X_B^0(\mathcal R_{B,0}z)-X_B^0(z)\|\le C\|a\|.
\]
The regularity assumption and the smooth final free flight imply
\[
 \|X_B^0(\mathcal R_{B,\eps}z)-X_B^0(\mathcal R_{B,0}z)\|
 \le C|\eps|.
\]
Thus, on a graph with $\|a\|\le C|\eps|$,
\begin{equation}\label{eq:chartmove}
 \|w'-w\|\le C(\|a\|+|\eps|)=O(|\eps|).
\end{equation}
Using~\eqref{eq:chartcorrection},
\begin{equation}\label{eq:chartOeps2}
 \|d_\eps(w')-d_\eps(w)\|
 \le\Lip(d_\eps)\|w'-w\|=O(\eps^2).
\end{equation}
The first component of~\eqref{eq:chartOeps2} is exactly the difference between the global angular increment and the physical phase increment in the B-representative. If the image passes from B to A, it is still in $B^+$ and its global coordinates computed with $X_B^\eps$ agree \emph{exactly} with $X_A\circ\sigma_\eps$ on the overlap. We have therefore not evaluated $d_\eps$ outside its domain.

For a representative in the inner A-domain, evaluate its fixed-order return in $A^+$. The angular component of $X_A$ is the physical phase $\psi$. If the image is to be represented in chart B, it lies in the overlap, where $X_A$ and $X_B^\eps$ agree exactly. In either case, on graphs of amplitude $O(|\eps|)$,
\[
 \vartheta'-\vartheta=(\psi'-\psi)+O(\eps^2).
\]
Also $\vartheta=\psi+O(|\eps|)$ in B by~\eqref{eq:chartcorrection}, and $\vartheta=\psi$ in A. Since $H\in C^1(\Sone)$, $\eps H(\psi)=\eps H(\vartheta)+O(\eps^2)$. Combined with~\eqref{eq:physicalphaseexp}, this proves~\eqref{eq:globalphaseexp}.

For the stronger estimate, take $J$ compactly contained in chart A. The local return is jointly $C^2$ there, and its phase displacement vanishes for $\eps=0$ for \emph{every} transverse coordinate. It therefore has the expansion
\[
 Q_\eps(\psi,a)-\psi=\eps G(\psi,a)+\widetilde{\mathcal E}(\psi,a,\eps),
 \quad G(\psi,0)=H(\psi),\quad
 \|\widetilde{\mathcal E}(\cdot,\cdot,\eps)\|_{C^1}=o(|\eps|).
\]
For a graph as in the statement, uniform continuity of $DG$ and $\Lip(a_\eps)=O(|\eps|)$ give
\[
 \Lip\bigl[G(\cdot,a_\eps(\cdot))-G(\cdot,0)\bigr]=o(1).
\]
Multiplying by $|\eps|$ and adding the remainder yields $\Lip(r_\eps|_J)=o(|\eps|)$.
\end{proof}

The lemma uses the regularity of the paired transition, but not invariance of $\Sigma_i$ under $K_i^\eps$.

\paragraph{Invariant circle and global locking.}
A Lipschitz invariant circle $\mathcal C$ is \emph{normally attracting in the metric sense} if, in a neighbourhood of $\mathcal C$, there are constants $C_*>0$ and $0<\lambda<1$ for which
\[
 \operatorname{dist}(\mathcal R^n z,\mathcal C)
 \le C_*\lambda^n\operatorname{dist}(z,\mathcal C)
\]
for every $z$ whose iterates remain in that neighbourhood.

\begin{theorem}[Global reciprocal invariant circle]\label{thm:globalcircle}
Under the assumptions of Lemma~\ref{lem:returncylinder}, there are constants $C_1,C_2>0$, independent of $\eps$, such that for all sufficiently small $|\eps|$ the global return $\mathcal R_\eps$ has a unique invariant graph $\mathcal C_\eps=\{(\vartheta,a_\eps(\vartheta)):\vartheta\in\Sone\}$ with $\|a_\eps\|_\infty\le C_1|\eps|$ and $\Lip(a_\eps)\le C_2|\eps|$. It is normally attracting in the metric sense, and the induced return is an orientation-preserving bi-Lipschitz circle homeomorphism whose lift $\widetilde q_\eps(\vartheta)=Q_\eps(\vartheta,a_\eps(\vartheta))$ satisfies the conclusions of Lemma~\ref{lem:globalphaseexpansion}.
\end{theorem}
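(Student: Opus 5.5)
The plan is a Lipschitz graph transform (Hadamard method) on the product tube $\Sone\times\overline{B_\ell(0)}$ given by Lemma~\ref{lem:returncylinder}, using only the Lipschitz closeness~\eqref{eq:globalLipclose} and the uncoupled structure~\eqref{eq:uncoupledGlobal}. First, shrink the tube radius $\ell$ so that, by continuity of $D_aA_0$ and \eqref{eq:uncoupledGlobal}, $\Lip_a A_0\le\mu:=\tfrac12(1+\max\{\rho_1,\rho_2\})<1$ on the tube. Since $A_0(\vartheta,0)=0$ for all $\vartheta$, we also have $\Lip_\vartheta A_0(\vartheta,a)\le C\|a\|$ on the tube. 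With \eqref{eq:globalLipclose}, this gives $\|A_\eps(\vartheta,0)\|\le C|\eps|$, $\Lip_a A_\eps\le\mu+C|\eps|$, $\Lip_\vartheta A_\eps\le C(\|a\|+|\eps|)$, and $Q_\eps(\vartheta,a)=\vartheta+\widetilde Q_\eps$ with $\Lip\widetilde Q_\eps\le C|\eps|$.

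Second, consider the space $\mathcal G$ of Lipschitz graphs $a:\Sone\to\R^2$ with $\|a\|_\infty\le C_1|\eps|$ and $\Lip(a)\le C_2|\eps|$. For $a\in\mathcal G$, the base map $\vartheta\mapsto Q_\eps(\vartheta,a(\vartheta))$ has a lift of the form $\vartheta+(\text{Lipschitz function with constant }O(\eps))$. It is therefore a bi-Lipschitz orientation-preserving circle homeomorphism $q_a$ whose inverse has Lipschitz constant $1+O(\eps)$. Define the graph transform
\[
(\mathcal T a)(\vartheta):=A_\eps\bigl(q_a^{-1}(\vartheta),a(q_a^{-1}(\vartheta))\bigr).
\]
The sup bound is then $\|\mathcal T a\|_\infty\le C|\eps|+(\mu+C|\eps|)C_1|\eps|\le C_1|\eps|$ once $C_1(1-\mu)>2C$. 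The Lipschitz bound is
\[
\Lip(\mathcal T a)\le(1+O(\eps))\bigl[C(C_1+1)|\eps|+(\mu+C|\eps|)C_2|\eps|\bigr]\le C_2|\eps|,
\]
provided $C_2$ is chosen large relative to $C_1$ and $\eps$ is small. So $\mathcal T$ maps $\mathcal G$ into itself.

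Third, I would show that $\mathcal T$ contracts $\mathcal G$ in the $C^0$ metric with factor $\mu+O(\eps)$. This is the step I expect to be the main obstacle, because $q_a$ depends on $a$ and the maps are only Lipschitz, not $C^1$. The first thing to try is the standard estimate. For $a,b\in\mathcal G$ and a fixed $\vartheta$, set $s=q_a^{-1}\vartheta$ and $t=q_b^{-1}\vartheta$. Then $|s-t|\le C|\eps|\,\|a-b\|_\infty$, from $Q_\eps(s,a(s))=Q_\eps(t,b(t))$ and the $(1-O(\eps))$ lower bound. Hence $\|(\mathcal T a)(\vartheta)-(\mathcal T b)(\vartheta)\|\le(\mu+O(\eps))\|a-b\|_\infty$, using $\Lip_\vartheta A_\eps=O(\eps)$ on the tube region of amplitude $O(\eps)$. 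Since $\mathcal G$ is complete in $C^0$ (Arzelà--Ascoli, with closed Lipschitz bounds), Banach's theorem gives a unique fixed point $a_\eps$, and its graph $\mathcal C_\eps$ is invariant. Uniqueness among all invariant graphs with these bounds follows from the contraction.

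Finally, normal attraction is obtained by comparing an arbitrary point $z=(\vartheta,a)$ with the graph point over the image base. The same two Lipschitz estimates give
\[
\|A_\eps(z)-a_\eps(Q_\eps(z))\|\le(\mu+O(\eps))\|a-a_\eps(\vartheta)\|.
\]
The vertical distance to $\mathcal C_\eps$ is equivalent to $\operatorname{dist}(\cdot,\mathcal C_\eps)$ because the slope is $O(\eps)$, which yields the metric normal attraction with $\lambda=\mu+O(\eps)$. The induced return is $q_{a_\eps}$, which is bi-Lipschitz and orientation preserving, with lift $\widetilde q_\eps(\vartheta)=Q_\eps(\vartheta,a_\eps(\vartheta))$. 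Because $a_\eps$ has exactly the bounds required in Lemma~\ref{lem:globalphaseexpansion}, that lemma applies directly and gives the phase expansion~\eqref{eq:globalphaseexp} together with the $o(|\eps|)$ Lipschitz remainder on compact intervals in chart A.
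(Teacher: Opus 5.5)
Your proposal is correct and follows essentially the same route as the paper. The paper also runs a Lipschitz graph transform on the class $\|a\|_\infty\le C_1|\eps|$, $\Lip(a)\le C_2|\eps|$, fixing $C_1$ first, then $C_2$ relative to $C_1$, then taking $|\eps|$ small, and it concludes with a sup-norm contraction of factor $\max\rho_i+O(\eps)$, Banach's theorem, and the same one-step normal-contraction estimate. Your explicit bound on $|q_a^{-1}\vartheta-q_b^{-1}\vartheta|$ and your conversion of vertical displacement into $\operatorname{dist}(\cdot,\mathcal C_\eps)$ through the $O(\eps)$ slope supply details that the paper leaves implicit.
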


\begin{proof}
Write $\mathcal R_\eps=(Q_\eps,A_\eps)$ and let $\Lip_a$ and $\Lip_\vartheta$ denote separate Lipschitz constants. Choose $\max\{\rho_1,\rho_2\}<\lambda_0<1$. By~\eqref{eq:uncoupledGlobal}, reducing the fixed tube if necessary, and then using~\eqref{eq:globalLipclose}, we have
\begin{equation}\label{eq:graphestimates}
 \Lip_a(A_\eps)\le\lambda_0,\quad
 \Lip(Q_\eps-\vartheta)\le M|\eps|,\quad
 \|A_\eps(\vartheta,0)\|\le M|\eps|.
\end{equation}
The constants are now fixed in the following order. First fix $C_1>M/(1-\lambda_0)$. Since $A_0(\vartheta,0)=0$, we have $\partial_\vartheta A_0(\vartheta,0)=0$, and $C^2$ smoothness of $A_0$ gives $\Lip_\vartheta(A_0|_{\|a\|\le C_1|\eps|})\le M'C_1|\eps|$. Adding the Lipschitz perturbation from~\eqref{eq:globalLipclose} yields
\begin{equation}\label{eq:graphangularnormal}
 \Lip_\vartheta(A_\eps|_{\|a\|\le C_1|\eps|})\le M_1|\eps|,
\end{equation}
where $M_1=M_1(C_1)$ depends on $C_1$ but not on $\eps$. Next fix $C_2>M_1/(1-\lambda_0)$. Only then is $|\eps|$ taken small.

Let $\mathcal G_\eps$ be the closed complete class of periodic Lipschitz maps $u:\Sone\to\R^2$ satisfying $\|u\|_\infty\le C_1|\eps|$ and $\Lip(u)\le C_2|\eps|$. For $u\in\mathcal G_\eps$, set $q_u(\vartheta)=Q_\eps(\vartheta,u(\vartheta))$. By~\eqref{eq:graphestimates},
\[
 \Lip(q_u-\id)\le M|\eps|(1+C_2|\eps|)=:m_\eps<1.
\]
Hence the lift of $q_u$ is increasing and $q_u$ is an orientation-preserving bi-Lipschitz circle homeomorphism with $\Lip(q_u^{-1})\le(1-m_\eps)^{-1}$. Its graph image is the graph of
\[
 (\mathsf G u)(\vartheta')
 :=A_\eps\bigl(q_u^{-1}(\vartheta'),u(q_u^{-1}(\vartheta'))\bigr).
\]
Equations~\eqref{eq:graphestimates}--\eqref{eq:graphangularnormal} imply
\[
 \|\mathsf G u\|_\infty\le M|\eps|+\lambda_0\|u\|_\infty,
 \qquad
 \Lip(\mathsf G u)\le
 \frac{M_1|\eps|+\lambda_0\Lip(u)}{1-m_\eps}.
\]
Hence $\|\mathsf G u\|_\infty\le(M+\lambda_0C_1)|\eps|\le C_1|\eps|$, by the choice of $C_1$, and $\Lip(\mathsf G u)\le(M_1+\lambda_0C_2)|\eps|/(1-m_\eps)\le C_2|\eps|$ as soon as $m_\eps\le1-\lambda_0-M_1/C_2$. The right-hand side is positive by the choice of $C_2$, and $m_\eps\to0$ for the fixed $C_2$, so this holds for all sufficiently small $|\eps|$. Thus $\mathsf G$ preserves $\mathcal G_\eps$.

For $u,v\in\mathcal G_\eps$,
\[
 \|q_u^{-1}-q_v^{-1}\|_\infty
 \le\frac{M|\eps|}{1-m_\eps}\|u-v\|_\infty.
\]
Consequently, using the transverse contraction and the Lipschitz constants of $u,v$,
\begin{equation}\label{eq:graphcontraction}
 \|\mathsf G u-\mathsf G v\|_\infty
 \le(\lambda_0+C|\eps|)\|u-v\|_\infty.
\end{equation}
For $|\eps|$ small this coefficient is less than one. Banach's fixed-point theorem provides a unique invariant graph $a_\eps\in\mathcal G_\eps$.

For a point $a=a_\eps(\vartheta)+b$ near that graph, its transverse displacement after one return satisfies
\[
 \|b'\|\le
 \bigl(\Lip_a(A_\eps)
 +\Lip(a_\eps)\Lip_a(Q_\eps)\bigr)\|b\|
 \le(\lambda_0+C\eps^2)\|b\|.
\]
This proves metric normal attraction on a sufficiently small invariant tube. The induced circle map is $q_{a_\eps}$, hence is an orientation-preserving bi-Lipschitz homeomorphism. Lemma~\ref{lem:globalphaseexpansion}, applied to the fixed graph, gives the claimed phase expansion and local remainder estimate.
\end{proof}

No condition on the zeros of $H$ is required for the existence of the invariant graph.

\begin{proof}[Proof of Theorem~\ref{thm:globalreciprocal}]
The parameters are chosen in the following order. Since all zeros of $H$ are simple, they are finite in number (Remark~\ref{rem:finitezeros}), and since $H(0)\ne0$ none of them is $0$. First fix $\kappa\in(0,\pi/8)$ so small that all zeros of $H$ lie in $(2\kappa,2\pi-2\kappa)$, and put $J_*=[2\kappa,2\pi-2\kappa]$. Then construct the return cylinder of Lemmas~\ref{lem:gluing} and~\ref{lem:returncylinder} with this $\kappa$; the buffer $\zeta$, the transverse radii and $\eps_0$ are chosen afterwards, as in their proofs. The interval $J_*$ is compact, contains all zeros of $H$ in its interior, and lies in chart A, where $\kappa<\psi<2\pi-\kappa$. By Theorem~\ref{thm:globalcircle} the return possesses a normally attracting Lipschitz invariant circle, and Lemma~\ref{lem:globalphaseexpansion} gives
\[
 \widetilde q_\eps(\vartheta)
 =\vartheta+\eps H(\vartheta)+r_\eps(\vartheta),\quad
 \|r_\eps\|_\infty=O(\eps^2),\quad
 \Lip(r_\eps|_{J_*})=o(\eps).
\]
Lemma~\ref{lem:circle} therefore gives exactly one attracting fixed point near each downward zero, one repelling fixed point near each upward zero, and no other periodic or nonwandering points on the circle. Let $\psi_j$ be a zero of $H$ and $(\vartheta_\eps,a_\eps(\vartheta_\eps))$ the fixed point near it. Since $\eps H(\vartheta_\eps)+r_\eps(\vartheta_\eps)=0$ with $\|r_\eps\|_\infty=O(\eps^2)$, and $\psi_j$ is simple, $\vartheta_\eps=\psi_j+O(\eps)$; also $a_\eps(\vartheta_\eps)=O(\eps)$. So the fixed point lies in chart A, where the return is the smooth fixed-order map $\mathcal P_\eps$ and $\vartheta=\psi$, and it is a fixed point of $\mathcal P_\eps$ within $O(\eps)$ of $(\psi_j,0)$. Theorem~\ref{thm:reciprocallocal} provides a fixed point of $\mathcal P_\eps$ which is unique in a fixed neighbourhood of $(\psi_j,0)$. For small $\eps$ both lie in that neighbourhood, so they coincide: the fixed point is the one-to-one locked state of Theorem~\ref{thm:reciprocallocal}, with the multipliers of Corollary~\ref{cor:period} and the stated stability. A periodic point of the full return remaining in the attracting tube must lie on the invariant graph: iterating its period in the normal-contraction inequality forces its distance to the graph to vanish. Hence there are no other periodic returns in that tube. Metric normal attraction makes the downward fixed points orbitally asymptotically stable for the full impulsive system.
\end{proof}

\paragraph{A geometric sufficient condition for regularity.}
We now establish the hypothesis of Theorem~\ref{thm:globalreciprocal} for a concrete class of impulses.

\begin{proposition}[Invariant emission sections imply regularity]\label{prop:sufficientregularity}
Suppose that the $C^2$ local impulse diffeomorphisms satisfy
\begin{equation}\label{eq:sufficientregularity}
 K_i^\eps(\Sigma_i)\subseteq\Sigma_i,
 \qquad i=1,2,
\end{equation}
for all $|\eps|$ sufficiently small, locally near $p_i$. Then the paired-event transition, with emissions recorded as in Definition~\ref{def:singleemission}, is regular in the sense of Definition~\ref{def:pairedregular}. In particular, the regularity assumption of Theorem~\ref{thm:globalreciprocal} follows from~\eqref{eq:sufficientregularity}.
\end{proposition}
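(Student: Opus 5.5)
The plan is to write the paired-event transition $E_\eps$ explicitly on each side of the switching set $\{u=0\}$ and compare it with $E_0$. Lemma~\ref{lem:pairedepisode} already gives the event times and receiver states to within $O(|\eps|)$ uniformly. We will upgrade this to a Lipschitz estimate by showing that, under~\eqref{eq:sufficientregularity}, no kick ever carries its receiver across its own section. Then $E_\eps^\pm$ are each genuine $C^2$ compositions of free flights, kicks and hitting maps, and they agree on $\{u=0\}$.

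\emph{Step 1: the two sides of $\Sigma_i$ are preserved.} Since $K_i^\eps$ is a diffeomorphism near $p_i$ with $K_i^\eps(\Sigma_i)\subseteq\Sigma_i$ and $K_i^0=\id$, it maps the local complement of $\Sigma_i$ into the complement. By continuity in $\eps$ it maps $\{g_i<0\}$ into $\{g_i<0\}$ and $\{g_i>0\}$ into $\{g_i>0\}$ for small $|\eps|$. Hence an incoming receiver stays incoming (or is moved along $\Sigma_i$ if it lies on it), and the immediate-emission clause of Definition~\ref{def:singleemission} is triggered only when the receiver is already on $\Sigma_i$, that is, at $u=0$. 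Recrossings cannot occur either. In particular the signed hitting time $\widehat s_{12}$ of Lemma~\ref{lem:pairedepisode} satisfies $\widehat s_{12}\ge0$ for $u\ge0$, with equality exactly at $u=0$: at $u=0$ the receiver is on $\Sigma_2$ and stays there. So $s_{12}=\widehat s_{12}$ is a $C^2$ function of $(z,\eps)$ on the closed half $\{u\ge0\}$, and similarly $s_{21}$ on $\{u\le0\}$.

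\emph{Step 2: one-sided smoothness and matching.} On $\{u\ge0\}$ write
\[
E_\eps^+=\Phi_{\text{out}}\circ(K_1^\eps\times\id)\circ\Phi_{s_{12}}\circ(\id\times K_2^\eps)\circ\Phi_{t_1},
\]
with the last free flight to $\mathcal S_{\rm out}$ given by a $C^2$ hitting time. Each factor is $C^2$ jointly in $(z,\eps)$ on a neighbourhood of the closed half, because we can use the signed-time extension of the hitting equation. Then $E_\eps^+$ extends $C^2$ across $\{u=0\}$, and likewise $E_\eps^-$. At $u=0$ we have $s_{12}=s_{21}=0$, and both compositions reduce to applying $K_1^\eps\times K_2^\eps$ to the same pre-impulse state, as in~\eqref{eq:simultaneous}, followed by the same free flight. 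So $E_\eps^+=E_\eps^-$ on the switching set, and $E_\eps$ is continuous and piecewise $C^1$. Since $E_\eps^\pm$ are $C^2$ in $\eps$ with $E_0^\pm=E_0$, Taylor's theorem on compact sets gives $\|E_\eps^\pm-E_0\|_{C^1}\le C|\eps|$ on each closed half.

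\emph{Step 3: from one-sided $C^1$ bounds to a global Lipschitz bound.} This is the main obstacle: Lipschitz estimates on each closed half must be glued across the hypersurface $\{u=0\}$. Let $D_\eps:=E_\eps-E_0$. It is continuous, it is $C^1$ on each closed half with derivative bounded by $C|\eps|$, and the switching set is a smooth hypersurface independent of $\eps$. Work in flow-box coordinates in which $u$ is a coordinate, so that the neighbourhood is convex. For $z,z'$ on opposite sides, the segment $[z,z']$ meets $\{u=0\}$ at a point $z''$. Then
\[
\|D_\eps(z)-D_\eps(z')\|\le\|D_\eps(z)-D_\eps(z'')\|+\|D_\eps(z'')-D_\eps(z')\|\le C|\eps|\,\|z-z'\|.
\]
The $C^0$ bound follows from the same one-sided estimates. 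Passing back to the fixed incoming and outgoing coordinates changes the constant only by fixed bi-Lipschitz factors, which gives~\eqref{eq:pairedReg}.
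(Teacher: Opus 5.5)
Your proposal is correct and follows essentially the same route as the paper: kicks preserve the two sides of $\Sigma_i$, so the signed hitting times $\widehat s_{12},\widehat s_{21}$ vanish exactly on $\{u=0\}$ and no forced emission or recrossing occurs. The two event-order branches extend $C^2$ across the switching set and agree there because the product kicks commute; joint $C^2$ dependence on $\eps$ gives $\|E_\eps^\pm-E_0\|_{C^1}\le C|\eps|$; and the Lipschitz bound is glued along segments that cross the straightened hyperplane at most once in a convex box.

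The differences are minor. The paper obtains side preservation from the factorisation $g_i(K_i^\eps x)=\alpha_i^\eps(x)g_i(x)$ with $\alpha_i^\eps\ge1/2$, which needs neither injectivity nor your covering and connectedness argument. It obtains positivity of the hitting time from $\widehat s_{12}=u\bigl(1+O(|\eps|)\bigr)$ rather than directly from side preservation.
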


\begin{proof}
Near $\Sigma_i$ introduce $C^2$ coordinates $(v,y)$ with $v=g_i(x)$, so that $\Sigma_i=\{v=0\}$. Let $k_i^\eps(v,y)$ be the normal coordinate of $K_i^\eps$. Exact invariance means $k_i^\eps(0,y)=0$. Hence the fundamental theorem of calculus gives
\[
 k_i^\eps(v,y)
 =v\int_0^1\partial_v k_i^\eps(tv,y)\,\dd t
 =:\alpha_i^\eps(v,y)v.
\]
Since $K_i^0=\id$, we have $\alpha_i^0=1$. By continuity, on a smaller fixed neighbourhood and for small $|\eps|$, $\alpha_i^\eps\ge1/2$. Equivalently,
\begin{equation}\label{eq:sectionfactor}
 g_i(K_i^\eps(x))=\alpha_i^\eps(x)g_i(x),
 \qquad\alpha_i^\eps(x)>0.
\end{equation}
Thus each kick preserves the emission section and its incoming and outgoing sides. In particular, a receiver which has not yet emitted cannot be carried across its emission section by a kick, and a receiver which has just emitted cannot be kicked back across that emission section.

Use the incoming section of Lemma~\ref{lem:pairedepisode}, and write $u=t_2-t_1$. Write $z=(u,w)$, with $u$ as the normal coordinate to the switching hypersurface and $w$ coordinates along it. Suppose $u\ge0$. The signed time $\widehat s_{12}$ of Lemma~\ref{lem:pairedepisode} is the unique solution of
\[
 g_2\bigl(\phi^2_{\widehat s_{12}}K_2^\eps\phi^2_{t_1(z)}x_2\bigr)=0.
\]
It is jointly $C^2$ by the implicit function theorem, since the derivative with respect to $\widehat s_{12}$ is $\dd g_2(F_2)$, uniformly bounded away from zero. On $u=0$, the receiver is already on $\Sigma_2$ before the kick and remains there by~\eqref{eq:sectionfactor}; hence $\widehat s_{12}(0,w,\eps)=0$. At $\eps=0$, $\widehat s_{12}(u,w,0)=u$. Integrating the derivative in $u$ gives
\[
 \widehat s_{12}(u,w,\eps)
 =u\int_0^1\partial_u \widehat s_{12}(tu,w,\eps)\,\dd t
 =u(1+O(|\eps|)),
\]
uniformly up to $u=0$. This is positive for $u>0$, so $s_{12}=\widehat s_{12}$ there. Similarly, on the other side, $s_{21}=\widehat s_{21}=-u(1+O(|\eps|))$. Thus no immediate impulse-induced emission is needed in this section-preserving case, while the natural emissions still occur.

Let $\Phi_t=(\phi_t^1,\phi_t^2)$ and define the product-space kicks
\[
 J_1^\eps(x_1,x_2)=(K_1^\eps x_1,x_2),
 \qquad
 J_2^\eps(x_1,x_2)=(x_1,K_2^\eps x_2).
\]
Immediately after the paired emissions the two event-order formulas are
\begin{align*}
 Y_{12}^\eps(z)&=J_1^\eps\circ\Phi_{s_{12}(z,\eps)}
 \circ J_2^\eps\circ\Phi_{t_1(z)}(z),\qquad u\ge0,\\
 Y_{21}^\eps(z)&=J_2^\eps\circ\Phi_{s_{21}(z,\eps)}
 \circ J_1^\eps\circ\Phi_{t_2(z)}(z),\qquad u\le0.
\end{align*}
Each formula extends jointly $C^2$ across $u=0$, with signed intermediate times. On that boundary $t_1=t_2$ and $s_{12}=s_{21}=0$, so the values coincide \emph{exactly} because $J_1^\eps J_2^\eps=J_2^\eps J_1^\eps$. Compose both formulas with the smooth free hit to the same fixed outgoing section, obtaining two $C^2$ branches, $E_\eps^+$ from $Y_{12}^\eps$ and $E_\eps^-$ from $Y_{21}^\eps$, with equal boundary values.

At $\eps=0$ the kicks are identities and both extended branches reduce, on their entire domains, to the same free transition $E_0$. Uniform $C^2$ regularity on compact closures and the parameter mean-value theorem imply
\[
 \|E_\eps^\pm-E_0\|_{C^1}\le C|\eps|.
\]
Straighten $u=0$ and restrict to a convex box. A line segment crosses that hyperplane at most once. Integrating the derivative estimate on the one or two pieces, using the equality of boundary values, gives
\[
 \|E_\eps-E_0\|_\infty+\Lip(E_\eps-E_0)\le C|\eps|.
\]
Each branch is $C^2$ up to $u=0$, so $E_\eps$ is piecewise $C^1$, and the last estimate is~\eqref{eq:pairedReg}. This is Definition~\ref{def:pairedregular}.
\end{proof}

\begin{corollary}[Emission sections aligned with impulses]\label{cor:alignedsections}
The paired-event transition is regular if, for $i=1,2$, either $K_i^\eps(x)=x+\eps \xi_i(x)d_i$, with $\xi_i$ a $C^2$ scalar function and $d_i$ a fixed nonzero direction, and $\Sigma_i$ is a short straight emission section parallel to $d_i$ and transverse to $F_i(p_i)$; or $K_i^\eps$ is the time-$\eps$ flow of a nonvanishing perturbative field $B_i$ and $\Sigma_i$ is an integral curve of $B_i$ transverse to $F_i$.
\end{corollary}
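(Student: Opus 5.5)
The plan is to reduce both cases to Proposition~\ref{prop:sufficientregularity}: for each $i$ I will show that the impulse map preserves its emission section locally near $p_i$, i.e.\ $K_i^\eps(\Sigma_i)\subseteq\Sigma_i$ for all small $|\eps|$. Regularity then follows directly from that proposition. The two alternatives for $i=1,2$ are independent, and the invariance hypothesis is imposed one receiver at a time. So it suffices to check each alternative for a single $i$.

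\emph{Translation case.} Let $\Sigma_i=\{p_i+s\,d_i:|s|<s_0\}$ be the short straight segment parallel to $d_i$. Transversality to $F_i(p_i)$ means $d_i$ and $F_i(p_i)$ are independent, and by continuity this persists along the segment for $s_0$ small. For $x=p_i+s d_i$,
\[
K_i^\eps(x)=p_i+\bigl(s+\eps\xi_i(x)\bigr)d_i .
\]
This point lies on the line carrying $\Sigma_i$. Since $\xi_i$ is bounded on a compact neighbourhood, $|s+\eps\xi_i(x)|<s_0$ as long as $|s|\le s_1<s_0$ and $|\eps|$ is small. Thus $K_i^\eps$ maps a slightly shorter subsegment into $\Sigma_i$. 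That is exactly the local invariance ``near $p_i$'' required in~\eqref{eq:sufficientregularity}. I will replace $\Sigma_i$ by this subsegment; it is still an admissible emission section containing $p_i$.

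\emph{Flow case.} Here $K_i^\eps=\varphi^{B_i}_\eps$, the time-$\eps$ flow of $B_i$, and $\Sigma_i$ is an integral curve of $B_i$. The flow maps an integral curve into itself: a point at parameter $s$ moves to parameter $s+\eps$. By the same shrinking argument, a subarc is mapped into $\Sigma_i$ for small $|\eps|$. We must also check that this $K_i^\eps$ belongs to the admissible class. It is $C^2$ jointly in $(x,\eps)$ if $B_i$ is $C^2$, it is a diffeomorphism onto its image, $K_i^0=\id$, and its first-order field is $B_i$.

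The main obstacle is only this bookkeeping. Proposition~\ref{prop:sufficientregularity} asks for invariance ``locally near $p_i$'', whereas the image of an endpoint of $\Sigma_i$ may leave the segment. I will handle this by noting that the proof of that proposition only uses~\eqref{eq:sufficientregularity} on a fixed neighbourhood of $p_i$. Receivers near simultaneous emission lie within $O(u_0)+O(\eps)$ of $p_i$, so shrinking $\Sigma_i$ to an interior subarc, independently of $\eps$, suffices.
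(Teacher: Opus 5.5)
Your proposal is correct and follows the paper's proof. Both arguments reduce the two cases to Proposition~\ref{prop:sufficientregularity} by checking that $K_i^\eps(\Sigma_i)\subseteq\Sigma_i$ locally near $p_i$. The paper phrases the translation case through the affine defining function, $g_i(K_i^\eps x)=g_i(x)$, which makes your endpoint-shrinking bookkeeping unnecessary, but the content is the same.
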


\begin{proof}
In the first case the affine defining function of $\Sigma_i$ satisfies $g_i(K_i^\eps x)=g_i(x)$; in the second, $\Sigma_i$ is locally invariant under the flow of $B_i$. In both cases~\eqref{eq:sufficientregularity} holds locally near $p_i$, and Proposition~\ref{prop:sufficientregularity} applies.
\end{proof}

Mere first-order tangency of $B_i$ to $\Sigma_i$ does not imply invariance under an arbitrary kick map. Invariance of the emission section is distinct from invariance of an isochron and does not force $H(0)=0$.

\begin{remark}[Why the general regularity hypothesis is genuine]\label{rem:backwardkick}
The single-emission convention alone does not imply Definition~\ref{def:pairedregular}. An impulse moving its receiver back across a section may produce a short recrossing; on opposite sides of switching, the reciprocal impulse can then be applied before or after a free flight of size $O(|\eps|)$. The two one-sided limits at the switching set may then differ by an amount of order $\eps^2$ that is nonzero for each fixed $\eps\neq0$; Example~A of Section~\ref{sec:analytic} exhibits exactly this mechanism. The general theorem therefore assumes regularity, while Proposition~\ref{prop:sufficientregularity} supplies one concrete class in which it is proved.
\end{remark}

\section{Small period mismatch and structural stability}\label{sec:nearbyperiods}

This section answers two questions. How does a small difference of the natural periods enter the phase balance as the coupling tends to zero? And, for a fixed small coupling strength, which perturbations of the system preserve the global dynamics? The second question does not settle the first: at an equal-period locked state $\lambda_{\rm rel}=1+\eps H'(\psi_*)+o(\eps)$, so the relative-phase multiplier lies within $O(\eps)$ of the neutral value~$1$. We therefore keep the coupling strength and the period perturbation as independent parameters.

\subsection{Period mismatch of the order of the coupling}
Let $\delta$ be a second small parameter, and consider a jointly $C^2$ family
\[
F_i^\delta,\qquad \Sigma_i^\delta,\qquad K_{i,\delta}^\eps,
\qquad i=1,2,
\]
which at $\delta=0$ is the equal-period system of Sections~\ref{sec:setting}--\ref{sec:global}, with $K_{i,\delta}^0=\id$. By hyperbolicity, $F_i^\delta$ has an attracting cycle $\Gamma_i^\delta$ near $\Gamma_i$, of period $T_i^\delta$ and angular frequency $\omega_i^\delta=2\pi/T_i^\delta$. For small $\delta$ the section $\Sigma_i^\delta$ is transverse to $\Gamma_i^\delta$ and meets it at a single point near $p_i$, which we take as phase origin.

We assume the following \emph{uniform phase hypotheses}. There are annular neighbourhoods $U_i^*$, independent of small $\delta$, on which the asymptotic phases $\Theta_i^\delta$ are $C^2$ and
\begin{equation}\label{eq:phaseuniformdelta}
	\sup_{|\delta|\le\delta_0}\|\Theta_i^\delta\|_{C^2(U_i^*)}<\infty,
	\qquad
	\Theta_i^\delta\longrightarrow\Theta_i\quad\hbox{in }C^2(U_i^*)
	\quad(\delta\to0),
\end{equation}
and the first-order phase responses $h_i^\delta$ of Definition~\ref{def:response}, computed for the $\delta$-system with phase origins at $\Sigma_i^\delta\cap\Gamma_i^\delta$, satisfy
\begin{equation}\label{eq:huniformdelta}
	\|h_i^\delta-h_i\|_{C^0}=O(|\delta|),
	\qquad
	\|h_i^\delta-h_i\|_{C^1}=o(1).
\end{equation}

On $\Gamma_1^\delta\times\Gamma_2^\delta$ the free flow changes the phase difference, during one natural period of oscillator~1, by
\[
(\omega_2^\delta-\omega_1^\delta)T_1^\delta=\Delta_\delta,
\]
the quantity~\eqref{eq:Deltadelta}. Thus $\Delta_0=0$ and $\Delta_\delta=O(|\delta|)$. Off the product of the cycles the return time differs from $T_1^\delta$ by $O(\|a\|)$, which adds a term $O(|\delta|\,\|a\|)$ to the drift.

\begin{proposition}[Fixed-order return with period mismatch]\label{prop:mismatchreturn}
	Assume Hypotheses~\ref{hyp:cycles} and~\ref{hyp:equalperiods} at $\delta=0$ and the uniform phase hypotheses~\eqref{eq:phaseuniformdelta}--\eqref{eq:huniformdelta}. Fix $M>0$ and a compact interval $J\subset(0,2\pi)$. Then, uniformly for sufficiently small $\eps>0$ and $|\delta|\le M\eps$, the two transverse fixed-point equations of the fixed-order return have a unique solution $a=a_{\eps,\delta}^{\rm fix}(\psi)$ on $J$, with $\|a_{\eps,\delta}^{\rm fix}\|_{C^1(J)}=O(\eps)$, and the remaining scalar fixed-point equation is
	\begin{equation}\label{eq:mismatchscalar}
		0=\Delta_\delta+\eps H(\psi)+R_{\eps,\delta}(\psi),
		\qquad
		\|R_{\eps,\delta}\|_{C^0(J)}=O(\eps^2),
		\qquad
		\|R_{\eps,\delta}\|_{C^1(J)}=o(\eps).
	\end{equation}
\end{proposition}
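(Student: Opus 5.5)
We will redo the proof of Proposition~\ref{prop:returnexp} and of the first half of Theorem~\ref{thm:reciprocallocal}, now carrying $\delta$ along as a second parameter, and then restrict to the wedge $|\delta|\le M\eps$.

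\textbf{Step 1: coordinates.} For each small $\delta$ we build the fixed-order return $\mathcal P_{\eps,\delta}$ on $\mathcal S_1^\delta=\Sigma_1^\delta\times V_2$. It is jointly $C^2$ in $(z,\eps,\delta)$ on a fixed tube $\mathcal N_{J,r_*}$. The coordinates are $\psi=\Theta_2^\delta-\Theta_1^\delta$, $a_1=r_1^\delta(x_1)$ and $a_2=r_2^\delta(x_2)$, with $r_i^\delta$ chosen $C^2$ in $\delta$. By~\eqref{eq:phaseuniformdelta}, these coordinates are uniformly $C^2$ and converge to the $\delta=0$ ones. All constants below are therefore uniform for $|\delta|\le\delta_0$.

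\textbf{Step 2: phase component.} The identity of Lemma~\ref{lem:phaseidentity} now acquires a drift term. Along the free flow, $\Theta_i^\delta$ grows at rate $\omega_i^\delta$, so $\psi$ changes at rate $\omega_2^\delta-\omega_1^\delta=O(|\delta|)$. Over the total flight time $\tau_1+\tau_2$ this gives
\[
\psi'-\psi=(\omega_2^\delta-\omega_1^\delta)(\tau_1+\tau_2)+\bigl[\Theta_2^\delta(K_{2,\delta}^\eps x_2)-\Theta_2^\delta(x_2)\bigr]-\bigl[\Theta_1^\delta(K_{1,\delta}^\eps y_1)-\Theta_1^\delta(y_1)\bigr].
\]
\begin{itemize}
\item The flight time satisfies $\tau_1+\tau_2=T_1^\delta+O(\|a\|+|\eps|)$, jointly $C^1$. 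The drift term is therefore $\Delta_\delta+O(|\delta|(\|a\|+|\eps|))$.
\item The impulse brackets expand as in~\eqref{eq:C2Taylor}. Using~\eqref{eq:huniformdelta}, their first-order part at $a=0$ is $\eps H_\delta(\psi)$, where $H_\delta=H+O(|\delta|)$ in $C^0$ and $H_\delta=H+o(1)$ in $C^1$.
\end{itemize}
Altogether,
\[
\psi'=\psi+\Delta_\delta+\eps H(\psi)+\mathcal E_\delta(\psi,a,\eps).
\]
For $|\delta|\le M\eps$ the remainder obeys $|\mathcal E_\delta|\le C(\eps^2+\eps\|a\|)$ and $\|D_a\mathcal E_\delta\|\le C\eps$.

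\textbf{Step 3: transverse component.} The transverse part is $a'=D_\delta a+\eps b+\delta c+Q$, where $D_\delta\to D$ and $Q$ is bounded as in~\eqref{eq:Qvalue}--\eqref{eq:Qderivatives}. The $\delta c$ term vanishes if $r_i^\delta$ defines $\Gamma_i^\delta$ exactly, since then $a=0$ is invariant at $\eps=0$. The implicit function theorem with parameters $(\psi,\eps,\delta)$, using the invertibility of $I-D$, gives a unique solution $a^{\rm fix}_{\eps,\delta}(\psi)$. It satisfies $\|a^{\rm fix}_{\eps,\delta}\|_{C^1(J)}=O(\eps+|\delta|)=O(\eps)$. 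Compactness of $J$ and of the parameter set makes the neighbourhood uniform.

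\textbf{Step 4: the scalar equation.} Substituting $a^{\rm fix}_{\eps,\delta}$ into the phase equation gives~\eqref{eq:mismatchscalar}, with
\[
R_{\eps,\delta}=\eps(H_\delta-H)+\mathcal E_\delta(\cdot,a^{\rm fix}_{\eps,\delta},\eps)+O(|\delta|\eps).
\]
The $C^0$ bound $O(\eps^2)$ follows from $|\delta|\le M\eps$, from~\eqref{eq:huniformdelta}, and from the analogue of~\eqref{eq:EcomposedLip}. The $C^1$ bound $o(\eps)$ follows from the analogue of Corollary~\ref{cor:remainder}. That corollary needs $\partial_\psi$ of the $a$-dependent part of the first-order coefficient to be $o(1)$ on $O(\eps)$ graphs, uniformly in $\delta$. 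This holds by uniform continuity of $DG_\delta$ together with $G_\delta\to G$ in $C^1$, which in turn comes from the $C^2$ convergence of the phases.

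\textbf{Main obstacle.} The delicate point is the drift term. The quantity $\Delta_\delta$ must enter exactly, and the $\psi$-derivative of $(\omega_2^\delta-\omega_1^\delta)(\tau_1+\tau_2)$ must be $o(\eps)$. The derivative of $\tau_1+\tau_2$ in $\psi$ is $O(\|a\|+\eps)$ along the graph, so this term's $\psi$-derivative is $O(|\delta|\eps)=O(\eps^2)$, which suffices. I would first verify that $\tau_1+\tau_2-T_1^\delta$ vanishes identically at $a=0$, $\eps=0$ for every $\psi$. This holds because oscillator~1 returns to $\Sigma_1^\delta$ after exactly $T_1^\delta$ on its own cycle.
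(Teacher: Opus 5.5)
Your proposal is correct and follows essentially the same route as the paper. The paper also adds the free drift $(\omega_2^\delta-\omega_1^\delta)\tau=\Delta_\delta+O\bigl(|\delta|(\|a\|+\eps)\bigr)$ to the impulsive brackets of Lemma~\ref{lem:phaseidentity}, replaces $h_i^\delta$ by $h_i$ at a cost of $O(\eps|\delta|)$ in $C^0$ and $o(\eps)$ in $C^1$ via~\eqref{eq:huniformdelta}, and solves the transverse equations using the uniformly invertible linearisation $I-D+O(|\delta|)$. Your explicit handling of the $O(\eps)$ impulse contribution to the flight time and of the $\psi$-derivative of the drift term supplies details that the paper's proof leaves implicit or defers to the proof of Theorem~\ref{thm:smallmismatch}.
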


\begin{proof}
	The argument of Lemma~\ref{lem:phaseidentity} applies to the perturbed phases $\Theta_i^\delta$, with one difference: a free flight of duration $t$ now changes the phase difference by $(\omega_2^\delta-\omega_1^\delta)t$. The phase displacement of the return is therefore the free drift plus the two impulsive phase changes of~\eqref{eq:exactphase}. As computed above, the drift is $\Delta_\delta+O(|\delta|\,\|a\|)$ in the tubular coordinates. At $\eps=0$ the transverse derivative at $a=0$ is $D+O(|\delta|)$, uniformly on $J$, where $D=\diag(\rho_1,\rho_2)$; hence the fixed-point linearisation $I-D+O(|\delta|)$ is uniformly invertible. The joint $C^2$ dependence of the vector fields, sections and impulse maps, together with~\eqref{eq:phaseuniformdelta}, makes the fixed-order estimates for hitting times and coordinates uniform in $\delta$. By~\eqref{eq:huniformdelta}, replacing $h_i^\delta$ by $h_i$ costs $O(\eps|\delta|)$ in $C^0$ and $o(\eps)$ in $C^1$. Since $|\delta|\le M\eps$ and the transverse graph has size $O(\eps)$, all the additional terms are $O(\eps^2)$ in $C^0$. The transverse implicit-function argument in the proof of Theorem~\ref{thm:reciprocallocal}, applied to the expansion of Proposition~\ref{prop:returnexp}, now gives~\eqref{eq:mismatchscalar}.
\end{proof}

\begin{proof}[Proof of Theorem~\ref{thm:smallmismatch}]
	Dividing~\eqref{eq:mismatchscalar} by $\eps$ along~\eqref{eq:nudrift} gives the equation $H_\nu+o_{C^1}(1)=0$, and the monotonicity and intermediate-value argument in the proof of Theorem~\ref{thm:reciprocallocal} gives its unique zero near $\psi_*$. At this fixed point the return derivative retains the block form~\eqref{eq:blockderivative}, with transverse block $D+o(1)$ since $|\delta|\le M\eps$. The free drift contributes $(\omega_2^\delta-\omega_1^\delta)\tau$ over a return of duration $\tau$; since $\omega_2^\delta-\omega_1^\delta=O(\eps)$ and $\tau=T_1^\delta+O(\|a\|+\eps)$ in $C^1$, its derivative is $O(\eps^2)$ in $\psi$ and $O(\eps)$ in $a$. The Schur-complement argument leading to~\eqref{eq:relmultproof} then gives $\lambda_{\rm rel}$, and the other two multipliers converge to $\rho_1,\rho_2\in(0,1)$ because the transverse block tends to $D$. This gives the stability statement.
\end{proof}

\begin{corollary}[Generic reciprocal locking with small period mismatch]\label{cor:genericmismatch}
Consider a jointly $C^2$ family satisfying the phase hypotheses
\eqref{eq:phaseuniformdelta}--\eqref{eq:huniformdelta} and whose equal-period
system at $\delta=0$ has the translation impulses~\eqref{eq:openreciprocal}.
Fix a nonzero direction $d_1$. For every direction $d_2$ in the open,
dense, full-measure set of Theorem~\ref{thm:opencouplings}, there is
$\eta_0>0$ such that, for each fixed $0<\eta<\eta_0$ and each $M>0$,
there exist $\gamma,\eps_0>0$ with the following property: whenever
$0<\eps<\eps_0$, $|\delta|\le M\eps$, and
$|\Delta_\delta|\le \gamma\eps$, the full impulsive system possesses a
locally unique, orbitally asymptotically stable one-to-one locked state.
In particular, the conclusion allows $T_1^\delta\ne T_2^\delta$.
\end{corollary}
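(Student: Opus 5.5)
The plan is to combine the construction in the proof of Theorem~\ref{thm:opencouplings} with the scalar equation~\eqref{eq:mismatchscalar} of Proposition~\ref{prop:mismatchreturn}. The mismatch is absorbed as a constant shift of size at most $\gamma$ in the phase balance, and $\gamma$ is chosen so that a downward zero survives uniformly.

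First fix $d_2$ in the set of Theorem~\ref{thm:opencouplings}. All zeros of $h_2=Z_2\cdot d_2$ are then simple, $h_2(0)\neq0$, and there is a downward zero $\psi_s\in(0,2\pi)$. As in that proof, choose $\eta_0$ and a compact interval $J\subset(0,2\pi)$ such that, for $0\le\eta\le\eta_0$, $H_\eta(\psi)=h_2(\psi)-\eta h_1(-\psi)$ has a zero $\psi_s(\eta)$ in the interior of $J$. Choose also a closed interval $I_\eta\subset J$ of fixed half-width $\ell$ around $\psi_s(\eta)$, on which $H_\eta'\le -c<0$. Then $|H_\eta|\ge c\ell$ at the endpoints of $I_\eta$, with opposite signs. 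Now fix $\eta\in(0,\eta_0)$ and $M>0$, and set $\gamma:=c\ell/4$.

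Apply Proposition~\ref{prop:mismatchreturn} with this $J$, the $\delta=0$ system having impulses~\eqref{eq:openreciprocal} with ratio $\eta$. For $0<\eps<\eps_0$ and $|\delta|\le M\eps$, the fixed points near $J\times\{0\}$ are the zeros of
\[
\Delta_\delta/\eps+H_\eta(\psi)+R_{\eps,\delta}(\psi)/\eps ,
\]
where $\|R_{\eps,\delta}/\eps\|_{C^0(J)}=O(\eps)$ and $\|R_{\eps,\delta}/\eps\|_{C^1(J)}=o(1)$, uniformly in $\delta$. The constant term has modulus at most $\gamma=c\ell/4$. For small $\eps$ the function therefore keeps the endpoint signs of $H_\eta$ on $I_\eta$, and its derivative is at most $-c/2$ there. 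It consequently has exactly one zero $\psi_\eps\in I_\eps$, and the transverse graph $a^{\rm fix}_{\eps,\delta}$ gives a locally unique fixed point of the fixed-order return. Note that this does not require $\Delta_\delta=\eps\nu+o(\eps)$ along a path: Theorem~\ref{thm:smallmismatch} is stated for such paths, so I would instead rerun its argument directly from~\eqref{eq:mismatchscalar}, which is uniform in $\delta$.

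For stability, repeat the block-derivative and Schur-complement argument of the proof of Theorem~\ref{thm:smallmismatch}. The relative entry is
\[
1+\eps H_\eta'(\psi_\eps)+o(\eps)\le 1-\eps c/2+o(\eps),
\]
because $\psi_\eps\in I_\eta$ (rather than $\psi_\eps$ being close to a zero of $H_\eta$). The drift contributes $O(\eps^2)$ in $\psi$ and $O(\eps)$ in $a$, and the transverse block is $D+o(1)$. All three multipliers therefore lie inside the unit disc, so the locked state is orbitally asymptotically stable. Since $\Delta_\delta$ may be any nonzero value with $|\Delta_\delta|\le\gamma\eps$, the result covers $T_1^\delta\neq T_2^\delta$.

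The main obstacle is uniformity. The $o(\eps)$ constants in Proposition~\ref{prop:mismatchreturn} and in the multiplier estimate must hold uniformly over all $\delta$ with $|\delta|\le M\eps$, not just along a single path. This must be extracted from that proposition's proof, which claims uniformity for $|\delta|\le M\eps$. One must also check that the derivative bound is used on the whole interval $I_\eta$ and not only at $\psi_s(\eta)$, since the zero is displaced by $O(\gamma)$ rather than by $o(1)$.
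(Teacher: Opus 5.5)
Your proposal is correct and follows essentially the same route as the paper. You take a downward zero of $H_\eta$ from the proof of Theorem~\ref{thm:opencouplings} and an interval on which $H_\eta'\le -c<0$ with endpoint values bounded away from zero. You then reduce via the uniform-in-$\delta$ scalar equation of Proposition~\ref{prop:mismatchreturn}, treating $\Delta_\delta/\eps$ as a constant of size at most $\gamma$, and finish with the Schur-complement bound $\lambda_{\rm rel}\le 1-c\eps+o(\eps)$ using the derivative bound on the whole interval. Your explicit choice $\gamma=c\ell/4$, uniform in $\eta$ and $M$, is a harmless and slightly stronger specialisation of the paper's choice $\gamma<\gamma_*$.
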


\begin{proof}
By the proof of Theorem~\ref{thm:opencouplings}, the equal-period balance
$H_\eta(\psi)=h_2(\psi)-\eta h_1(-\psi)$ has a simple downward zero $\psi_s\in(0,2\pi)$. Choose a compact
interval $I$ about $\psi_s$, contained in $(0,2\pi)$, and numbers
$\alpha,\gamma_*>0$ such that $H_\eta'\le-\alpha$ on $I$ and $H_\eta$ has opposite
signs at the endpoints, each of absolute value at least $2\gamma_*$.
Proposition~\ref{prop:mismatchreturn}, uniformly for
$|\delta|\le M\eps$, reduces the fixed-point equation on $I$ to
$H_\eta(\psi)+\Delta_\delta/\eps+R_{\eps,\delta}(\psi)/\eps=0$,
where $R_{\eps,\delta}/\eps\to0$ in $C^1(I)$ uniformly in $\delta$.
Taking $0<\gamma<\gamma_*$ and then $\eps_0$ sufficiently small, this function
is strictly decreasing and has opposite signs at the endpoints of $I$.
Its unique zero gives a locally unique locked state. The transverse
multipliers tend to $\rho_1,\rho_2\in(0,1)$. Writing
$\psi_{\eps,\delta}\in I$ for the resulting fixed-point phase, the
relative multiplier satisfies
$\lambda_{\rm rel}=1+\eps H_\eta'(\psi_{\eps,\delta})+o(\eps)
\le 1-\alpha\eps+o(\eps)<1$ uniformly for the stated parameters, by the
same Schur-complement argument as in Theorem~\ref{thm:smallmismatch}.
Hence the locked state is orbitally asymptotically stable. No paired-event regularity assumption is needed,
since $I$ stays away from simultaneous emission.
\end{proof}

\begin{corollary}[Locked period with mismatch]\label{cor:mismatchperiod}
	In the setting of Theorem~\ref{thm:smallmismatch}, suppose that $\Delta_{\delta(\eps)}=\eps\nu+O(\eps^2)$. Then $\psi_\eps=\psi_*+O(\eps)$, and the period of the locked state is
	\begin{equation}\label{eq:mismatchperiod}
		\mathcal T_{\eps,\delta}
		=T_1^\delta-\frac{\eps}{\omega_1^\delta}h_1^\delta(-\psi_*)+O(\eps^2)
		=T_2^\delta-\frac{\eps}{\omega_2^\delta}h_2^\delta(\psi_*)+O(\eps^2).
	\end{equation}
\end{corollary}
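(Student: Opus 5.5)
We prove the two parts of Corollary~\ref{cor:mismatchperiod} in order: first the sharper location of the locked phase, then the period, by the phase bookkeeping of Corollary~\ref{cor:period} adapted to unequal frequencies.

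\emph{Location of the locked phase.} By Proposition~\ref{prop:mismatchreturn}, the locked phase $\psi_\eps$ of Theorem~\ref{thm:smallmismatch} solves
\[
0=\Delta_{\delta(\eps)}+\eps H(\psi)+R_{\eps,\delta}(\psi).
\]
With $\Delta_{\delta(\eps)}=\eps\nu+O(\eps^2)$, division by $\eps$ gives $H_\nu(\psi_\eps)=O(\eps)$, since $\|R_{\eps,\delta}\|_{C^0}=O(\eps^2)$. The zero $\psi_*$ of $H_\nu$ is simple, so $|H_\nu'|\ge c>0$ near $\psi_*$. The mean value theorem, exactly as in~\eqref{eq:phasefixedproof}, then gives $\psi_\eps=\psi_*+O(\eps)$. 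The transverse coordinates of the fixed point are $O(\eps)$ by Proposition~\ref{prop:mismatchreturn}.

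\emph{Period.} We follow the proof of Corollary~\ref{cor:period}, now using the perturbed phase $\Theta_1^\delta$. Over one period $\mathcal T_{\eps,\delta}$ of the locked state, $\Theta_1^\delta$ increases by $\omega_1^\delta\mathcal T_{\eps,\delta}$ along the free flow, by~\eqref{eq:phaseidentity} for the $\delta$-system. It also changes once through the kick oscillator~1 receives, and the total is $2\pi$. That kick is applied at a point within $O(\eps)$ of $\gamma_1^\delta(-\psi_\eps)$. To see this, note that the transverse amplitude is $O(\eps)$ and the time between the two emissions is $(2\pi-\psi_\eps)/\omega_2^\delta+O(\eps)$. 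Hence the phase of oscillator~1 at reception is $-\psi_\eps+O(\eps)$, the drift term being $O(|\delta|)=O(\eps)$. By the Taylor expansion~\eqref{eq:C2Taylor} for $K_{1,\delta}^\eps$ and $\Theta_1^\delta$, whose constants are uniform in $\delta$ by~\eqref{eq:phaseuniformdelta} and joint $C^2$ dependence, the kick changes the phase by
\[
\eps\,\dd\Theta^\delta_1(B_1^\delta)+O(\eps^2)=\eps h_1^\delta(-\psi_\eps)+O(\eps^2),
\]
where the $O(\eps)$ displacement of the reception point costs $O(\eps^2)$ because $\dd\Theta^\delta_1(B^\delta_1)$ is uniformly $C^1$. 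Since $h_1^\delta$ is uniformly $C^1$ (from~\eqref{eq:huniformdelta} and $h_1\in C^1$), we have $h_1^\delta(-\psi_\eps)=h_1^\delta(-\psi_*)+O(\eps)$ by the first step. Thus
\[
2\pi=\omega_1^\delta\mathcal T_{\eps,\delta}+\eps h_1^\delta(-\psi_*)+O(\eps^2),
\]
and dividing by $\omega_1^\delta$, which is bounded away from zero, gives the first expression in~\eqref{eq:mismatchperiod}. The same computation for oscillator~2, which receives its kick near $\gamma_2^\delta(\psi_\eps)$, gives the second.

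\emph{Main obstacle.} The delicate point is uniformity in $\delta$. We must ensure that the reception point of each kick is within $O(\eps)$ of the cycle point at phase $\mp\psi_*$, even though the relative phase drifts between emissions and the emission sections depend on $\delta$. We handle this with the $O(|\delta|)=O(\eps)$ drift bound together with the uniform fixed-order hitting-time estimates already invoked in Proposition~\ref{prop:mismatchreturn}. As a consistency check, the two expressions agree to $O(\eps^2)$. Subtracting them reduces, after multiplying by $\omega_1^\delta\omega_2^\delta/(2\pi)$, to $\Delta_\delta+\eps H(\psi_*)=O(\eps^2)$. This holds because $H_\nu(\psi_*)=0$ and $\Delta_\delta=\eps\nu+O(\eps^2)$.
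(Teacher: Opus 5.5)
Your proposal is correct and follows the paper's proof. The $C^0$ bound in \eqref{eq:mismatchscalar}, together with $\Delta_{\delta(\eps)}=\eps\nu+O(\eps^2)$ and simplicity of $\psi_*$, gives $\psi_\eps=\psi_*+O(\eps)$, and the phase bookkeeping of Corollary~\ref{cor:period} with $\Theta_i^\delta$ and $\omega_i^\delta$ gives \eqref{eq:mismatchperiod}; your $O(\eps)$ location of the reception points fills in what the paper leaves implicit. A minor slip in your (non-essential) consistency check: the natural multiplier is $\omega_2^\delta$, which yields $\Delta_\delta+\eps H(\psi_*)+O(\eps^2)$ directly, whereas your factor $\omega_1^\delta\omega_2^\delta/(2\pi)$ only rescales this by $\omega_1^\delta/(2\pi)$, so the conclusion is unaffected.
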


\begin{proof}
	With the $O(\eps^2)$ drift expansion and the $C^0$ bound in~\eqref{eq:mismatchscalar}, the zero equation in the proof of Theorem~\ref{thm:smallmismatch} gives $\psi_\eps=\psi_*+O(\eps)$. Over one period of the locked state, oscillator~1 accumulates the phase $\omega_1^\delta\mathcal T_{\eps,\delta}$ in free flight and receives one impulse, which changes its phase by $\eps h_1^\delta(-\psi_*)+O(\eps^2)$; the total is $2\pi$, which is the first expression in~\eqref{eq:mismatchperiod}. The same calculation for oscillator~2 gives the second, and the two agree to the stated order by $H_\nu(\psi_*)=0$ and the hypothesis $\Delta_{\delta(\eps)}=\eps\nu+O(\eps^2)$.
\end{proof}

\begin{theorem}[Invariant circle with period mismatch]\label{thm:mismatchglobal}
	In the setting of Proposition~\ref{prop:mismatchreturn}, suppose that the single-emission convention holds with the same $\tau_*$ for all $|\delta|\le M\eps$ and that the paired-event transition is regular (Definition~\ref{def:pairedregular}), with constants uniform in $\delta$. Then, for every sufficiently small $\eps>0$ and $|\delta|\le M\eps$, the return has a Lipschitz invariant circle $\mathcal C_{\eps,\delta}$, normally attracting in the metric sense, and in the base coordinate $\vartheta$ of the perturbed return cylinder the induced circle map has a lift
	\begin{equation}\label{eq:mismatchglobal}
		\widetilde q_{\eps,\delta}(\vartheta)
		=\vartheta+\Delta_\delta+\eps H(\vartheta)+r_{\eps,\delta}(\vartheta),
		\qquad
		\|r_{\eps,\delta}\|_\infty=O(\eps^2).
	\end{equation}
\end{theorem}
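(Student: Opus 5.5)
The plan is to rerun the construction of Section~\ref{sec:reciprocalglobal} with $\delta$ carried along as a passive parameter, and to change only the bookkeeping of the phase. The single new ingredient is the free drift of the relative phase. It is $O(|\delta|)=O(\eps)$, and its Lipschitz dependence on the state is controlled by $|\delta|$.

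\emph{Step 1: the return cylinder for $\delta\neq0$.} We apply Lemmas~\ref{lem:pairedepisode}, \ref{lem:gluing} and~\ref{lem:returncylinder} to the $\delta$-system, building the charts from the phases $\Theta_i^\delta$. The uniform phase hypotheses~\eqref{eq:phaseuniformdelta}, joint $C^2$ dependence on $\delta$, and the uniform $\tau_*$ make all constants uniform for $|\delta|\le M\eps$. The free flow no longer preserves $\psi$. It shifts $\psi$ by $(\omega_2^\delta-\omega_1^\delta)t=O(\eps)\,t$, with $C^1$ dependence on the state. The gluing maps $\sigma_{\eps,\delta}$ and the chart corrections $d_\eps$ therefore remain $O(\eps)$-close to the reference maps in $C^1$. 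The reference maps are now taken at $(\eps,\delta)=(0,0)$, so that $Q_0(\vartheta,a)=\vartheta$ still holds for the reference return. The $\delta$-dependence is then one more $O(|\delta|)$ perturbation, smooth on each chart. Combined with the uniform regularity~\eqref{eq:pairedReg}, this gives~\eqref{eq:globalLipclose} in the form
\[
\|\mathcal R_{\eps,\delta}-\mathcal R_{0,0}\|_\infty+\Lip(\mathcal R_{\eps,\delta}-\mathcal R_{0,0})=O(\eps).
\]

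\emph{Step 2: the invariant circle.} The proof of Theorem~\ref{thm:globalcircle} used only~\eqref{eq:graphestimates}, \eqref{eq:graphangularnormal} and the structure of $\mathcal R_{0}$. These estimates hold here with constants independent of $\delta$, because the transverse contraction is $D+O(|\delta|)$ and hence still bounded by some $\lambda_0<1$. The graph transform $\mathsf G$ then preserves $\mathcal G_\eps$ and contracts, which yields a unique Lipschitz invariant graph $a_{\eps,\delta}$. The same one-step estimate as before gives metric normal attraction.

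\emph{Step 3: the phase expansion~\eqref{eq:mismatchglobal}.} We redo Lemma~\ref{lem:globalphaseexpansion}. Along one representative return, the exact physical increment is now
\[
(\omega_2^\delta-\omega_1^\delta)\tau+\Delta_2^\eps(w_2^\eps)-\Delta_1^\eps(w_1^\eps).
\]
Here $\tau=T_1^\delta+O(\|a\|+\eps)$. The drift term is therefore $\Delta_\delta+O(\eps\|a\|+\eps^2)=\Delta_\delta+O(\eps^2)$ on graphs of size $O(\eps)$.

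For the impulsive terms, \eqref{eq:uniformReceivers} gives $w_i^\eps=w_i^0+O(\eps)$. The free receiver states lie on $\Gamma_i^\delta$, within $O(|\delta|)$ of $\Gamma_i$. Together with~\eqref{eq:huniformdelta}, this shows that the impulsive terms equal $\eps H(\psi)+O(\eps^2)$.

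It remains to pass from the physical phase to the chart coordinate $\vartheta$. In chart B this costs $O(\eps^2)$ by~\eqref{eq:chartOeps2}, with the displacement bound~\eqref{eq:chartmove} still $O(\eps)$ because the drift itself is $O(\eps)$. Finally, $\vartheta-\psi=O(\eps)$, so $\eps H(\psi)=\eps H(\vartheta)+O(\eps^2)$.

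\emph{Main obstacle.} I expect the hard part to be the chart bookkeeping near simultaneous emission when $\delta\ne0$. The reference coordinate $X_B^0$ was defined through the free-flow map $\sigma_0$, whose angular part was exactly $\psi$; with drift, the free flight to $\mathcal S_1$ changes $\psi$ by an amount of order $\delta$. My first approach is to keep the $\delta=0$ maps $X_B^0$ and $\sigma_0$ as fixed references and absorb all of $\sigma_{\eps,\delta}-\sigma_0=O(\eps)$ in $C^1$ into the cutoff correction $d_\eps$. The exact agreement on identified pairs is then preserved as in Lemma~\ref{lem:returncylinder}, and the only price is that $\vartheta$ is no longer exactly the physical phase on chart A. That discrepancy is $O(\eps)$ in $C^1$, which Step~3 tolerates.
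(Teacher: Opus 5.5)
Your Steps 1--3 follow the paper's proof. The paper also reruns the two-chart construction with constants uniform in $|\delta|\le M\eps$ and gets the Lipschitz estimate by splitting $\mathcal R_{\eps,\delta}-\mathcal R_{0,0}$ through $\mathcal R_{0,\delta}$. It reuses the graph transform unchanged, and it adds the drift $\Delta_\delta+O(|\delta|(\|a\|+\eps))=\Delta_\delta+O(\eps^2)$ to the paired phase identity, with the impulsive terms handled by~\eqref{eq:huniformdelta}. You also single out the right delicate point, the chart-B bookkeeping.

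The device you propose for that point in your last paragraph does not work as stated. In the proof of Lemma~\ref{lem:returncylinder} the chart correction is cut off to zero for $\psi\le\kappa/2$. On the negative overlap strip the two charts agree exactly only because there the gluing map \emph{is} the free-flow map that defines $X_B^0$, since oscillator~1 emits before any kick. Suppose you freeze $X_B^0$ and $\sigma_0$ at $\delta=0$. The gluing map of the perturbed system on that strip is then the $\delta$-free flight $\sigma_{0,\delta}$ to $\Sigma_1^\delta\times V_2$, not $\sigma_{0,0}$, while your cut-off correction vanishes there. Identified points therefore receive different coordinates, the quotient coordinates are not well defined, and your claim that exact agreement is preserved fails. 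A second problem is that the hypotheses do not give an $O(\eps)$ bound in $C^1$ on the $\delta$-discrepancy: \eqref{eq:phaseuniformdelta} gives $\Theta_i^\delta\to\Theta_i$ in $C^2$ with no rate. If you do the bookkeeping through the $\delta=0$ phase difference and a correction whose Lipschitz constant is only $o(1)$, then \eqref{eq:chartOeps2} yields $o(\eps)$, not the $O(\eps^2)$ the statement requires.

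The repair, which is what the paper does, is to use the $\delta$-system's own uncoupled hitting map as reference. Set $X_B^0:=X_A\circ\sigma_{0,\delta}$, with $X_A$ built from $\Theta_i^\delta$ on $\Sigma_1^\delta\times V_2$. This map is smooth through simultaneous emission and coincides with the gluing map on the negative strip, so the cut-off construction goes through verbatim. It also leaves $\vartheta=\psi^\delta$ exactly on chart A, a fact Corollary~\ref{cor:mismatchMS} later relies on. The only change is on chart B, where the angular component of $X_B^0$ becomes $\psi^\delta+(\omega_2^\delta-\omega_1^\delta)\,t_1(z)$. The hitting time $t_1$ is uniformly $C^1$, so this extra term changes over one return by $O(|\delta|)\cdot O(\eps)=O(\eps^2)$, and the rest of your Step~3 then goes through as written.
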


\begin{proof}
	We repeat the construction of Section~\ref{sec:reciprocalglobal} for the perturbed system. Since the return cylinder is built from two observation sections rather than from the single section $\mathcal S_1$, the mismatch raises no new question about the number of emissions recorded in one return. The gluing map $\sigma_{\eps,\delta}$ is $C^2$ jointly in $(x,\eps,\delta)$, and for $|\delta|\le M\eps$ the free flows differ from the equal-period ones by $O(|\delta|)=O(\eps)$ in $C^1$. Hence Lemmas~\ref{lem:gluing} and~\ref{lem:returncylinder} hold with constants uniform in $\delta$, the only change being that the angular component of $X_B^0$ now differs from $\psi$ by $O(\eps)$; with the uniform regularity assumed, the graph transform of Theorem~\ref{thm:globalcircle} applies with constants independent of $|\delta|\le M\eps$.

More precisely, in the common cylinder coordinates write $\mathcal R_{\eps,\delta}-\mathcal R_{0,0}=(\mathcal R_{\eps,\delta}-\mathcal R_{0,\delta})+(\mathcal R_{0,\delta}-\mathcal R_{0,0})$. The first difference is controlled by the assumed uniform paired-event regularity and the fixed-order estimates, the second by the smooth dependence of the uncoupled flights on $\delta$; hence
\[
\|\mathcal R_{\eps,\delta}-\mathcal R_{0,0}\|_\infty
+\Lip(\mathcal R_{\eps,\delta}-\mathcal R_{0,0})
\le C\bigl(|\eps|+|\delta|\bigr)
\le C_M|\eps|,\qquad |\delta|\le M|\eps|.
\]
Here the constants and the product tube are independent of the admissible parameters. This is the uniform estimate needed for the graph transform and its normal-contraction bounds.

The phase calculation of Lemma~\ref{lem:globalphaseexpansion} must be modified in one point: free flight no longer preserves the phase difference, but changes it by $(\omega_2^\delta-\omega_1^\delta)t$ in time $t$. Over one return, whose duration is $T_1^\delta+O(\|a\|+|\eps|)$, this adds $\Delta_\delta+O(|\delta|(\|a\|+|\eps|))=\Delta_\delta+O(\eps^2)$ on graphs of size $O(\eps)$, as in the proof of Proposition~\ref{prop:mismatchreturn}. The two impulsive phase changes are unchanged to first order, by~\eqref{eq:huniformdelta}. The change of coordinates on chart B still contributes only $O(\eps^2)$, as in~\eqref{eq:chartOeps2}, because the return, drift included, still moves points by $O(\eps)$. This gives~\eqref{eq:mismatchglobal}.

\end{proof}

\begin{corollary}[Global dynamics with period mismatch]\label{cor:mismatchMS}
	In the setting of Theorem~\ref{thm:mismatchglobal}, let $\delta(\eps)$ be a path with $|\delta(\eps)|\le M\eps$ satisfying~\eqref{eq:nudrift}, and let $H_\nu=\nu+H$. If $H_\nu(0)\neq0$ and $H_\nu$ has at least one zero, all of them simple, then for every sufficiently small $\eps>0$ the circle map $q_{\eps,\delta(\eps)}$ is topologically Morse--Smale, with exactly one locked state near each zero of $H_\nu$, attracting at downward and unstable in the relative-phase direction at upward zeros, and none near simultaneous emission.
\end{corollary}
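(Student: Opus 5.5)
The plan mirrors the proof of Theorem~\ref{thm:globalreciprocal}, with $H$ replaced by $H_\nu$ and Theorem~\ref{thm:mismatchglobal} in place of Theorem~\ref{thm:globalcircle}. Since the zeros of $H_\nu$ are simple, they are finite in number (Remark~\ref{rem:finitezeros}). Since $H_\nu(0)\neq0$, they all lie in some $(2\kappa,2\pi-2\kappa)$. I will fix this $\kappa$ first and then build the perturbed return cylinder of Theorem~\ref{thm:mismatchglobal} with it, so that $J_*=[2\kappa,2\pi-2\kappa]$ lies in the fixed-order chart A. Along the path, \eqref{eq:nudrift} gives $\Delta_{\delta(\eps)}=\eps\nu+o(\eps)$. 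Substituting this into \eqref{eq:mismatchglobal}, the induced circle map has lift
\[
\widetilde q_\eps(\vartheta)=\vartheta+\eps H_\nu(\vartheta)+\widehat r_\eps(\vartheta),\qquad \widehat r_\eps=r_{\eps,\delta(\eps)}+\bigl(\Delta_{\delta(\eps)}-\eps\nu\bigr),
\]
with $\|\widehat r_\eps\|_\infty=o(\eps)$. The drift correction is constant in $\vartheta$, so it contributes nothing to the Lipschitz constant.

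The step I expect to be the main obstacle is the local Lipschitz bound $\Lip(\widehat r_\eps|_{J_*})=o(\eps)$ required by Lemma~\ref{lem:circle}, because Theorem~\ref{thm:mismatchglobal} states only the $C^0$ bound. I will obtain it as in the last part of the proof of Lemma~\ref{lem:globalphaseexpansion}. On chart A the return is the jointly $C^2$ fixed-order map, now also depending on $\delta$. Its phase displacement equals the free drift plus $\eps G_\delta(\psi,a)+\widetilde{\mathcal E}$, with $G_\delta(\psi,0)=H(\psi)+o_{C^1}(1)$ by \eqref{eq:huniformdelta}. The drift term $(\omega_2^\delta-\omega_1^\delta)\tau$ has $\psi$-Lipschitz constant $O(\eps^2)$ along graphs of size $O(\eps)$, as noted in the proof of Theorem~\ref{thm:smallmismatch}. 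Composing with the invariant graph, whose norm and Lipschitz constant are both $O(\eps)$, then gives the $o(\eps)$ Lipschitz bound on $J_*$, uniformly for $|\delta|\le M\eps$, exactly as in Corollary~\ref{cor:remainder}.

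Lemma~\ref{lem:circle} then applies with $H_\nu$ in place of $H$. For small $\eps>0$ the map $q_{\eps,\delta(\eps)}$ is topologically Morse--Smale, with exactly one fixed point near each zero of $H_\nu$, attracting at downward zeros and repelling at upward zeros, and none near $\vartheta=0$ because $|H_\nu|\ge c_0$ there. Each such fixed point satisfies $\vartheta_\eps=\psi_*+o(1)$ and $a=O(\eps)$, so it lies in chart A, where $\vartheta=\psi$. It is therefore a fixed point of the fixed-order return near $(\psi_*,0)$. By the local uniqueness in Theorem~\ref{thm:smallmismatch}, it coincides with the locked state given there. That theorem supplies the multipliers: the locked state is orbitally asymptotically stable at downward zeros, and at upward zeros it has $\lambda_{\rm rel}>1$, so it is unstable in the relative-phase direction. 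Finally, metric normal attraction of $\mathcal C_{\eps,\delta}$ forces every periodic return in the tube to lie on the circle. Hence there are no other locked states, and in particular none near simultaneous emission.
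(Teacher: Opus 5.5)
Your proposal is correct and follows essentially the same route as the paper: fix $\kappa$ first so that all zeros of $H_\nu$ lie in chart A, absorb $\Delta_{\delta(\eps)}-\eps\nu$ into an $o(\eps)$ constant remainder, get the local Lipschitz bound from the fixed-order estimates and the uniform phase hypotheses, and apply Lemma~\ref{lem:circle} to $H_\nu$. You additionally spell out the identification of the circle fixed points with the locked states of Theorem~\ref{thm:smallmismatch}, which the paper leaves to the analogy with Theorem~\ref{thm:globalreciprocal}, and you use tacitly two facts the paper states explicitly: in chart A the coordinate $\vartheta$ is the perturbed phase difference $\Theta_2^\delta-\Theta_1^\delta$, and simultaneous emission stays within $O(\eps)$ of $\vartheta=0$.
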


\begin{proof}
	Along~\eqref{eq:nudrift},
	\[
	\widetilde q_{\eps,\delta(\eps)}(\vartheta)
	=\vartheta+\eps H_\nu(\vartheta)+\widehat r_\eps(\vartheta),
	\qquad
	\|\widehat r_\eps\|_\infty=o(\eps).
	\]
	Since $H_\nu(0)\neq0$ and the zero set of $H_\nu$ is finite (Remark~\ref{rem:finitezeros}), there is $\kappa\in(0,\pi/8)$ such that all zeros of $H_\nu$ lie in the interior of $J_\nu=[2\kappa,2\pi-2\kappa]$. As in the proof of Theorem~\ref{thm:globalreciprocal}, this $\kappa$ is fixed first, and the return cylinder in the proof of Theorem~\ref{thm:mismatchglobal} is constructed with it; the remaining constants are chosen afterwards. Then $J_\nu$ lies in chart A, where $\vartheta$ is the perturbed phase difference $\psi^\delta:=\Theta_2^\delta-\Theta_1^\delta$, and the fixed-order estimates, together with~\eqref{eq:phaseuniformdelta}--\eqref{eq:huniformdelta}, give $\Lip(\widehat r_\eps|_{J_\nu})=o(\eps)$. Simultaneous emission stays within $O(\eps)$ of $\vartheta=0$, where $H_\nu\neq0$. Lemma~\ref{lem:circle}, applied to $H_\nu$, gives the conclusion.
\end{proof}

\begin{corollary}[Loss of locking]\label{cor:mismatchloss}
	In the setting of Theorem~\ref{thm:mismatchglobal}, let $\delta(\eps)$ satisfy $|\delta(\eps)|\le M\eps$ and~\eqref{eq:nudrift}, and put $H_\nu=\nu+H$. Suppose that $H_\nu$ has no zero. Then, for every sufficiently small $\eps>0$, the system with parameters $(\eps,\delta(\eps))$ has no locked state in the product tube of Theorem~\ref{thm:mismatchglobal}.
\end{corollary}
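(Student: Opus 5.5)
The plan is to argue through the invariant circle of Theorem~\ref{thm:mismatchglobal} and the phase expansion~\eqref{eq:mismatchglobal}. First, a locked state in the product tube lies on $\mathcal C_{\eps,\delta(\eps)}$. Its sampled point is a fixed point of the global return; more generally it is a periodic point whose iterates stay in the tube. By metric normal attraction, iterating the inequality $\operatorname{dist}(\mathcal R^n z,\mathcal C)\le C_*\lambda^n\operatorname{dist}(z,\mathcal C)$ over multiples of the period forces the distance to vanish, exactly as in the last step of the proof of Theorem~\ref{thm:globalreciprocal}. A one-to-one locked state has one emission of each oscillator per period, so it gives a fixed point of $\mathcal R_{\eps,\delta}$ rather than a point of higher period. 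It therefore suffices to show that the induced circle map $q_{\eps,\delta(\eps)}$ has no fixed point.

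Next, along the path~\eqref{eq:nudrift} the lift becomes
\[
\widetilde q_{\eps,\delta(\eps)}(\vartheta)-\vartheta=\eps H_\nu(\vartheta)+\widehat r_\eps(\vartheta),\qquad \|\widehat r_\eps\|_\infty=o(\eps),
\]
where $\widehat r_\eps=r_{\eps,\delta(\eps)}+(\Delta_{\delta(\eps)}-\eps\nu)$. Since $H_\nu$ is continuous on the compact circle and has no zero, it has a fixed sign and $|H_\nu|\ge c>0$. For $\eps$ small, $\|\widehat r_\eps\|_\infty<c\eps/2$. The displacement then has the sign of $H_\nu$ everywhere and modulus at least $c\eps/2$, so it never vanishes modulo $2\pi$. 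This holds because the displacement is $O(\eps)$, far from any nonzero multiple of $2\pi$. Hence $q$ has no fixed point.

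In fact every lift orbit drifts monotonically, by at least $c\eps/2$ per step. So $q$ has no periodic points either, which rules out periodic returns of any period in the tube. This covers the case where one wants to exclude locked states observed with a different number of returns.

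The main obstacle is the first step. One must make sure that ``locked state in the product tube'' corresponds precisely to a fixed point of the glued return $\mathcal R_{\eps,\delta}$. This requires that the quotient coordinates of Lemma~\ref{lem:returncylinder} identify the two observations of the same trajectory, so that a fixed point in chart A or chart B is a fixed point of $\mathcal R$. It also requires that the tube is the invariant neighbourhood on which metric normal attraction holds. I would cite the exact conjugacy~\eqref{eq:returnconjugacy}, which makes this bookkeeping immediate. Note also that no Lipschitz estimate on the remainder is needed, since only the $C^0$ bound enters.
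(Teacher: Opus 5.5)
Your main argument is correct and is essentially the paper's proof. A one-to-one locked state in the tube is a fixed point of the return, and metric normal attraction places it on $\mathcal C_{\eps,\delta(\eps)}$. The displacement $\eps H_\nu+o(\eps)$ then has a fixed sign and modulus between $c\eps/2$ and $O(\eps)$, so it neither vanishes nor reaches $\pm2\pi$.

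Drop the side remark that $q_{\eps,\delta(\eps)}$ has no periodic points, because it is false. A fixed-sign drift excludes only fixed points. The rotation number is positive and $O(\eps)$, so it may be rational $p/n$ with $n$ of order $1/\eps$, and then period-$n$ orbits exist. Such orbits are not one-to-one locked states, so the statement itself is unaffected.
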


\begin{proof}
	A locked state in the tube gives a fixed point of the return there. By metric normal attraction it lies on the invariant circle $\mathcal C_{\eps,\delta(\eps)}$,  so it is a fixed point of $q_{\eps,\delta(\eps)}$. Along~\eqref{eq:nudrift}, however, $\widetilde q_{\eps,\delta(\eps)}(\vartheta)-\vartheta=\eps H_\nu(\vartheta)+o(\eps)$ uniformly, as in the proof of Corollary~\ref{cor:mismatchMS}, and $|H_\nu|$ has a positive minimum on $\Sone$; so for small $\eps>0$ the displacement neither vanishes nor reaches $\pm2\pi$.
\end{proof}

\begin{remark}[Scale of the mismatch]\label{rem:mismatchscale}
	For each fixed sufficiently small $\eps>0$, an equal-period locked state persists under sufficiently small admissible changes of the natural periods. As $\eps\to0$, however, a free phase drift of order $\eps$ competes at leading order with the impulsive phase correction, and the balance becomes $\nu+H=0$.
\end{remark}

\subsection{Structural stability at fixed coupling}
Structural stability requires the switching construction to be continuous on a neighbourhood of systems, not at a single system. Fix a class of admissible $C^2$ reciprocal systems whose paired-event returns are regular, and let $\mathcal U$ be a relative $C^2$ neighbourhood of the reference system in that class. The admissible class need not be open in the unrestricted space of all impulse maps: exact section preservation, for example, is not preserved by arbitrary $C^2$ perturbations. Assume that, for every system in $\mathcal U$, the emission sections are transverse, the single-emission convention holds with the same $\tau_*$, the return cylinder of Section~\ref{sec:reciprocalglobal} is defined, and its local returns vary continuously with the system in Lipschitz norm, with uniform bi-Lipschitz bounds. The gluing map is a composition of flow, hitting-time and impulse maps in a fixed order, so it varies continuously with the system in the $C^1$ norm: if the systems converge in $C^2$, their gluing maps converge in $C^1$.

\begin{theorem}[Structural stability of reciprocal circle locking]\label{thm:reciprocalstruct}
	Fix a sufficiently small $\eps>0$ and an equal-period reference system with such a neighbourhood $\mathcal U$. Assume that $H(0)\neq0$ and that $H$ has at least one zero, all of its zeros being simple. Then, after shrinking $\mathcal U$, every system in $\mathcal U$ has a nearby Lipschitz invariant circle, normally attracting in the metric sense. The induced circle homeomorphism has the same number and cyclic order of attracting and repelling locked states as the reference one, and is topologically conjugate to it. An admissible perturbation may change the vector fields, the impulse maps, the emission sections and the natural periods, provided the stated uniform return regularity is retained.
\end{theorem}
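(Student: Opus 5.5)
The plan is to work entirely at the fixed coupling strength $\eps$ and treat the reference return $\mathcal R_\eps$ of Theorem~\ref{thm:globalreciprocal} as an object that moves continuously in Lipschitz norm as the system varies in $\mathcal U$. The argument has three steps: persistence of the invariant circle, persistence of each fixed point together with its type, and conjugacy.

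\emph{Step 1: the invariant circle.} For a system $S\in\mathcal U$, the return cylinder coordinates $\mathcal X_\eps^S$ are built from the gluing maps $\sigma_\eps^S$. These vary continuously in $C^1$, so the coordinate changes are $C^1$-close to the reference ones. By the standing assumption the local returns are Lipschitz-close, with uniform bi-Lipschitz bounds. Hence in a common product tube
\[
\|\mathcal R^S-\mathcal R_\eps\|_\infty+\Lip(\mathcal R^S-\mathcal R_\eps)\le \beta(S),\qquad \beta(S)\to0 .
\]
I would then rerun the graph transform of Theorem~\ref{thm:globalcircle}, now on the class of graphs $u$ with $\|u-a_\eps\|_\infty\le C_1'\beta$ and $\Lip(u)\le C_2|\eps|+C_2'\beta$. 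The estimates~\eqref{eq:graphestimates}, \eqref{eq:graphangularnormal} and~\eqref{eq:graphcontraction} hold with $|\eps|$-dependent constants perturbed by $O(\beta)$. So for $\beta$ small the strict inequalities $\lambda_0<1$ and $m_\eps<1$ survive, and Banach's theorem gives an invariant Lipschitz graph $a^S$ near $a_\eps$ that is normally attracting in the metric sense. The induced circle map $q^S(\vartheta)=Q^S(\vartheta,a^S(\vartheta))$ is an orientation-preserving bi-Lipschitz homeomorphism, and $\|q^S-q_\eps\|_\infty\to0$.

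\emph{Step 2: fixed points away from simultaneous emission.} By Theorem~\ref{thm:globalreciprocal}, the reference map $q_\eps$ is Morse--Smale. Its fixed points lie in chart A, near the zeros of $H$, and are hyperbolic fixed points of the smooth map $\mathcal P_\eps$ (Corollary~\ref{cor:period}). I would choose small disjoint closed arcs $I_j$ around them, inside chart A. On $\Sone\setminus\bigcup_j I_j$ the reference displacement $\widetilde q_\eps-\id$ is bounded away from $0$ and from $\pm2\pi$ by compactness. By $C^0$ closeness, $q^S$ then has no fixed point there, and the displacement keeps the same sign on each complementary arc.

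Inside each $I_j$ the return is the fixed-order map, which depends $C^1$-continuously on $S$ because it is a composition of $C^2$ flows, hitting times and impulses. The hyperbolic fixed point of $\mathcal P_\eps$ therefore continues uniquely, with the same multipliers up to small error. By Step 1 it lies on the perturbed circle. Its relative multiplier stays on the same side of $1$, so on $I_j$ the displacement of $q^S$ has a single topologically transverse zero of the same type. The point that needs care is uniqueness of the fixed point in $I_j$ from hyperbolicity alone. I would handle it by noting that on $I_j$ the circle restricted to chart A is a $C^1$ graph, since the fixed-order return is smooth and the invariant circle there is $C^1$-close by normal hyperbolicity in that chart. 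Then $q^S$ is $C^1$-close to $q_\eps$ on $I_j$, so its displacement is strictly monotone there.

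\emph{Step 3: conjugacy.} The previous steps give the same number of fixed points as the reference, in the same cyclic order and with the same types, and every other orbit moves monotonically between consecutive fixed points. So $q^S$ is topologically Morse--Smale, and Lemma~\ref{lem:circleconjugacy} yields the conjugacy. Changes of the natural periods are covered automatically, because $\eps$ is fixed and the only thing used is Lipschitz closeness of the returns.

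\emph{Main obstacle.} I expect the hardest part to be Step 1 near switching. There the returns are only Lipschitz, and closeness of the chart systems must be combined with Lipschitz closeness in the B chart to obtain a uniform graph-transform estimate. My approach would be to copy the patching and Lebesgue-number argument from the proof of Lemma~\ref{lem:returncylinder}, with $\beta$ in place of $|\eps|$.
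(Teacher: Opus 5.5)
Your Steps~1 and~3, and the use of $C^0$ closeness to exclude fixed points off the arcs $I_j$, follow the paper's proof. The gap is at the point you flag yourself: uniqueness inside $I_j$. You claim that the invariant circle over $I_j$ is a $C^1$ graph, $C^1$-close to the reference one ``by normal hyperbolicity in that chart''. This is false in general.

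Chart~A is not invariant, and the circle is determined by the global return. Near an upward zero the circle does coincide with the local unstable manifold of a hyperbolic saddle of the smooth map $\mathcal P_\eps$, so it is smooth there. Near a downward zero, however, the circle is made of forward images of the arc that crosses the switching set $\{u=0\}$ at some point $z_0$. Definition~\ref{def:pairedregular} expressly allows the one-sided derivatives of the return to differ across that set. The image of the circle therefore typically has a corner at $\mathcal R_\eps(z_0)$. Fixed-order returns are local diffeomorphisms and map corners to corners, so corners persist at every $\mathcal R_\eps^n(z_0)$ and accumulate at the attracting fixed point. Locally invariant curves through a sink, tangent to its weakest direction, are not unique. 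Hence no local normal-hyperbolicity argument in chart~A decides which of them is the global circle, or how smooth it is. Consequently neither $q_\eps$ nor the perturbed circle map need be $C^1$ on $I_j$, and the $C^1$-closeness on which your monotonicity rests is unavailable.

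What you actually need is strict monotonicity of the perturbed displacement on $I_j$, and Lipschitz estimates suffice; this is the paper's route. By Lemma~\ref{lem:globalphaseexpansion}, $\Lip(r_\eps|_{J_*})=o(\eps)$. So on $I_j$ the reference displacement has difference quotients of one sign and modulus at least $c\eps/2$.

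Let $\widehat a$ and $a$ be the perturbed and reference invariant graphs, and write the difference of displacements as $\bigl[Q^S(\vartheta,\widehat a(\vartheta))-Q(\vartheta,\widehat a(\vartheta))\bigr]+\bigl[Q(\vartheta,\widehat a(\vartheta))-Q(\vartheta,a(\vartheta))\bigr]$.
\begin{itemize}
\item The first bracket has Lipschitz constant at most $\Lip(Q^S-Q)\bigl(1+\Lip\widehat a\bigr)$, which tends to zero as $\mathcal U$ shrinks.
\item The second bracket has Lipschitz constant at most $\Lip_a(Q)\bigl(\Lip\widehat a+\Lip a\bigr)$, plus a term bounded by a constant times $\|\widehat a-a\|_\infty$. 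The first part is $O(\eps^2)$ by~\eqref{eq:Eaderivative} and the graph class of your Step~1. The second part exists because $Q$ is $C^2$ on chart~A, and it tends to zero.
\end{itemize}

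At the fixed small $\eps$ the total is below $c\eps/2$, which gives monotonicity. Uniqueness follows, and the signs at the endpoints of $I_j$ give the type.

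Alternatively, you can get uniqueness from the three-dimensional fixed-order return alone, on a fixed box $I_j\times B_r(0)$. Use the transverse implicit-function reduction and the monotone scalar equation from the proof of Theorem~\ref{thm:reciprocallocal}. This avoids any question about the regularity of the circle.
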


\begin{proof}
The continuity of the local returns in Lipschitz norm and of the gluing map in the $C^1$ norm, with the uniform bounds, keeps the graph transform a uniform contraction, so the invariant circle persists. Let $\mathcal R$ be the return of a perturbed system, $\mathcal C$ its invariant circle, and $z$ a fixed point of $\mathcal R$ in the common tube. Metric normal attraction gives
	\[
	\operatorname{dist}(z,\mathcal C)
	=\operatorname{dist}(\mathcal R^n z,\mathcal C)
	\le C_*\lambda^n\operatorname{dist}(z,\mathcal C)
	\]
	for every $n$, hence $z\in\mathcal C$: every nearby fixed point is a fixed point of the circle map.
	
	For the reference system, the displacement has constant sign near simultaneous emission, because $H(0)\neq0$. Each locked state lies away from simultaneous emission and is a hyperbolic fixed point of a smooth fixed-order return in dimension three. After shrinking $\mathcal U$, the hyperbolic continuation theorem therefore gives exactly one fixed point of the full return near each reference locked state, with the same stability type, and by the preceding paragraph it lies on the perturbed invariant circle.
	
	It remains to exclude other fixed points. The graph transforms depend continuously on the system as operators in the sup norm, and so do their fixed graphs; hence the displacements of the circle maps vary continuously in $C^0$. On a fixed-order interval around a zero of $H$, the two Lipschitz invariant graphs $a$ and $\widehat a$ do not spoil monotonicity: since $\Lip_a(Q)=O(\eps)$ and both graphs have Lipschitz constant $O(\eps)$, their contribution to the Lipschitz constant of the difference of the displacements is at most
	\[
	\Lip_a(Q)\bigl(\Lip(\widehat a)+\Lip(a)\bigr)=O(\eps^2),
	\]
	well below the slope $c\eps$ of the reference displacement at the fixed small coupling. Outside the chosen neighbourhoods of the fixed points, the reference displacement is bounded away from zero, and $C^0$ closeness excludes new fixed points. The number, type and cyclic order of the fixed points are therefore unchanged, and Lemma~\ref{lem:circleconjugacy} gives the conjugacy.
\end{proof}

\begin{theorem}[Structural stability of stroboscopic locking]\label{thm:nearbyperiods}
	Fix a sufficiently small coupling strength $\eps>0$, and suppose that the stroboscopic map $P_\eps$ of Theorem~\ref{thm:strobe} has a normally attracting invariant circle on which the induced circle diffeomorphism is Morse--Smale, with a nonempty finite set of hyperbolic fixed points. Then every sufficiently small $C^1$ perturbation of $P_\eps$ has a nearby normally attracting invariant circle, and the induced circle diffeomorphism is topologically conjugate to the original one.
\end{theorem}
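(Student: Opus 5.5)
The plan has two parts. First, persistence of the invariant circle comes from the theory of normally hyperbolic invariant manifolds \cite{HirschPughShub1977}. Second, the conjugacy is read off from hyperbolic continuation of the fixed points together with Lemma~\ref{lem:circleconjugacy}.

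\emph{Step 1: persistence of the circle.} I take the normally attracting invariant circle $\Gamma_\eps$ of $P_\eps$ as given. Normal attraction means the normal bundle is uniformly contracted at a rate $\lambda<1$. Along the circle, the induced diffeomorphism $q=P_\eps|_{\Gamma_\eps}$ has $Dq$ and $Dq^{-1}$ bounded by $1+O(\eps)$, because its lift is $\theta+\eps h(\theta)+o_{C^1}(\eps)$ by Theorem~\ref{thm:strobe}. For fixed small $\eps$ this gives the $r=1$ normal hyperbolicity gap: the tangential expansion and contraction are dominated by the normal contraction. The Hirsch--Pugh--Shub persistence theorem then applies. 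Every $C^1$-small perturbation $\widehat P$ has a unique nearby $C^1$ invariant circle $\widehat\Gamma$, normally attracting and $C^1$-close to $\Gamma_\eps$. Concretely, I would write $\widehat\Gamma$ as a graph over $\Gamma_\eps$ in a tubular chart and obtain it from the graph transform, as in the proof of Theorem~\ref{thm:globalcircle}. The $C^1$ closeness of $\widehat P$ to $P_\eps$ makes the graph transform a contraction on $C^1$ graphs of small norm. Pulling back by the graph projection, $\widehat q:=\widehat P|_{\widehat\Gamma}$ becomes a circle diffeomorphism that is $C^1$-close to $q$.

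\emph{Step 2: fixed points.} Each fixed point $\theta_j$ of $q$ is hyperbolic for $q$, so $q'(\theta_j)\ne1$. It is also hyperbolic as a fixed point of $P_\eps$ in the plane, since its normal multiplier is less than $1$. By hyperbolic continuation it persists as a unique fixed point $\widehat z_j$ of $\widehat P$ in a small neighbourhood, with the same type. Any fixed point of $\widehat P$ near the circle lies on $\widehat\Gamma$, by the normal-contraction argument used in the proof of Theorem~\ref{thm:reciprocalstruct}. Hence $\widehat z_j\in\widehat\Gamma$, and it is a hyperbolic fixed point of $\widehat q$ of the same type as $\theta_j$.

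To rule out new fixed points, choose disjoint neighbourhoods $I_j$ of the $\theta_j$ on which $|q'-1|\ge c>0$. On the compact complement, the displacement $|\widetilde q(\theta)-\theta|$ is bounded below. By $C^1$ closeness of $\widehat q$ to $q$ (transported to a common circle), $\widehat q$ has exactly one fixed point in each $I_j$ and none elsewhere. This step is where I expect the main care to be needed: $\widehat q$ and $q$ live on different circles, so one must compare them through the graph projection and check that this identification is itself $C^1$-close to the identity. The $C^1$ graph estimate from Step 1 supplies that.

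\emph{Step 3: conjugacy.} Since $\widehat q$ is an orientation-preserving diffeomorphism $C^1$-close to $q$, its fixed points appear in the same cyclic order, are all hyperbolic, and attracting and repelling points alternate. A circle diffeomorphism with finitely many fixed points, all hyperbolic, has every other orbit moving monotonically between adjacent fixed points, with no other nonwandering points. It is therefore topologically Morse--Smale in the sense of Definition~\ref{def:topMS}. Lemma~\ref{lem:circleconjugacy} then gives a topological conjugacy between $q$ and $\widehat q$. Composing with the graph identification $\Gamma_\eps\to\widehat\Gamma$ yields the conjugacy of the induced circle diffeomorphisms claimed in the statement.
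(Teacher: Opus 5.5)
Your proposal is correct and follows the paper's route: Hirsch--Pugh--Shub persistence of the normally attracting circle, $C^1$-closeness of the induced circle diffeomorphisms, and then a topological conjugacy. Where the paper simply cites structural stability of Morse--Smale circle diffeomorphisms, you prove it inline. You continue the hyperbolic fixed points, exclude new ones by $C^1$ closeness, and apply Lemma~\ref{lem:circleconjugacy}, as in the proof of Theorem~\ref{thm:reciprocalstruct}.
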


\begin{proof}
	Normal hyperbolicity gives the perturbed invariant circle and its stable foliation. The induced circle diffeomorphism is $C^1$-close to the original one, and the structural stability of Morse--Smale circle diffeomorphisms gives the conjugacy.
\end{proof}

\section{Sharpness of the hypotheses and a numerical example}\label{sec:examples}

This section treats the degenerate balance $H(0)=0$, gives three analytic examples that delimit the regularity hypothesis of Theorem~\ref{thm:globalreciprocal}, and illustrates the period-mismatch results numerically.

\subsection{Exceptional balance at simultaneous emission}\label{sec:exceptional}

Theorem~\ref{thm:globalreciprocal} assumes $H(0)\neq0$. Here we record what changes when
\[
H(0)=h_2(0)-h_1(0)=0,
\]
that is, when at simultaneous emission the two impulses change the two asymptotic phases by the same amount to first order. Neither phase response need vanish, and there are two cases.

If $h_1(0)=h_2(0)\neq0$, both oscillators are advanced, or both delayed, by the same amount: to first order their common phase changes and their phase difference does not. A locked state may exist near $\psi=0$, but its position and stability depend on the paired-event return on each side of the switching set. The first-order balance no longer decides, because its zero lies exactly where the order of the emissions changes.

If $h_1(0)=h_2(0)=0$, each impulse field is tangent to the isochron at the emission point, and the two first-order phase shifts vanish separately. Since $H'(0)=h_2'(0)+h_1'(0)$, a nonzero $H'(0)$ still makes the first-order displacement restoring or destabilising on the two smooth sides. Whether there is a fixed point at $\psi=0$, or near it, can depend on the second-order terms of the paired-event transition.

In either case, suppose that near $\psi=0$ the displacement of the circle map is given on the two sides of the switching point by
\[
D_\eps^\pm(\psi)
=\eps H(\psi)+r_\eps^\pm(\psi),
\qquad
\|r_\eps^\pm\|_{C^0}=O(\eps^2).
\]
If the paired-event transition is regular, so that the invariant circle of Theorem~\ref{thm:globalcircle} exists, and the displacement has a zero $\psi_\eps=O(\eps)$ at which it changes sign from positive to negative, the corresponding nearly simultaneous locked state is attracting on the circle; if the sign changes from negative to positive, it is repelling. In the repelling case every nearby orbit other than the fixed point leaves the neighbourhood of simultaneous emission and enters a region where the order of the emissions is fixed and the local theory applies. The exceptional case therefore affects the dynamics only near simultaneous emission.

The condition $H(0)=0$ has codimension one unless a symmetry imposes it, as happens for two identical oscillators with identical impulses, for which $H(\psi)=h(\psi)-h(-\psi)$ is odd. It is excluded by the hypothesis $H(0)\neq0$ of Theorem~\ref{thm:globalreciprocal}.

\subsection{Three analytic examples}\label{sec:analytic}
\noindent\textbf{A. Failure of continuity without section preservation.}
On annuli about $r_i=1$ let $\dot r_i=-\lambda_i(r_i-1)$, $\dot\theta_i=1$ ($\lambda_i>0$), with emission isochrons $\Sigma_i=\{\theta_i=0\}$ and impulses
\[
K_1^\eps(r,\theta)=(r,\theta+\eps(a+b\sin\theta)),\qquad
K_2^\eps(r,\theta)=(r,\theta+c\eps),
\qquad a<0,\ c<0,\ bc\neq0;
\]
the signs of $a$ and $c$ ensure that no kick carries a receiver that has not yet emitted forward across its section, while a receiver that has just emitted may be carried back across it, the resulting recrossing being ignored (Definition~\ref{def:singleemission}). Then $H(\psi)=c-a+b\sin\psi$, which has $H(0)\neq0$ and exactly two simple zeros if $0<|c-a|<|b|$. With one recorded emission per oscillator per revolution, approach natural simultaneity from initial phases $(0,-d)$ and $(-d,0)$, $d\downarrow0$. At the common observation time $t=\pi$ the two post-episode limits are $L_{12}=(\pi+\eps(a+b\sin(-c\eps)),\pi+c\eps)$ and $L_{21}=(\pi+a\eps,\pi+c\eps)$, so
\[
L_{12}-L_{21}=\bigl(\eps b\sin(-c\eps),0\bigr)=\bigl(-bc\eps^2+O(\eps^4),0\bigr)\neq0
\]
for all sufficiently small $\eps>0$. For $(a,b,c)=(-1,3,-2)$ this gives $H(\psi)=-1+3\sin\psi$, $H(0)=-1$, $L_{12}=(\pi-\eps+3\eps\sin(2\eps),\pi-2\eps)$ and $L_{21}=(\pi-\eps,\pi-2\eps)$. Thus the paired-event transition is discontinuous, even with isochronal emission, on an open set of triples $(a,b,c)$; in particular, condition~\eqref{eq:pairedReg} is not generic within this family. This disproves automatic global regularity, not every possible global locking conclusion.

\medskip\noindent\textbf{B. Regular return with aligned emission sections.}
Let $\dot r_i=-\lambda_i(r_i-1)$, $\dot\theta_i=1+c_i(r_i-1)$ ($\lambda_i,c_i>0$). Their asymptotic phases are $\Theta_i=\theta_i+(c_i/\lambda_i)(r_i-1)$. Choose $\Sigma_i=\{\theta_i=0\}$ and
\[
K_i^\eps(r,\theta)=(r+\eps f_i(\theta),\theta),\qquad
f_1=\lambda_1/c_1,\quad f_2=(\lambda_2/c_2)(\tfrac12+\sin\theta).
\]
The emission section is preserved exactly, since $\theta_i$ is unchanged. Nonetheless $h_1=1$, $h_2(\theta)=\tfrac12+\sin\theta$, and
\[
H(\psi)=\sin\psi-\tfrac12,\qquad H(0)=-\tfrac12,\qquad
\psi_u=\pi/6,\quad\psi_s=5\pi/6.
\]
Both zeros are simple. Proposition~\ref{prop:sufficientregularity} supplies the regularity required by Theorem~\ref{thm:globalreciprocal}, which therefore applies. In particular, alignment of an emission section does not entail $H(0)=0$.

\medskip\noindent\textbf{C. Regularity without preservation of emission sections.}
With the oscillators and sections of Example~A, fix $0<d<1/20$ and a smooth $\eta:\Sone\to[0,1]$ with $\eta(\theta)=0$ for $\operatorname{dist}(\theta,0)\le d$ and $\eta(\theta)=1$ for $\operatorname{dist}(\theta,0)\ge2d$, and let $K_i^\eps$ be the time-$\eps$ flows of
\[
B_1=2\,\partial_{\theta_1},\qquad
B_2=\bigl(\tfrac{11}{5}+\eta(\theta_2)\sin\theta_2\bigr)\,\partial_{\theta_2}.
\]
Both fields advance the phase and their flows are diffeomorphisms for either sign of $\eps$; on $|\theta|<d-3|\eps|$ they are the translations $\theta\mapsto\theta+2\eps$ and $\theta\mapsto\theta+\tfrac{11}{5}\eps$, so neither emission section is invariant for $\eps\neq0$.

Nevertheless, the paired-event transition is regular. Take the incoming and outgoing sections $\chi=\mp\sigma$, $\chi=(\theta_1+\theta_2)/2$, and restrict $\psi=\theta_2-\theta_1$ by $|\psi|+6|\eps|<d$. Each receiver is then kicked where the impulses are these translations; a forward kick across a section is recorded as an emission at that instant and a backward recrossing, possible for $\eps<0$, is ignored (Definition~\ref{def:singleemission}), so each oscillator receives exactly one kick. Since the translations commute with the free flow, the outgoing state is given by the same formula for both event orders and both signs of $\eps$: the episode lasts $t_\eps=2\sigma-\tfrac{21}{10}\eps$ and, with $a_i=r_i-1$,
\[
E_\eps(\psi,a_1,a_2)=\bigl(\psi+\tfrac{\eps}{5},\,e^{-\lambda_1t_\eps}a_1,\,e^{-\lambda_2t_\eps}a_2\bigr).
\]
This map is smooth and $e^{-\lambda_it_\eps}-e^{-2\sigma\lambda_i}=O(|\eps|)$, so Definition~\ref{def:pairedregular} holds on this domain, although neither condition in~\eqref{eq:sufficientregularity} does.

Finally $\Theta_i=\theta_i$, so $H(\psi)=\tfrac15+\eta(\psi)\sin\psi$ and $H(0)=\tfrac15$. Where $\eta<1$, $|\eta\sin\psi|<\sin 2d<\tfrac15$; since $\arcsin\tfrac15>2d$, $H$ has exactly two simple zeros, the restoring $\psi_s=\pi+\arcsin\tfrac15$ and the destabilising $\psi_u=2\pi-\arcsin\tfrac15$. Thus Theorem~\ref{thm:globalreciprocal} applies, and regularity of the paired-event transition is strictly more general than preservation of the emission sections.

\subsection{Numerical example with a small period mismatch}\label{sec:numerics}

We illustrate the theory on two oscillators of different kinds, coupled by translations. The coupling strength is $\eps$, an independent parameter $\delta$ changes the natural period of the second oscillator, and the computations follow the path $\delta=\eps$. No numerical value is used in the proofs.

\paragraph{Model.}
A radial cubic normal-form oscillator, with coordinates $(u_1,v_1)$, is coupled to a Van der Pol oscillator, with coordinates $(u_2,v_2)$:
\[
\begin{aligned}
	\dot u_1&=k(1-u_1^2-v_1^2)u_1-\omega_0v_1, &\qquad \dot u_2&=c_\delta v_2,\\
	\dot v_1&=\omega_0u_1+k(1-u_1^2-v_1^2)v_1, &\qquad \dot v_2&=c_\delta\bigl(\mu(1-u_2^2)v_2-u_2\bigr),
\end{aligned}
\]
with $k=2/5$, $\mu=3$ and $c_\delta=1+\delta\nu/(2\pi)$. Here $T_0\simeq8.8590954993$ is the period of the Van der Pol cycle for $c_\delta=1$, and $\omega_0=2\pi/T_0\simeq0.7092355317$, so that the radial oscillator also has period $T_0$. The factor $c_\delta$ changes only the time parametrisation of the Van der Pol field, and oscillator~1 does not depend on $\delta$. Hence $\Theta_i^\delta=\Theta_i$ and $h_i^\delta=h_i$, the uniform phase hypotheses~\eqref{eq:phaseuniformdelta}--\eqref{eq:huniformdelta} hold trivially, and
\[
\Delta_\delta=(c_\delta\omega_0-\omega_0)T_0=\delta\nu .
\]
We take $\nu=1/20$ and $\delta=\eps$, so that $\Delta_\delta=\eps\nu$ exactly.

\paragraph{Sections and impulses.}
Let
\[
\begin{aligned}
	\Sigma_1&=\{v_1=0,\ u_1>0\}\cap V_1,&\qquad
	K_1^\eps(u_1,v_1)&=(u_1-\beta\eps,v_1),\\
	\Sigma_2&=\{u_2=0,\ v_2>0\}\cap V_2,&\qquad
	K_2^\eps(u_2,v_2)&=(u_2,v_2-\eps),
\end{aligned}
\]
with $\beta=3/20$. These have the form of the reciprocal translations~\eqref{eq:openreciprocal}, with $\eta=\beta$, $d_1=-e_1$ and $d_2=-e_2$; the numerical value $\beta=3/20$ is not asserted to satisfy the unspecified small-ratio bound of Theorem~\ref{thm:opencouplings}. Each impulse leaves unchanged the function defining the section of its receiver, $g_1=v_1$ or $g_2=u_2$, so~\eqref{eq:sufficientregularity} holds, with $\alpha_i^\eps\equiv1$ in~\eqref{eq:sectionfactor}, and the paired-event transition is regular by Proposition~\ref{prop:sufficientregularity}, uniformly in $\delta$.

\paragraph{Phase balance.}
Since $\Theta_1=\arg(u_1+iv_1)$, we have $h_1(\theta)=\beta\sin\theta$. The phase gradient $Z_2=(Z_{2,u},Z_{2,v})$ of the Van der Pol cycle, computed from the periodic adjoint equation with the normalisation $Z_2\cdot F_2=\omega_0$, gives $h_2=-Z_{2,v}$, so that
\[
H_\nu(\psi)=\nu-Z_{2,v}(\psi)+\beta\sin\psi .
\]
The effective field is $W=-e_2+\omega_0^{-1}\beta\sin\psi\,F_2$, and it does not vanish: a zero would require $v_2=0$ and $\abs{u_2}\ge\omega_0/\beta>4$, whereas the rigorous Van der Pol amplitude bound $\abs{u_2}<2.3439<\omega_0/\beta$, valid for every $\mu\neq0$~\cite{Odani2000}, holds along the entire limit cycle. Deforming $\beta$ to $0$ keeps $W\neq0$, so $\wind_{\Gamma_2}(W)=0$ and $m=\pm1$ in~\eqref{eq:windingnumber}. The winding calculation applies to $H$, whereas the following numerically determined zeros belong to the shifted balance $H_\nu$ (Figure~\ref{fig:H}):
\[
\begin{aligned}
	\psi_s&\simeq2.8392984432,&\qquad H_\nu'(\psi_s)&\simeq-1.1593532404,\\
	\psi_u&\simeq5.8988225566,&\qquad H_\nu'(\psi_u)&\simeq\phantom{-}1.2647308033.
\end{aligned}
\]
The first is downward (restoring) and the second upward (destabilising). No additional zeros were detected numerically, and $H_\nu(0)\simeq0.347\neq0$. This is a numerical zero search, not a certified exhaustive count. The simple restoring zero illustrates the local conclusion of Theorem~\ref{thm:smallmismatch}; the global Morse--Smale conclusion for these particular numerical parameters additionally requires verification that all zeros of $H_\nu$ are simple.

\begin{figure}[htbp]
	\centering
	\includegraphics[width=.83\linewidth]{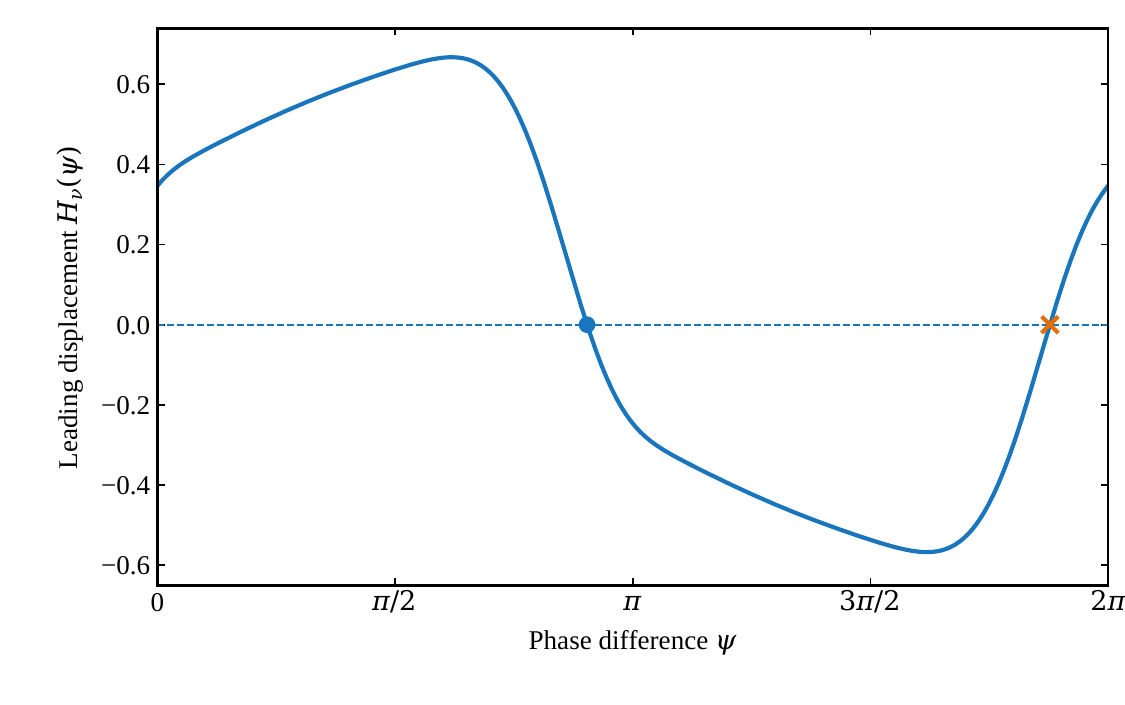}
	\caption{The leading phase displacement $H_\nu(\psi)=\nu-Z_{2,v}(\psi)+\beta\sin\psi$ of the example, with its restoring zero $\psi_s$ (filled circle) and destabilising zero $\psi_u$ (cross), both away from simultaneous emission, $\psi=0$.}
	\label{fig:H}
\end{figure}

\paragraph{Complete return.}
The Poincar\'e return is computed from the two planar equations, with detection of the crossings of the emission sections; no scalar phase recurrence is used. Table~\ref{tab:locking} lists its attracting fixed point for four values of $\eps$, together with the first-order predictions of Theorem~\ref{thm:smallmismatch} and Corollary~\ref{cor:mismatchperiod}. The period of the locked state differs from both natural periods (at $\eps=0.05$ these are $T_0$ and $T_0/c_\eps\simeq8.8556$), $\lambda_{\rm rel}<1$, and the errors of the predictions are proportional to $\eps^2$. Evaluating the phase component of the complete return on the solved transverse graph, as in Proposition~\ref{prop:mismatchreturn}, gives a displacement that differs from $\eps H_\nu(\psi)$ by a quantity that scales numerically as $\eps^2$, uniformly on compact subsets of $(0,2\pi)$. The integrations use relative tolerance $2\times10^{-10}$ and absolute tolerance $2\times10^{-12}$. The accompanying Python code computes the periodic orbit and adjoint, locates the physical emission events, evaluates the variational return, and compares the numerical outputs with Figure~\ref{fig:H} and Table~\ref{tab:locking} using explicit validation tolerances.

\begin{table}[htbp]
	\centering\small
	\caption{Fixed points of the complete return along $\delta=\eps$, with the first-order predictions $1+\eps H_\nu'(\psi_s)$ for the relative-phase multiplier and $T_0-\eps h_1(-\psi_s)/\omega_0$ for the period of the locked state.}
	\label{tab:locking}
	\begin{tabular}{@{}rrrrrr@{}}\toprule
		$\eps$&Computed phase&Computed $\lambda_{\rm rel}$&Predicted $\lambda_{\rm rel}$&Computed period&Predicted period\\\midrule
		0.0125&2.838102644&0.985616651&0.985508084&8.859886284&8.859882556\\
		0.0250&2.836908450&0.971447765&0.971016169&8.860684513&8.860669612\\
		0.0500&2.834524903&0.943737882&0.942032338&8.862303225&8.862243725\\
		0.1000&2.829777410&0.890725299&0.884064676&8.865629122&8.865391951\\\bottomrule
	\end{tabular}
\end{table}
\FloatBarrier

\section{Discussion}\label{sec:discussion}

Stable reciprocal synchronisation can occur between oscillators with
entirely different equations and cycle geometries.
Theorem~\ref{thm:opencouplings} establishes stable one-to-one locking
for any pair of planar hyperbolic attracting limit cycles with equal
natural periods, through arbitrarily weak reciprocal translation
impulses with generic directions and suitable positive strength ratios.
The resulting periodic motion attracts nearby trajectories of the
full impulsive system, including transverse perturbations.

The mechanism is topological: the phase gradient turns once around
each cycle, forcing a fixed-direction perturbation to produce both
phase advances and delays. The identity
$H=Z_2\cdot W$ in~\eqref{eq:effectiveidentity} expresses the reciprocal
phase balance through contacts between the effective impulse field
and the isochrons. The winding criterion converts this geometry into
lower bounds on the numbers of attracting and unstable locked states.
Thus the existence and multiplicity of synchronised rhythms are
linked to an integer invariant of the interaction geometry.

Theorem~\ref{thm:globalreciprocal} organises the locked states on a
normally attracting Lipschitz invariant circle with topologically
Morse--Smale dynamics. The two-chart return connects the different
emission orders, while the analytic examples in
Section~\ref{sec:analytic} reveal how the geometry of impulses and
emission sections determines the regularity of the global return.
This organisation persists under the admissible perturbations
considered in Section~\ref{sec:nearbyperiods}.

Locking does not require exactly equal natural periods: by
Corollary~\ref{cor:genericmismatch}, it persists over a nonzero range
of mismatches on the scale of the coupling.
The shifted balance $H_\nu=\nu+H$ describes the competition between
free phase drift and interaction-induced phase correction, governing
the adjustment of the locked phase and the loss of locking.
The numerical example in Section~\ref{sec:numerics} illustrates
this mechanism with a radial oscillator and a Van der Pol oscillator,
whose common locked period differs from both natural periods.

These results offer a topological perspective on the pervasiveness
of synchronisation described by Strogatz in
\emph{Sync}~\cite{Strogatz2003}.
Attraction stabilises the oscillations, phase geometry supplies
advances and delays, and reciprocal interaction can organise them
into a stable common rhythm. Oscillators can therefore synchronise
through geometric properties they share despite the differences
in their individual dynamics.

\section*{Declarations}
\addcontentsline{toc}{section}{Declarations}
\paragraph{Funding.}
Jorge Buescu acknowledges partial support from Funda\c{c}\~ao para a Ci\^encia e a Tecnologia through project UID/04561/2025 (\doi{10.54499/UID/04561/2025}). Henrique M. Oliveira acknowledges partial support from FCT/Portugal and the Recovery and Resilience Plan (PRR), through CAMGSD, under projects UID/04459/2025 (\doi{10.54499/UID/04459/2025}) and UID/PRR/04459/2025 (\doi{10.54499/UID/PRR/04459/2025}).

\paragraph{Competing interests.}
The authors declare no competing interests.

\paragraph{Data availability.}
All numerical data used in the figures are generated from the equations
and parameter values stated in Section~\ref{sec:numerics}.

\paragraph{Code availability.}
The complete reproducibility package for the numerical computations
described in Section~\ref{sec:numerics}, including the Python source code,
numerical data, validation outputs, and the files used to generate
Figure~5 and Table~1, is publicly available on Zenodo at
\href{https://doi.org/10.5281/zenodo.23299155}
{doi:10.5281/zenodo.23299155}.

\paragraph{Ethics and consent.}
The work involves no human participants, animal subjects or personal information. Ethics approval and consent to participate or publish are not applicable.

\paragraph{Author contributions.}
Jorge Buescu and Henrique M. Oliveira contributed to the conceptualisation, mathematical analysis and preparation of the manuscript. Henrique M. Oliveira developed the numerical implementation used for the illustrative computations.

\paragraph{AI-assisted preparation.}
AI tools were used during the revision of English text. The authors take full responsibility for the mathematical arguments, numerical results and final manuscript.

\setlength{\bibsep}{0pt}
\bibliographystyle{plain}
\bibliography{HuygensTopologicalTwoClocks}
\end{document}